\def\@seccntformat#1{\@ifundefined{#1@cntformat}%
    {\csname the#1\endcsname\quad}
    {\csname #1@cntformat\endcsname}}
\newcommand{\section@cntformat}{\S\thesection\quad}
\newcommand{\subsection@cntformat}{\S\thesubsection\quad}
\newlist{thmenum}{enumerate}{1}
\setlist[thmenum, 1]{label=(\roman*), ref=\thethm (\roman*)}
\begin{document}



\theoremstyle{plain}
\newtheorem{thm}{Theorem}[subsection]
\newtheorem{lem}[thm]{Lemma}
\newtheorem{cor}[thm]{Corollary}
\newtheorem{prop}[thm]{Proposition}
\newtheorem{conj}[thm]{Conjecture}
\newtheorem{exmp}[thm]{Example}

\theoremstyle{definition}
\newtheorem{defn}[thm]{Definition}
\newtheorem{exer}[thm]{Exercise}
\newtheorem{rmk}[thm]{Remark}
\newtheorem*{notation}{Notation}

\makeatletter
\@addtoreset{thm}{section}
\makeatother

\newcommand{\cc}{\mathbb{C}}
\newcommand{\rr}{\mathbb{R}}
\newcommand{\dd}{\mathbb{D}}
\newcommand{\cldd}{\overline{\mathbb{D}}}
\newcommand{\epsi}{\varepsilon}
\newcommand{\nn}{\mathbb{N}}
\newcommand{\zz}{\mathbb{Z}}
\newcommand{\fy}{\varphi}
\newcommand{\sign}{\text{sign}}
\newcommand{\bfs}{\textbf{S}}
\newcommand{\triv}{\textbf{1}}
\newcommand{\bb}{\textbf{B}}
\newcommand{\alga}{\mathcal{A}}
\newcommand{\hilb}{\mathcal{H}}
\newcommand{\inv}{\mathrm{GL}}
\newcommand{\nil}{\mathrm{Nil}}
\newcommand{\qnil}{\mathrm{QNil}}
\newcommand{\bh}{\mathcal{B(H)}}
\newcommand{\qh}{\mathcal{Q(H)}}
\newcommand{\ol}{\overline}
\newcommand{\mc}{\mathcal}
\newcommand{\dist}{\mathrm{dist}}
\newcommand{\nor}{\mathrm{Nor}}
\newcommand{\mm}{\mathbb{M}}
\newcommand{\au}{\sim_{au}}
\newcommand{\sorb}{\mathcal{S}}
\newcommand{\alg}{Alg}
\newcommand{\bqt}{\mathrm{BQT}}
\newcommand{\anti}{\mathrm{Anti}}
\newcommand{\rad}{Rad(\mc{A})}
\newcommand{\rank}{\mathrm{rank}}
\newcommand{\ran}{\mathrm{ran}}
\newcommand\scalemath[2]{\scalebox{#1}} 
\renewcommand{\qedsymbol}{$\blacksquare$}

\begin{abstract}
A subalgebra $\mc{A}$ of $\mm_n(\cc)$ is said to be \textit{idempotent compressible} if $E\mc{A}E$ is an algebra for all idempotents $E\in\mm_n(\cc)$. Likewise, $\mc{A}$ is said to be \textit{projection compressible} if $P\mc{A}P$ is an algebra for all orthogonal projections $P\in\mm_n(\cc)$. In this paper, a case-by-case analysis is used to classify the unital projection compressible subalgebras of $\mm_n(\cc)$, $n\geq 4$, up to transposition and unitary equivalence. It is observed that every algebra shown to admit the projection compression property is, in fact, idempotent compressible. We therefore extend the findings of \cite{CMR1} in the setting of $\mm_3(\cc)$, proving that the two notions of compressibility agree for all unital matrix algebras. 
\end{abstract}

\title{Matrix Algebras with a Certain Compression Property II}

\thanks{Research supported in part by NSERC (Canada)}
\author[Z. Cramer]{{Zachary Cramer}}

\newcommand{\Addresses}{{
  \bigskip
  \footnotesize

  Zachary~Cramer, \textsc{Faculty of Mathematics, University of Waterloo,
    Waterloo, Ontario, N2L 3G1}\par\nopagebreak
  \textit{E-mail address}: \texttt{zcramer@uwaterloo.ca}
}}

\date{\today}
\subjclass[2010]{15A30, 46H20} 
\keywords{Compression, Projection Compressibility, Idempotent Compressibility, Algebraic Corners}
\maketitle

\section{Introduction}\label{intro}

	Let $\mm_n=\mm_n(\cc)$ denote the algebra of $n\times n$ matrices with complex entries. In \cite{CMR1}, the notions of idempotent compressibility and projection compressibility were defined for subalgebras of $\mm_n$. In particular, a subalgebra $\mc{A}$ of $\mm_n$ was said to be \textit{idempotent compressible} if the corner $E\mc{A}E$ is an algebra for all idempotents $E\in\mm_n$. Analogously, $\mc{A}$ was said to be \textit{projection compressible} if the corner $P\mc{A}P$ is an algebra for all orthogonal projections $P\in\mm_n$. 
	
	It is immediate from the definitions that every idempotent compressible subalgebra of $\mm_n$ is also projection compressible, though the converse is much less clear. When $n\leq 2$, dimension considerations that every algebra is idempotent compressible---hence projection compressible---though this fact does not hold for $n\geq 3$. In \cite{CMR1}, however, it was shown that every unital subalgebra of $\mm_3$ with the projection compression property is in fact, idempotent compressible. Furthermore, a complete description of unital subalgebras of $\mm_3$ that admit these properties was obtained up to transposition and similarity \cite[Theorem~6.0.1]{CMR1}.
	
	The goal of this paper is to extend the results of \cite{CMR1} to higher dimensional settings. Specifically, we wish to obtain a classification of the unital subalgebras of $\mm_n$, $n\geq 4$, that admit the projection compression property, and investigate whether or not this notion agrees with that of idempotent compressibility.  
%

	Several subalgebras of $\mm_n$, $n\geq 4$, are known to exhibit the idempotent compression property. For example, if $\mc{A}$ is the intersection of a left ideal and a right ideal, then $\mc{A}$ is idempotent compressible \cite[Corollary~2.0.11]{CMR1}. Algebras of this form are known as $\mc{LR}$\textit{-algebras}, and are exactly the algebras of the form $\mc{A}=P\mm_nQ$ for some projections $P$ and $Q$ in $\mm_n$ \cite[Corollary 2.0.10]{CMR1}. 
	
	The following example showcases three additional collections of algebras that exhibit the idempotent compression property. 
	\begin{exmp}\cite[Examples 3.1.1, 3.1.3, 3.1.6]{CMR1}\label{exmp:families of compressible algebras} Let $n\geq 4$ be an integer, and let $Q_1$, $Q_2$, and $Q_3$ be projections in $\mm_n$ that sum to $I$. In what follows, all matrices are expressed with respect to the decomposition $\cc^n=\ran(Q_1)\oplus\ran(Q_2)\oplus\ran(Q_3)$.
		\begin{thmenum}
			\item The algebra $$\begin{array}{l}\mathcal{A}=\cc Q_1+\cc Q_3+(Q_1+Q_2)\mm_n(Q_2+Q_3)\vspace{0.2cm}\\
			\left.\right.\hspace{0.27cm}=\left\{\begin{bmatrix}
			\alpha I & M_{12} & M_{13}\\
			0 & M_{22} & M_{23}\\
			0 & 0 & \beta I
			\end{bmatrix}:\alpha,\beta\in\cc,M_{ij}\in Q_i\mm_nQ_j\right\}\end{array}$$
	is idempotent compressible. \label{exmp:families of compressible algebras:1}\bigskip
			
			\item If $\rank (Q_1)=\rank (Q_2)=1$, then the algebra $$\begin{array}{l}\mathcal{A}=\cc Q_1+\cc Q_2+\cc Q_3+(Q_1+Q_2)\mm_nQ_3\vspace{0.2cm}\\
			\left.\right.\hspace{0.27cm}=\left\{\begin{bmatrix}
			\alpha & 0 & M_{13}\\
			0 & \beta & M_{23}\\
			0 & 0 & \gamma I 
			\end{bmatrix}:\alpha,\beta,\gamma\in\cc,M_{ij}\in Q_i\mm_nQ_j\right\}\end{array}$$ is idempotent compressible.\label{exmp:families of compressible algebras:2}\smallskip
			
			\item If $\rank (Q_1)=\rank (Q_2)=1$, then the algebra $$\begin{array}{l}\mathcal{A}=\cc(Q_1+Q_2)+Q_1\mm_nQ_2+(Q_1+Q_2)\mm_{n}Q_3+\cc Q_3\vspace{0.2cm}\\
			\left.\right.\hspace{0.27cm}=\left\{\begin{bmatrix}
			\alpha & x & M_{13}\\
			0 & \alpha & M_{23}\\
			0 & 0 & \beta I
			\end{bmatrix}:\alpha,\beta,x\in\cc,M_{ij}\in Q_i\mm_nQ_j\right\}\end{array}$$
		is idempotent compressible.\smallskip\label{exmp:families of compressible algebras:3}
		\end{thmenum}
		
%
%
%
	\end{exmp}	
	Our main result, Theorem~\ref{main result}, states that for every integer $n\geq 4$, the algebras from Example~\ref{exmp:families of compressible algebras}, together with the unitization of $\mc{LR}$-algebras described above, form an exhaustive list of unital projection compressible subalgebras of $\mm_n$ up to transposition and similarity. Since each algebra in this collection is known to be idempotent compressible, it will follow that a unital matrix algebra is projection compressible if and only if it is idempotent compressible.
	
	As in \cite{CMR1}, a case-by-case analysis will be used to obtain the classification of unital projection compressible algebras described above. The requisite results from \cite{LMMRWedderburn} concerning the structure theory for matrix algebras will be reintroduced in \S2. In \S3, we present a necessary condition for projection compressibility (Theorem~\ref{At most one non-scalar corner theorem}) that imposes significant restrictions on the structure of a projection compressible algebra. As we shall see, the algebras that satisfy this condition can be grouped into three distinct types determined by their block upper triangular forms. The unital projection compressible algebras of each type will be classified up to transposition and similarity in sections \S4-6, and ultimately up to transposition and unitary equivalence in \S7.\\

\section[2]{Preliminaries}

We will begin by reintroducing the notation, definitions, and preliminary results from \cite{CMR1} surrounding idempotent and projection compressibility. Additionally, we will present some of the key results from \cite{LMMRWedderburn} concerning the structure theory for matrix algebras.

\begin{notation}
		Given vectors $x,y\in\cc^n$, define $x\otimes y^*:\cc^n\rightarrow \cc^n$ to be the rank-one operator $z\mapsto\langle z,y\rangle x.$\smallskip
	\end{notation}

Observe that if $\mc{A}$ is a subalgebra of $\mm_n$ and $E\in\mm_n$ is an idempotent, then $E\mc{A}E$ is always a linear space. Thus, $E\mc{A}E$ is an algebra if and only if it is multiplicatively closed. By dimension considerations, $E\mc{A}E$ be an algebra for all idempotents $E$ of rank $1$. 

\begin{defn}\cite[Definition 2.0.2]{CMR1}
Given a subset $\mc{A}$ of $\mm_n$, we define the \textit{transpose} and \textit{anti-transpose} of $\mc{A}$ to be $$\begin{array}{ccc}\mc{A}^T\coloneqq \left\{A^T:A\in\mc{A}\right\} & \text{and} & \mc{A}^{aT}\coloneqq \left\{JA^TJ:A\in\mc{A}\right\},\end{array}$$ respectively, where $J$ denotes the anti-diagonal unitary matrix whose $(i,j)$-entry is $\delta_{j,n-i+1}$. We say that two subalgebras $\mc{A}$ and $\mc{B}$ of $\mm_n$ are \textit{transpose similar} (resp. \textit{transpose equivalent)} if $\mc{B}$ is similar (resp. unitarily equivalent) to $\mc{A}$ or $\mc{A}^T$. 
\end{defn}

Since the set of idempotents in $\mm_n$ is closed under transpose similarity, so too is the set of all idempotent compressible subalgebras of $\mm_n$. Likewise, the set of projection compressible subalgebras of $\mm_n$ is closed under transpose equivalence. From this it follows that for a given algebra $\mc{A}$, either $\mc{A}$, $\mc{A}^T$, and $\mc{A}^{aT}$ are all idempotent (resp. projection) compressible, or none of them are. 

Finally, if an algebra $\mc{A}$ is idempotent (resp. projection) compressible, then so too is its unitization $\widetilde{A}=\mc{A}+\cc I$ \cite[Proposition~2.0.6]{CMR1}. The converse, however, is false.

The classification of unital projection compressible subalgebras of $\mm_n$, $n\geq 4$, will require much of structure theory for matrix algebras applied in the analysis from \cite{CMR1}. Thus, it will be important to recall the following.
	\begin{defn}\cite[Definition 9]{LMMRWedderburn}\label{definition of reduced block upper triangular}
A subalgebra $\mathcal{A}$ of $\mm_n$ is said to have a \textit{reduced block upper triangular form} with respect to a decomposition $\cc^n=\mathcal{V}_1\dotplus \mathcal{V}_2\dotplus\cdots\dotplus\mathcal{V}_m$ if 

\begin{itemize}

	\item[(i)]when expressed as a matrix, every $A$ in $\mathcal{A}$ has the form  
$$A=\begin{bmatrix}
A_{11} & A_{12} & A_{13} & \cdots & A_{1m}\\
0 & A_{22} & A_{23} & \cdots & A_{2m}\\
0 & 0 & A_{33} & \cdots & A_{3m}\\
\vdots & \vdots & \vdots & \ddots & \vdots\\
0 & 0 & 0 & \cdots & A_{mm}
\end{bmatrix} $$
with respect to this decomposition, and  \\

	\item[(ii)]for each $i$, the subalgebra $\mathcal{A}_{ii}\coloneqq \{A_{ii}:A\in\mathcal{A}\}$ is irreducible. That is, either $\mathcal{A}_{ii}=\{0\}$ and $\dim\mathcal{V}_i=1$, or $\mathcal{A}_{ii}=\mm_{\dim\mathcal{V}_i}$.

\end{itemize}

\end{defn}

An application of Burnside's Theorem \cite{Burnside} shows that every algebra $\mc{A}$ admits a reduced block upper triangular form with respect to some orthogonal decomposition of $\cc^n$. Moreover, one may verify that  if $S$ is an invertible matrix that is block upper triangular with respect to the same decomposition as that of $\mc{A}$, then $S^{-1}\mc{A}S$ also has a reduced block upper triangular form with respect to this decomposition. 

\begin{thm}\cite[Corollary 14]{LMMRWedderburn}\label{Cor 14 from LMMR}
If a subalgebra $\mathcal{A}$ of $\mm_n$ has a reduced block upper triangular form with respect to a decomposition $\cc^n=\mathcal{V}_1\dotplus\mathcal{V}_2\dotplus\cdots\dotplus\mathcal{V}_m$, then the set $\{1,2,\ldots, m\}$ can be partitioned into disjoint subsets $\Gamma_1,\Gamma_2,\ldots, \Gamma_k$ such that   

\begin{itemize}

	\item[(i)] If $i\in\Gamma_s$ and $\mc{A}_{ii}\neq\{0\}$, then there exists $G^{<i>}$ in $\mathcal{A}$ such that $G_{jj}^{<i>}=I_{\mathcal{V}_j}$ for all $j\in\Gamma_s$, and $G_{jj}^{<i>}=0$ for all $j\notin\Gamma_s$.  \\
	
	\item[(ii)]If $i$ and $j$ belong to the same $\Gamma_s$, then $\dim\mathcal{V}_i=\dim\mathcal{V}_j$, and there is an invertible linear map 
	$S_{ij}:\mathcal{V}_i\rightarrow\mathcal{V}_j$ such that  
	$$A_{ii}=S_{ij}^{-1}A_{jj}S_{ij} $$
	for all $A\in\mathcal{A}$.\\

	\item[(iii)]If $i$ and $j$ do not belong to the same $\Gamma_s$, then  
	$$\left\{(A_{ii},A_{jj}):A\in\mathcal{A}\right\}=\left\{A_{ii}:A\in\mathcal{A}\right\}\times\left\{A_{jj}:A\in\mathcal{A}\right\}.\smallskip$$

\end{itemize}
\end{thm}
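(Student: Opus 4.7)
The plan is to define the partition using the kernels of the diagonal projection homomorphisms, then verify the three conclusions via the two-sided Chinese Remainder Theorem together with the Skolem-Noether theorem. Let $I \coloneqq \{i : \mc{A}_{ii} \neq \{0\}\}$, so $\mc{A}_{ii} = \mm_{\dim\mc{V}_i}$ for each $i \in I$ by the irreducibility condition. The block upper triangular structure makes each diagonal projection $\pi_i : \mc{A} \to \mc{A}_{ii}$, $A \mapsto A_{ii}$, a surjective algebra homomorphism, and since $\mc{A}_{ii}$ is simple, $M_i \coloneqq \ker\pi_i$ is a maximal two-sided ideal of $\mc{A}$. I would declare $i \sim j$ on $I$ whenever $M_i = M_j$, and let the partition $\Gamma_1,\ldots,\Gamma_k$ consist of the resulting equivalence classes together with a singleton class $\{i\}$ for each $i \notin I$.

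For (ii), suppose $i, j \in \Gamma_s \cap I$. Then $\pi_i$ and $\pi_j$ share a kernel and therefore differ by an algebra isomorphism $\varphi : \mc{A}_{ii} \to \mc{A}_{jj}$; this forces $\dim\mc{V}_i = \dim\mc{V}_j$, and the Skolem-Noether theorem realizes $\varphi$ as inner conjugation by an invertible $S_{ij}$, giving the intertwining relation $A_{ii} = S_{ij}^{-1} A_{jj} S_{ij}$ for every $A \in \mc{A}$. The property is vacuous for singleton classes coming from $I^c$. For (iii), if $i$ and $j$ lie in distinct classes and are both in $I$, then $M_i \neq M_j$ are distinct maximal two-sided ideals, hence coprime, and the two-sided CRT yields surjectivity of the product map $\pi_i \times \pi_j : \mc{A} \to \mc{A}_{ii} \times \mc{A}_{jj}$, which is precisely the claimed independence; when one of the indices lies in $I^c$ the corresponding coordinate is forced to $0$ and (iii) is immediate.

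For (i), I would apply the multi-ideal version of the two-sided CRT to the pairwise coprime family $\{M_{i_s}\}_s$, where $i_s$ is a chosen representative of each class $\Gamma_s \subseteq I$, to produce $G^{<i>} \in \mc{A}$ with $\pi_{i_s}(G^{<i>}) = I_{\mc{V}_{i_s}}$ and $\pi_{i_t}(G^{<i>}) = 0$ for $t \neq s$. Property (ii) then propagates these values to every $j \in \Gamma_s \cap I$ via $S_{j i_s}$ (yielding $I_{\mc{V}_j}$) and to every $j \in \Gamma_t \cap I$, $t \neq s$ via $S_{j i_t}$ (yielding $0$), while the vanishing at any $j \notin I$ is automatic since $\mc{A}_{jj} = \{0\}$. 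A small consistency check shows that $\Gamma_s$ cannot simultaneously contain an index in $I$ and one in $I^c$, since otherwise (i) would demand $I_{\mc{V}_j} \in \mc{A}_{jj} = \{0\}$; this is precisely what forces $I^c$-indices into their own singleton classes. The principal technical input underlying the whole argument is the two-sided CRT for a pairwise coprime family of maximal ideals, which in turn reduces to the standard observation that distinct maximal two-sided ideals automatically satisfy $M_i + M_j = \mc{A}$.
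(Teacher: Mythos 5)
The paper does not prove this statement at all: it is imported verbatim from \cite{LMMRWedderburn} (Corollary 14 there), so there is no internal proof to compare against. Judged on its own, your argument is correct and is essentially the standard structure-theoretic route one would expect: each $\pi_i$ is a surjective homomorphism onto the simple algebra $\mc{A}_{ii}$ because $(AB)_{ii}=A_{ii}B_{ii}$ for block upper triangular matrices, so $M_i=\ker\pi_i$ is a maximal two-sided ideal; grouping indices by equality of kernels, Skolem--Noether (in its spatial form, realizing an isomorphism $\mc{L}(\mc{V}_i)\to\mc{L}(\mc{V}_j)$ as conjugation by an invertible $S_{ij}$) gives (ii), and comaximality of distinct maximal ideals plus the two-sided CRT gives (iii). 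The only place where you should add a line is the multi-ideal CRT invoked for (i): since $\mc{A}$ is not assumed unital, pairwise comaximality of the $M_{i_s}$ does not by itself yield $M_{i_t}+\bigcap_{u\neq t}M_{i_u}=\mc{A}$. It does here because each quotient $\mc{A}/M_{i_u}\cong\mm_{d_u}$ is a prime ring, so each $M_{i_u}$ is a prime ideal; if $\bigcap_{u\neq t}M_{i_u}\subseteq M_{i_t}$ then $\prod_{u\neq t}M_{i_u}\subseteq M_{i_t}$ would force some $M_{i_u}\subseteq M_{i_t}$, hence $M_{i_u}=M_{i_t}$ by maximality, a contradiction. (Alternatively, the image of the ideal $\bigcap_{u\neq t}M_{i_u}$ under $\pi_{i_t}$ is an ideal of $\mm_{d_t}$, and one rules out the zero case the same way.) With that observation supplied, your proof is complete; the remaining steps, including the propagation of $G^{<i>}_{jj}$ across a class via the intertwiners $S$ and the handling of the $\mc{A}_{jj}=\{0\}$ blocks as singleton classes, are all sound.
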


Given an algebra $\mathcal{A}$ of the form described in Theorem~\ref{Cor 14 from LMMR}, we say that indices $i$ and $j$ in $\{1,2,\ldots, m\}$ are \textit{linked} if they belong to the same $\Gamma_s$, and \textit{unlinked} otherwise. It is easy to see that if $S$ is an invertible matrix that is block upper triangular with respect to the same decomposition as that of $\mc{A}$, then two indices are linked in $\mc{A}$ if and only if they are linked in $S^{-1}\mc{A}S$.

By \cite[Corollary 28]{LMMRWedderburn}, every subalgebra $\mc{A}$ of $\mm_n$ decomposes as an algebraic direct sum $$\mc{A}=\mc{S}\dotplus\rad,$$ where $\mc{S}$ is a semi-simple subalgebra of $\mc{A}$ and $\rad$ denotes the nil radical of $\mc{A}$. When $\mc{A}$ is in reduced block upper triangular form with respect to some decomposition of $\cc^n$, $\rad$ consists of all elements of $\mc{A}$ that are strictly block upper triangular \cite[Proposition~19]{LMMRWedderburn}.

Theorem~\ref{every algebra is similar to an unhinged algebra} provides a description of the structure of $\mc{S}$ that will be used frequently throughout the classification to come. First, we will require the following definition.

\begin{defn}
Let $\mc{A}$ be a subalgebra of $\mm_n$ in reduced block upper triangular form with respect to some decomposition of $\cc^n$. For $A\in\mc{A}$, define the \textit{block-diagonal} of $A$ to be the matrix $BD(A)$ obtained by replacing the block `off-diagonal' entries of $A$ with zeros. Furthermore, define the \textit{block-diagonal} of $\mc{A}$ to be the algebra 
$$BD(\mc{A})=\left\{BD(A):A\in\mc{A}\right\}.$$
\end{defn}

\begin{thm}\cite[Corollary 30]{LMMRWedderburn}\label{every algebra is similar to an unhinged algebra} If a subalgebra $\mathcal{A}$ of $\mm_n$ has a reduced block upper triangular form with respect to a decomposition of $\cc^n$, then there exists an invertible linear operator $S$ that is block upper triangular with respect to the same decomposition as that of $\mc{A}$, and $S^{-1}\mathcal{A}S$ has an unhinged reduced block upper triangular form with respect to this decomposition.
\end{thm}

If an algebra $\mc{A}$ in reduced block upper triangular form with respect to some decomposition of $\cc^n$ is such that $\mc{A}=BD(\mc{A})\dotplus\rad$, we say that $\mc{A}$ is \textit{unhinged} with respect to this decomposition.

We emphasize that the transformation of an algebra $\mc{A}$ into an unhinged reduced block upper triangular form described in Theorem~\ref{every algebra is similar to an unhinged algebra} can be achieved via application of a block upper triangular similarity, but not, in general, via unitary equivalence. Additionally, we note that if $\mc{A}$ is in reduced block upper triangular form and $BD(\mc{A})=\cc I$, then Theorem~\ref{every algebra is similar to an unhinged algebra} implies that $\mc{A}=\cc I\dotplus\rad$. Thus, $\mc{A}$ is unhinged with respect to any decomposition in which it admits a reduced block upper triangular form.

\section[3]{A Strategy for Classification} In this section we will develop a strategy for characterizing the unital subalgebra of $\mm_n$ that admit the projection compression property. By the comments preceding Theorem~\ref{Cor 14 from LMMR}, we may assume that all algebras under consideration are expressed in reduced block upper triangular form with respect to some orthogonal direct sum decomposition of $\cc^n$.

We begin by presenting a simple structural requirement for a unital subalgebra of $\mm_n$, $n\geq 4$, to admit the projection compression property. This result and its corollaries impose substantial restrictions on the reduced block upper triangular form of a projection compressible algebra.

\begin{thm}\label{At most one non-scalar corner theorem}

Let $n\geq 4$ be an integer, and let $\mc{A}$ be a projection compressible subalgebra of $\mm_n$. Suppose there exist mutually orthogonal projections $P_1$ and $P_2$ in $\mm_n$ such that $\min(\rank(P_1),\rank(P_2))\geq 2$ and $P_2\mc{A}P_1=\{0\}$. Then $P_1\mc{A}P_1=\cc P_1$ or $P_2\mc{A}P_2=\cc P_2$.

\end{thm}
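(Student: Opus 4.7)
The plan is to argue by contradiction: assuming both $P_1\mc{A}P_1 \neq \cc P_1$ and $P_2\mc{A}P_2 \neq \cc P_2$, I will exhibit an explicit rank-three orthogonal projection $E$ whose compression $E\mc{A}E$ is not closed under multiplication. First, I would pass to $(P_1+P_2)\mc{A}(P_1+P_2)$, which is again projection compressible (any projection $Q \leq P_1+P_2$ yields $Q\mc{A}Q$ as its corner in either algebra) and has the same two corners $\mc{A}_i := P_i\mc{A}P_i$; this lets me work inside $\mm_{\rank P_1 + \rank P_2}$ with $P_1 + P_2$ as the identity. Since $\mc{A}$ is unital in the context of the classification, each $\mc{A}_i$ contains $P_i$, so the negation $\mc{A}_i \neq \cc P_i$ means $\mc{A}_i$ contains a non-scalar element.

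For the geometric setup, choose $A \in \mc{A}$ with $A_1 := P_1 A P_1$ non-scalar, and pick a unit vector $u_1 \in \ran P_1$ with $A_1 u_1 \notin \cc u_1$; let $u_2 \in \ran P_1$ be the unit vector proportional to $A_1 u_1 - \langle A_1 u_1, u_1\rangle u_1$, so $\{u_1,u_2\}$ is orthonormal and $\langle A_1 u_1, u_2\rangle \neq 0$. Symmetrically, using $B \in \mc{A}$ with $B_2 := P_2 B P_2$ non-scalar, I would select $u_4 \in \ran P_2$ with $B_2 u_4 \notin \cc u_4$ and take $u_3 \in \ran P_2$ proportional to the component of $B_2 u_4$ orthogonal to $u_4$, so that $\{u_3,u_4\}$ is orthonormal and $\langle B_2 u_4, u_3\rangle \neq 0$. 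Because $\ran P_1 \perp \ran P_2$, the three vectors $\tfrac{1}{\sqrt 2}(u_2+u_4)$, $u_1$, $u_3$ are mutually orthonormal, and I take $E$ to be the orthogonal projection onto their span.

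The crux is that for every $C \in \mc{A}$, the matrix of $ECE$ in the ordered basis $\bigl\{\tfrac{1}{\sqrt 2}(u_2+u_4),\,u_1,\,u_3\bigr\}$ has $(3,2)$-entry equal to $\langle Cu_1, u_3\rangle$, which must vanish: the hypothesis $P_2\mc{A}P_1 = \{0\}$ forces $Cu_1 \in \ran P_1$, and $u_3 \in \ran P_2$. Thus $E\mc{A}E$ is contained in the proper subspace of $\mm_3$ cut out by this single linear condition. With $X = EBE$ and $Y = EAE$, a short calculation using the block structure gives $X_{31} = \tfrac{1}{\sqrt 2}\langle B_2 u_4, u_3\rangle \neq 0$ and $Y_{12} = \tfrac{1}{\sqrt 2}\langle A_1 u_1, u_2\rangle \neq 0$, while $X_{32} = Y_{32} = 0$ by the structural observation. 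Consequently $(XY)_{32} = X_{31}Y_{12} \neq 0$, so $XY \notin E\mc{A}E$, contradicting projection compressibility.

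The substantive obstacle is pinpointing the right compression. A projection with range inside $\ran P_1 \cup \ran P_2$ always produces a block upper triangular corner that is trivially multiplicatively closed, so some basis vector must straddle the two ranges. The asymmetric choice of mixing only $u_2$ with $u_4$ while keeping $u_1$ and $u_3$ pure is what isolates a single forced-zero entry at position $(3,2)$, and the non-scalarity of $\mc{A}_1$ and $\mc{A}_2$ provides the two independent sources needed to populate $X_{31}$ and $Y_{12}$ whose product then violates that forced zero.
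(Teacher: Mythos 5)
Your proof is correct and is essentially the paper's own argument: the paper also compresses by a rank-three projection whose range consists of one vector from each of $\ran P_1$ and $\ran P_2$ plus one mixed vector $\tfrac{1}{\sqrt2}(e_1^{(1)}+e_2^{(2)})$, observes that the (pure-$P_1$, pure-$P_2$) entry of the corner is forced to vanish by $P_2\mc{A}P_1=\{0\}$, and exhibits a product violating that constraint. The only differences are cosmetic: you use two separate witnesses $A$ and $B$ (which neatly sidesteps the paper's implicit step of finding a single element that is non-diagonal in both corners) and you treat general ranks directly rather than first reducing to rank-two subprojections.
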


\begin{proof}

First assume that $\rank(P_1)=\rank(P_2)=2$. By replacing $\mc{A}$ with the compression $(P_1+P_2)\mc{A}(P_1+P_2)$ if necessary, we may also assume that $P_1+P_2=I$.

Arguing by contradiction, suppose that $P_1\mc{A}P_1\neq\cc P_1$ and $P_2\mc{A}P_2\neq\cc P_2$. There then exists an operator $A\in\mc{A}$ such that $P_iAP_i\notin\cc P_i$ for each $i\in\{1,2\}$. Indeed, choose operators $A_1,A_2\in\mc{A}$ such that $P_1A_1P_1\notin\cc P_1$ and $P_2A_2P_2\notin\cc P_2$. If $P_2A_1P_2\notin \cc P_2$ or $P_1A_2P_1\notin \cc P_1$, then $A_1$ or $A_2$ will satisfy the above requirements. Otherwise, $A\coloneqq A_1+A_2$ will suffice.

Thus, assume that $A\in\mc{A}$ has been chosen such that $P_1AP_1\notin\cc P_1$ and $P_2AP_2\notin\cc P_2$. For each $i\in\{1,2\}$, choose an orthonormal basis $\left\{e_1^{(i)},e_2^{(i)}\right\}$ for $\ran(P_i)$ such that $P_iAP_i$ is not diagonal with respect to $\mc{B}=\left\{e_1^{(1)},e_2^{(1)},e_1^{(2)},e_2^{(2)}\right\}$. By permuting the basis vectors if necessary, we may assume that $\langle Ae_2^{(i)},e_1^{(i)}\rangle\neq 0$ for each $i\in\{1,2\}$.

Consider the matrix
$$Q\coloneqq \begin{bmatrix}
1 & 0 & 0 & 1\\
0 & 2 & 0 & 0\\
0 & 0 & 2 & 0\\
1 & 0 & 0 & 1
\end{bmatrix}\smallskip$$
written with respect to $\mc{B}$. It is straightforward to check that $\frac{1}{2}Q$ is a projection in $\mm_4$ and every $B\in Q\mc{A}Q$ satisfies $\langle Be_2^{(1)},e_1^{(2)}\rangle=0.$
With $A$ as above, however,
$$\langle (QAQ)^2e_2^{(1)},e_1^{(2)}\rangle=8\langle Ae_2^{(1)},e_1^{(1)}\rangle\langle Ae_2^{(2)},e_1^{(2)}\rangle\neq 0.\smallskip$$
Thus, $(QAQ)^2$ does not belong to $Q\mc{A}Q$, so $Q\mc{A}Q$ is not an algebra. This contradicts the assumption that $\mc{A}$ is projection compressible.

Now consider the general case in which each $P_i$ has rank at least $2$. One may deduce from the above analysis that for some $i\in\{1,2\}$, every rank-two subprojection $P\leq P_i$ is such that $P\mc{A}P=\cc P$. It then follows that $P_i\mc{A}P_i=\cc P_i$, as required.
\end{proof}

As we shall see in the coming analysis, Theorem~\ref{At most one non-scalar corner theorem} has significant implications for the classification of projection compressible algebras. Additionally, it highlights a major difference between the classification in this setting and that of $\mm_3$. Since $\mm_3$ cannot contain projections $P_1$ and $P_2$ as described in Theorem~\ref{At most one non-scalar corner theorem}, this result may help to explain why there exist certain projection compressible subalgebras of $\mm_3$ that do not admit analogues in higher dimensions (see \cite[Examples 3.2.1, 3.2.4, 3.2.7]{CMR1}).

The following corollaries to Theorem~\ref{At most one non-scalar corner theorem} provide a more explicit description of the reduced block upper triangular forms that can exist for a unital projection compressible algebra.

\begin{cor}\label{unique integer k corollary}

Let $n\geq 4$ be an integer, and let $\mc{A}$ be a unital subalgebra of $\mm_n$. Suppose that there is an orthogonal decomposition $\bigoplus_{i=1}^m\mc{V}_i$ of $\cc^n$ with respect to which 
\begin{itemize}
	\item[(i)]$\mc{A}$ is reduced block upper triangular, and\smallskip
	
	\item[(ii)]there is an index $k\in\{1,2,\ldots,m\}$ such that if $Q_1$, $Q_2$, and $Q_3$ denote the orthogonal projections onto $\bigoplus_{i<k}\mc{V}_i$, $\mc{V}_k$, and $\bigoplus_{i> k}\mc{V}_i$, respectively, then $$\begin{array}{ccc}(Q_1+Q_2)\mc{A}(Q_1+Q_2)\neq\cc (Q_1+Q_2) & \text{and} & (Q_2+Q_3)\mc{A}(Q_2+Q_3)\neq \cc (Q_2+Q_3). \end{array}\smallskip$$

\end{itemize}
If $\mc{A}$ is projection compressible, then $k$ is unique. When this is the case, $Q_1\mc{A}Q_1=\cc Q_1$ and $Q_3\mc{A}Q_3=\cc Q_3$.

\end{cor}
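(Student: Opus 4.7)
The plan is to reduce both assertions to two applications of Theorem~\ref{At most one non-scalar corner theorem}. The structural observation doing all the work is that whenever $R_1$ and $R_2$ are orthogonal projections onto $\bigoplus_{i\in I_1}\mc{V}_i$ and $\bigoplus_{i\in I_2}\mc{V}_i$ with every index in $I_2$ strictly exceeding every index in $I_1$, block upper triangularity of $\mc{A}$ forces $R_2\mc{A}R_1=\{0\}$. This is exactly the hypothesis required by Theorem~\ref{At most one non-scalar corner theorem}.

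For the claim that $Q_1\mc{A}Q_1=\cc Q_1$, I would take $P_1=Q_1$ and $P_2=Q_2+Q_3$; these are orthogonal with $P_2\mc{A}P_1=\{0\}$, so Theorem~\ref{At most one non-scalar corner theorem} yields $P_1\mc{A}P_1=\cc P_1$ or $P_2\mc{A}P_2=\cc P_2$. Hypothesis~(ii) rules out the second, giving $Q_1\mc{A}Q_1=\cc Q_1$; the symmetric choice $P_1=Q_1+Q_2$, $P_2=Q_3$ then delivers $Q_3\mc{A}Q_3=\cc Q_3$. For uniqueness, I would argue by contradiction: suppose some $k'\neq k$ also fulfils~(ii), and without loss of generality $k'<k$. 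Writing $Q_1',Q_2',Q_3'$ for the corresponding decomposition, set $P_1\coloneqq Q_1'+Q_2'$ and $P_2\coloneqq Q_2+Q_3$. These project onto $\bigoplus_{i\leq k'}\mc{V}_i$ and $\bigoplus_{i\geq k}\mc{V}_i$ respectively, so (since $k'<k$) they are orthogonal and $P_2\mc{A}P_1=\{0\}$ by the structural observation. Applying~(ii) to both $k$ and $k'$, neither $P_1\mc{A}P_1$ nor $P_2\mc{A}P_2$ is a scalar corner, contradicting Theorem~\ref{At most one non-scalar corner theorem}.

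The main obstacle I foresee is discharging the rank hypothesis $\min(\rank(P_1),\rank(P_2))\geq 2$ of Theorem~\ref{At most one non-scalar corner theorem}. Because the compression of a unital algebra by a rank-one projection $P$ always equals $\cc P$, the non-scalarity conditions in~(ii) automatically force $\rank(Q_1+Q_2)$, $\rank(Q_2+Q_3)$, and $\rank(Q_1'+Q_2')$ to be at least two. The only residual cases are $\rank(Q_1)\leq 1$ and $\rank(Q_3)\leq 1$, which arise when $k\in\{1,2\}$ or $k\in\{m-1,m\}$; in those boundary cases $Q_1\mc{A}Q_1\subseteq\cc Q_1$ and $Q_3\mc{A}Q_3\subseteq\cc Q_3$ already hold for dimensional reasons, and equality is immediate (trivially when the projection is zero, and by unitality when it has rank one), so the target conclusions are obtained without any further invocation of Theorem~\ref{At most one non-scalar corner theorem}.
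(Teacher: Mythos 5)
Your proposal is correct and follows essentially the same route as the paper: uniqueness is obtained exactly as in the paper's proof, by applying Theorem~\ref{At most one non-scalar corner theorem} to the orthogonal projections $Q_1'+Q_2'$ and $Q_2+Q_3$ (the block upper triangular form supplying the hypothesis $P_2\mc{A}P_1=\{0\}$). The only cosmetic difference is in the final claim, which you establish by a second direct application of the same theorem (with the low-rank boundary cases handled separately), whereas the paper deduces it from uniqueness by observing that $Q_1\mc{A}Q_1\neq\cc Q_1$ would make $k-1$ a second admissible index; both are immediate.
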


\begin{proof}

Assume that $\mc{A}$ is projection compressible. Suppose to the contrary that there were a second index $k^\prime$ together with corresponding projections $Q_1^\prime$, $Q_2^\prime$, and $Q_3^\prime$ such that 
$$\begin{array}{ccc}(Q_1^\prime+Q_2^\prime)\mc{A}(Q_1^\prime+Q_2^\prime)\neq\cc (Q_1^\prime+Q_2^\prime) & \text{and}
&(Q_2^\prime+Q_3^\prime)\mc{A}(Q_2^\prime+Q_3^\prime)\neq\cc (Q_2^\prime+Q_3^\prime).\end{array}\smallskip$$
Assume without loss of generality that $k<k^\prime$. The projections $P_1\coloneqq Q_1+Q_2$ and $P_2\coloneqq Q_2^\prime+Q_3^\prime$ then satisfy the hypotheses of Theorem~\ref{At most one non-scalar corner theorem}, so $P_1\mc{A}P_1=\cc P_1$ or $P_2\mc{A}P_2=\cc P_2$. This is a contradiction.

The final claim follows immediately from the uniqueness of $k$. Indeed, if $Q_1\mc{A}Q_1\neq \cc Q_1$, then $k-1$ would be another such index. If instead $Q_3\mc{A}Q_3\neq \cc Q_3$, then one could derive a similar contradiction by considering the index $k+1$. 
\end{proof}
\smallskip

The following special case of Corollary~\ref{unique integer k corollary} describes the situation for algebras whose block-diagonal contains a block of size at least $2$.

\begin{cor}\label{special case of uniqueness corollary}

Let $n\geq 4$ be an integer, and let $\mc{A}$ be a unital subalgebra of $\mm_n$. Suppose that there is a decomposition $\bigoplus_{i=1}^m\mc{V}_i$ of $\cc^n$ with respect to which

\begin{itemize}

	\item[(i)]$\mc{A}$ is reduced block upper triangular, and\smallskip
	
	\item[(ii)]there is an index $k\in\{1,2,\ldots,m\}$ such that $\dim\mc{V}_k\geq 2$.
	
	\end{itemize}
If $\mc{A}$ is projection compressible, then $k$ is unique. When this is the case, if $Q_1$, $Q_2$, and $Q_3$ denote the orthogonal projections onto $\bigoplus_{i<k}\mc{V}_i$, $\mc{V}_k$, and $\bigoplus_{i>k}\mc{V}_i$, respectively, then $$\begin{array}{cccc}Q_1\mc{A}Q_1=\cc Q_1, & Q_2\mc{A}Q_2=Q_2\mm_nQ_2, & \text{and} & Q_3\mc{A}Q_3=\cc Q_3.\end{array}\smallskip$$

\end{cor}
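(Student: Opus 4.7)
The plan is to deduce this directly from Corollary~\ref{unique integer k corollary} by verifying its hypotheses for the given $k$, and then separately identifying the $(k,k)$ corner.

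First, I would observe that because $\mc{A}$ is in reduced block upper triangular form with respect to $\bigoplus_{i=1}^m \mc{V}_i$ and $\dim\mc{V}_k\geq 2$, Definition~\ref{definition of reduced block upper triangular} forces $\mc{A}_{kk}=\mm_{\dim\mc{V}_k}$ (the alternative $\mc{A}_{kk}=\{0\}$ is ruled out since it requires $\dim\mc{V}_k=1$). In particular, $\mc{A}_{kk}$ contains non-scalar matrices, and $Q_2\mc{A}Q_2=Q_2\mm_n Q_2$, which handles the middle corner claim.

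Next, I would check condition (ii) of Corollary~\ref{unique integer k corollary} for this $k$. Picking any $A\in\mc{A}$ with $A_{kk}$ non-scalar (available because $\mc{A}_{kk}=\mm_{\dim\mc{V}_k}$), the compression $(Q_1+Q_2)A(Q_1+Q_2)$ has $A_{kk}$ as its $(k,k)$ block, and any scalar multiple of $Q_1+Q_2$ would have a scalar $(k,k)$ block. Hence $(Q_1+Q_2)\mc{A}(Q_1+Q_2)\neq\cc(Q_1+Q_2)$, and the same argument yields $(Q_2+Q_3)\mc{A}(Q_2+Q_3)\neq\cc(Q_2+Q_3)$. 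Therefore Corollary~\ref{unique integer k corollary} applies and immediately gives $Q_1\mc{A}Q_1=\cc Q_1$ and $Q_3\mc{A}Q_3=\cc Q_3$.

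Finally, for uniqueness of $k$, suppose there were some $k'\neq k$ with $\dim\mc{V}_{k'}\geq 2$. Exactly the same argument as above shows that $k'$ also satisfies condition (ii) of Corollary~\ref{unique integer k corollary}, contradicting the uniqueness statement of that corollary. Hence $k$ is unique.

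There is no real obstacle here: the statement is a direct specialization of Corollary~\ref{unique integer k corollary} to the case where the non-scalar behaviour on $(Q_1+Q_2)\mc{A}(Q_1+Q_2)$ and $(Q_2+Q_3)\mc{A}(Q_2+Q_3)$ is automatic, arising from a diagonal block of size at least $2$ in the reduced block upper triangular form. The only point requiring care is invoking the definition of reduced block upper triangular form to pin down that $\mc{A}_{kk}$ must be all of $\mm_{\dim\mc{V}_k}$ rather than $\{0\}$.
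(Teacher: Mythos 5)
Your proof is correct and follows exactly the route the paper intends: the paper states this corollary without an explicit proof, treating it as an immediate specialization of Corollary~\ref{unique integer k corollary}, and your argument---using the irreducibility clause of Definition~\ref{definition of reduced block upper triangular} to force $\mc{A}_{kk}=\mm_{\dim\mc{V}_k}$, hence the non-scalarity of $(Q_1+Q_2)\mc{A}(Q_1+Q_2)$ and $(Q_2+Q_3)\mc{A}(Q_2+Q_3)$, and then invoking the uniqueness and corner conclusions of that corollary---is precisely the implicit derivation.
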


%

The results presented above provide a strategy for classifying the unital subalgebras of $\mm_n$ that exhibit the projection compression property. Indeed, we may use Corollaries \ref{unique integer k corollary} and \ref{special case of uniqueness corollary} to partition the unital subalgebras of $\mm_n$ into the following three distinct types determined by their reduced block upper triangular forms:\smallskip

\noindent \textit{Type I:} $\mc{A}$ has a reduced block upper triangular form with respect to an orthogonal decomposition of $\cc^n$ such that there does \textit{not} exist an index $k$ as in Corollary~\ref{unique integer k corollary};\\

\noindent \textit{Type II:} $\mc{A}$ has a reduced block upper triangular form with respect to an orthogonal decomposition of $\cc^n$ such that $BD(\mc{A})$ contains a block of size at least $2$ (i.e., there is an integer $k$ as in Corollary~\ref{special case of uniqueness corollary}).\\

\noindent \textit{Type III:} For each orthogonal decomposition of $\cc^n$ with respect to which $\mc{A}$ is reduced block upper triangular, every block in $BD(\mc{A})$ is $1\times 1$, and there is an integer $k$ as in Corollary~\ref{unique integer k corollary}.\\

The unital projection compressible algebras of type I, II, and III will be analysed in \S4, \S5, and \S6, respectively. In each case, a classification of these algebras will be obtained up to transpose similarity by examining the structure their semi-simple and radical parts.\smallskip

	
	\section[4]{Algebras of Type I}

In what follows, the term \textit{type I} will be used to describe a unital subalgebra $\mc{A}$ of $\mm_n$, $n\geq 4$, that has a reduced block upper triangular form with respect to an orthogonal decomposition $\bigoplus_{i=1}^m\mc{V}_i$ of $\cc^n$, such that there does not exist an integer $k$ as in Corollary~\ref{unique integer k corollary}. If $\mc{A}$ is such an algebra, then it must be the case that $\dim\mc{V}_i=1$ for all $i$ (i.e., $m=n$). For instance, the algebra from Example~\ref{exmp:families of compressible algebras:1} is of type I if and only if $Q_2=0$; or $\rank(Q_2)=1$ and $Q_i=0$ for some $i\in\{1,3\}$.

The goal of this section is to determine which type I algebras possess the projection compression property. As we shall see, the type I algebras satisfying this condition are either unitizations of $\mc{LR}$-algebras, or unitarily equivalent to the type I algebra from Example~\ref{exmp:families of compressible algebras:1}. In order to demonstrate this systematically, it will be useful to keep a record of the orthogonal decompositions of $\cc^n$ with respect to which $\mc{A}$ satisfies the definition of type I.

\begin{defn}\label{definition of F(A)}
If $\mc{A}$ is an algebra of type I, let $\mc{F}_I=\mc{F}_I\mc{(A)}$ denote the set of pairs $\Omega=(d,\bigoplus_{i=1}^n\mc{V}_i)$, where 

\begin{itemize}
		\item[(i)] $\bigoplus_{i=1}^n\mc{V}_i$ is an orthogonal decomposition of $\cc^n$ with respect to which $\mc{A}$ is reduced block upper triangular, and 
		
		\item[(ii)]$d$ is an integer in $\{1,2,\ldots,n\}$ such that if $Q_{1\Omega}$ denotes the orthogonal projection onto $\bigoplus_{i=1}^{d}\mc{V}_i$, and $Q_{2\Omega}$ denotes its complement $I-Q_{1\Omega}$, then $$\begin{array}{ccc}Q_{1\Omega}\mc{A}Q_{1\Omega}=\cc Q_{1\Omega} & \text{and} & Q_{2\Omega}\mc{A}Q_{2\Omega}=\cc Q_{2\Omega}.\end{array}\smallskip$$
		
	\end{itemize}

	\end{defn}


\begin{notation}
\upshape{
If $\mc{A}$ is a type I algebra and $\Omega=(d,\bigoplus_{i=1}^n\mc{V}_i)$ is a pair in $\mc{F}_I\mc{(A)}$, the notation $n_{1\Omega}=d$ and $n_{2\Omega}=n-d$ will be used to refer to the ranks of $Q_{1\Omega}$ and $Q_{2\Omega}$, respectively.
}
\end{notation}

Suppose that $\mc{A}$ is a projection compressible algebra of type I and $\Omega$ is a pair in $\mc{F}_I\mc{(A)}$. In the language of \S2, each corner $Q_{i\Omega}\mc{A}Q_{i\Omega}=\cc Q_{i\Omega}$ is a diagonal algebra comprised of mutually linked $1\times 1$ blocks. Note that the blocks in $Q_{1\Omega}\mc{A}Q_{1\Omega}$ may or may not be linked to those in $Q_{2\Omega}\mc{A}Q_{2\Omega}$. If these blocks are linked, we will say that the projections $Q_{1\Omega}$ and $Q_{2\Omega}$ are \textit{linked}. Otherwise, we will say that $Q_{1\Omega}$ and $Q_{2\Omega}$ are \textit{unlinked}. Note that the projections $Q_{1\Omega}$ and $Q_{2\Omega}$ are linked for some pair in $\Omega\in\mc{F}_I(\mc{A})$ if and only if they are linked for every pair in $\mc{F}_I(\mc{A})$. 

It will be important to distinguish between the type I algebras whose projections are linked and those whose projections are unlinked. The projection compressible type I algebras with unlinked projections will be classified in $\S\ref{Subsection: Type I algebras unlinked}$, while those with linked projections will be classified in $\S\ref{Subsection: Type I algebras linked}$.  Before our analysis splits, however, let us examine one extreme case that will be relevant to the classification in either setting. 

Observe that if $\mc{A}$ is an algebra of type I and $\mc{F}_I(\mc{A})$ contains a pair $\Omega=(d,\bigoplus_{i=1}^n\mc{V}_i)$ with $d=n$, then $\mc{A}=\cc I$, and hence $\mc{A}$ is idempotent compressible. If instead $d=1$ or $d=n-1$, then Proposition~\ref{prop k0=2 or k0=n} indicates that $\mc{A}$ is the unitization of an $\mc{LR}$-algebra. 
The proof of this result relies on the following structure theorem for $\mm_n$-modules, which will be applied frequently throughout our analysis. For reference, see \linebreak \cite[Theorem 3.3]{Lam}.

\begin{thm}\label{structure of modules over Mn}
Let $n$ and $p$ be positive integers. 
	\begin{itemize}
		\item[(i)]If $\mc{S}\subseteq\mm_{n\times p}$ is a left $\mm_n$-module, then there is a projection $Q\in\mm_p$ such that $\mc{S}=\mm_{n\times p}Q$.
		\item[(ii)]If $\mc{S}\subseteq\mm_{p\times n}$ is a right $\mm_n$-module, then there is a projection $Q\in\mm_p$ such that $\mc{S}=Q\mm_{p\times n}$.\smallskip
	\end{itemize}	
\end{thm}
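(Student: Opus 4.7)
The plan is to prove (i) by identifying a left $\mm_n$-submodule $\mc{S}\subseteq\mm_{n\times p}$ with a subspace of $\cc^p$ that records the permissible row content of its elements, and then to obtain (ii) by the symmetric argument applied to columns.

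For (i), I would define $\mc{V}\subseteq\cc^p$ to be the subspace spanned by the rows of all matrices in $\mc{S}$. Every row of every element of $\mc{S}$ lies in $\mc{V}$ by definition; the main step is the converse, namely that any $B\in\mm_{n\times p}$ whose rows all lie in $\mc{V}$ already belongs to $\mc{S}$. This follows from left multiplication by the matrix units $E_{ij}\in\mm_n$. Since $\mc{S}$ is a left $\mm_n$-module, $E_{ij}A\in\mc{S}$ whenever $A\in\mc{S}$, and $E_{ij}A$ is the matrix with the $j$-th row of $A$ placed into row $i$ and zeros elsewhere. Summing and scaling such products lets me assemble any matrix whose rows belong to $\mc{V}$ as a single element of $\mc{S}$, so $\mc{S}=\{B\in\mm_{n\times p}:\text{every row of }B\text{ lies in }\mc{V}\}$.

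Having obtained this structural description, I would take $Q\in\mm_p$ to be the orthogonal projection whose row space is $\mc{V}$. Then for any $B\in\mm_{n\times p}$, the equation $B=BQ$ holds if and only if every row of $B$ lies in the row space of $Q$, i.e., in $\mc{V}$. This yields $\mm_{n\times p}Q=\mc{S}$. For (ii), the argument is symmetric: given a right $\mm_n$-module $\mc{S}\subseteq\mm_{p\times n}$, let $\mc{V}\subseteq\cc^p$ be the span of the columns of matrices in $\mc{S}$. Right multiplication of $A\in\mc{S}$ by a matrix unit $E_{ij}\in\mm_n$ places the $i$-th column of $A$ into column $j$ and zeros elsewhere, yielding the analogous structural description. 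Taking $Q\in\mm_p$ to be the orthogonal projection onto $\mc{V}$ produces the factorization $\mc{S}=Q\mm_{p\times n}$.

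No step is particularly delicate: the entire argument rests on the observation that the matrix units in $\mm_n$ permit rows (respectively columns) of elements of $\mc{S}$ to be repositioned freely and combined independently. The only care required is to keep track of the left/right and row/column conventions so that the projection $Q$ is matched correctly to the subspace $\mc{V}$; this is a bookkeeping matter rather than a conceptual obstacle.
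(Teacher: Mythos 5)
Your argument is correct. Note that the paper itself supplies no proof of this statement: it is quoted with a pointer to \cite[Theorem 3.3]{Lam}, where the result is extracted from the general module theory of the simple ring $\mm_n$ (every left $\mm_n$-module is a direct sum of copies of the simple module $\cc^n$, and $\mm_{n\times p}\cong(\cc^n)^p$, so submodules correspond to subspaces of $\cc^p$). Your matrix-unit computation is a self-contained, elementary instantiation of exactly that correspondence: the identity $E_{ij}A = e_i(\text{row}_j A)$ shows that a left submodule is determined by the subspace $\mc{V}\subseteq\cc^p$ of attainable row content, and closure under sums and left multiplication gives the reverse inclusion $\{B:\text{rows of }B\subseteq\mc{V}\}\subseteq\mc{S}$. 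The only point worth being slightly more careful about than you were is the passage from $\mc{V}$ to the projection $Q$: for a Hermitian idempotent $Q$, the condition $BQ=B$ says that the \emph{conjugate} transpose of each row of $B$ lies in $\ran(Q)$, so $Q$ should be taken as the orthogonal projection onto $\{r^*:r\ \text{a row of some}\ A\in\mc{S}\}$ rather than onto the span of the (untransposed) rows; as you anticipate, this is pure bookkeeping and does not affect the validity of the argument, since $\mm_{n\times p}Q=\{B:BQ=B\}$ for any idempotent $Q$. The same remarks apply verbatim to the column version in (ii). What your route buys is independence from the external reference; what the citation buys is brevity and the conceptual framing via semisimplicity.
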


\begin{prop}\label{prop k0=2 or k0=n}
Let $\mc{A}$ be a type I subalgebra of $\mm_n$. If there is a pair $\Omega=(d,\bigoplus_{i=1}^n\mc{V}_i)$ in $\mc{F}_I\mc{(A)}$ with $d=1$ or $d=n-1$, then $\mc{A}$ is the unitization of an $\mc{LR}$-algebra, and hence $\mc{A}$ is idempotent compressible.

\end{prop} 

\begin{proof}
Assume that $\mc{F}_I(\mc{A})$ contains a pair $\Omega=(n-1,\bigoplus_{i=1}^n\mc{V}_i)$. 
By Theorem~\ref{every algebra is similar to an unhinged algebra}, there exists an invertible upper triangular matrix $S$ such that ${\mc{A}_0\coloneqq S^{-1}\mc{A}S}$ is unhinged with respect to $\bigoplus_{i=1}^n\mc{V}_i$. Thus, since the class of $\mc{LR}$-algebras is invariant under similarity, it suffices to prove that $\mc{A}_0$ is the unitization of an $\mc{LR}$-algebra. 

Note that by Theorem~\ref{structure of modules over Mn}, there is a subprojection $Q_1^\prime\leq Q_{1\Omega}$ such that $$Q_{1\Omega}\mc{A}_0Q_{2\Omega}=Q_{1\Omega}Rad(\mc{A}_0)Q_{2\Omega}=Q_1^\prime\mm_nQ_{2\Omega}.\smallskip$$ Thus, either $\mc{V}_n$ is linked to the other $\mc{V}_i$'s, in which case $\mc{A}_0=Q_1^\prime\mm_nQ_{2\Omega}+\cc I;$ or $\mc{V}_n$ is not linked to the other $\mc{V}_i$'s, in which case $\mc{A}_0=(Q_1^\prime+Q_{2\Omega})\mm_nQ_{2\Omega} +\cc I.$ In either scenario, $\mc{A}_0$ is the unitization of an $\mc{LR}$-algebra.

Suppose instead that $\mc{F}_I\mc{(A)}$ contains a pair whose first entry is $1$. It follows that $\mc{F}_I(\mc{A}^{aT})$ contains a pair whose first entry is $n-1$. The above analysis then shows that $\mc{A}^{aT}$ is the unitization of an $\mc{LR}$-algebra, and thus so too is $\mc{A}$.
\end{proof}
\smallskip

\subsection{Type I Algebras with Unlinked Projections}\label{Subsection: Type I algebras unlinked}
In this section we consider the type I algebras $\mc{A}$ for which the pairs $\Omega=(d,\bigoplus_{i=1}^n\mc{V}_i)$ in $\mc{F}_I(\mc{A})$ are such that $Q_{1\Omega}$ and $Q_{2\Omega}$ are unlinked. In light of Proposition~\ref{prop k0=2 or k0=n} and its preceding remarks, we may assume that $1<d<n-1$ for all pairs $\Omega$. Thus, if $\Omega$ is any such pair, then $\min(d,n-d)\geq 2$. That is, the corresponding projections $Q_{1\Omega}$ and $Q_{2\Omega}$ have ranks $n_{1\Omega}\geq 2$ and $n_{2\Omega}\geq 2$, respectively. 

It will be shown in Theorem~\ref{unlinked type I general case theorem} that every projection compressible type I algebra satisfying the above assumptions is unitarily equivalent to the type~I algebra from Example~\ref{exmp:families of compressible algebras:1}. The majority of the work leading to this classification, however, occurs in Lemma~\ref{Case I - unlinked, same size lemma}. The proof of Lemma~\ref{Case I - unlinked, same size lemma} itself relies on several intermediate results concerning the structure of the radical of a projection compressible type I algebra.

It should be noted that while Lemmas~\ref{Case I - no 0 entry in radical lemma}, \ref{sum of blocks is an algebra lemma}, and \ref{particular algebra with 3 dimensional radical is not compressible lemma} are presented here in the context of type I algebras with unlinked projections, these results are also applicable to type I algebras whose projections are linked.

\begin{lem}\label{Case I - no 0 entry in radical lemma}

	Let $\mc{A}$ be a projection compressible type I subalgebra of $\mm_n$, and suppose that $\Omega=(d,\bigoplus_{i=1}^n\mc{V}_i)$ is a pair in $\mc{F}_I(\mc{A})$ with $1<d<n-1$. Suppose further that there are orthonormal bases $\left\{e_i^{(1)}\right\}_{i=1}^{n_{1\Omega}}$ for $\ran(Q_{1\Omega})$ and $\left\{e_i^{(2)}\right\}_{i=1}^{n_{2\Omega}}$ for $\ran(Q_{2\Omega})$, as well as indices $i_0$ and $j_0$ such that $$\langle Re_{j_0}^{(2)},e_{i_0}^{(1)}\rangle=0 \,\,\,\text{for all}\,\,\,R\in\rad.\smallskip$$ Then $Q_{1\Omega}$ and $Q_{2\Omega}$ are linked, and either $\langle R e_{j_0}^{(2)}, e_k^{(1)}\rangle=0$ for all $k\in\{1,2,\ldots, n_{1\Omega}\}$, or $\langle Re_k^{(2)},e_{i_0}^{(1)}\rangle=0$ for all $k\in\left\{1,2,\ldots, n_{2\Omega}\right\}.$
		
\end{lem}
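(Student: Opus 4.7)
The plan is to argue by contradiction by compressing $\mc{A}$ to a four-dimensional slice of $\cc^n$ via two carefully chosen rank-three orthogonal projections. Since $1<d<n-1$, both $Q_{1\Omega}$ and $Q_{2\Omega}$ have rank at least two, so one can choose indices $k_1\in\{1,\ldots,n_{1\Omega}\}\setminus\{i_0\}$ and $k_2\in\{1,\ldots,n_{2\Omega}\}\setminus\{j_0\}$. Writing each $A\in\mc{A}$ with respect to the ordered basis $\{e_{i_0}^{(1)},e_{k_1}^{(1)},e_{j_0}^{(2)},e_{k_2}^{(2)}\}$ of the subspace $W\coloneqq\mathrm{span}\{e_{i_0}^{(1)},e_{k_1}^{(1)},e_{j_0}^{(2)},e_{k_2}^{(2)}\}$, the reduced block upper triangular form together with the hypothesis $\langle Re_{j_0}^{(2)},e_{i_0}^{(1)}\rangle=0$ give
\[
A|_W=\begin{bmatrix}\alpha & 0 & 0 & b\\ 0 & \alpha & c & d\\ 0 & 0 & \beta & 0\\ 0 & 0 & 0 & \beta\end{bmatrix},
\]
where $\alpha=\beta$ precisely when $Q_{1\Omega}$ and $Q_{2\Omega}$ are linked and $b,c,d$ record the relevant entries of the radical part of $A$.

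To prove linking, I would introduce the rank-three projection $P_1$ onto the mutually orthogonal unit vectors $u_1=\tfrac{1}{\sqrt{2}}(e_{i_0}^{(1)}+e_{j_0}^{(2)})$, $u_2=e_{k_1}^{(1)}$, $u_3=e_{k_2}^{(2)}$. Direct expansion, using the hypothesis to kill the only cross term, gives
\[
P_1BP_1=\begin{bmatrix}(\alpha_B+\beta_B)/2 & 0 & b_B/\sqrt{2}\\ c_B/\sqrt{2} & \alpha_B & d_B\\ 0 & 0 & \beta_B\end{bmatrix}
\]
for every $B\in\mc{A}$, whereas $(P_1AP_1)^2$ carries $\alpha^2,\beta^2$ in positions $(2,2),(3,3)$ and $(\alpha+\beta)^2/4$ in position $(1,1)$. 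Requiring $(P_1AP_1)^2\in P_1\mc{A}P_1$ forces $(\alpha^2+\beta^2)/2=(\alpha+\beta)^2/4$, equivalently $(\alpha-\beta)^2=0$. If the projections were unlinked, one could take $A\in\mc{A}$ with $BD(A)=Q_{1\Omega}$ (so that $\alpha=1$, $\beta=0$), producing the desired contradiction.

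With linking established, assume toward contradiction that both alternatives of the dichotomy fail, so that there exist $R_1,R_2\in\rad$ and indices $k_1,k_2$ as above with $\langle R_1e_{j_0}^{(2)},e_{k_1}^{(1)}\rangle\neq 0$ and $\langle R_2e_{k_2}^{(2)},e_{i_0}^{(1)}\rangle\neq 0$. Because a finite-dimensional complex vector space is not covered by two proper subspaces, there is a single $R\in\rad$ for which $c\coloneqq\langle Re_{j_0}^{(2)},e_{k_1}^{(1)}\rangle$ and $b\coloneqq\langle Re_{k_2}^{(2)},e_{i_0}^{(1)}\rangle$ are both nonzero. Consider the rank-three projection $P_2$ onto the span of the mutually orthogonal unit vectors $v_1=\tfrac{1}{\sqrt{2}}(e_{i_0}^{(1)}+e_{j_0}^{(2)})$, $v_2=\tfrac{1}{\sqrt{2}}(e_{k_1}^{(1)}+e_{k_2}^{(2)})$, and $v_3=\tfrac{1}{\sqrt{2}}(e_{i_0}^{(1)}-e_{j_0}^{(2)})$. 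The linking kills the scalar contribution $\alpha_B\langle v_3,v_1\rangle=0$, while the vanishing $(i_0,j_0)$-entry of $\rad$ kills the remaining radical contribution, so $(P_2BP_2)_{13}=\langle Bv_3,v_1\rangle=0$ identically on $\mc{A}$. A short computation of $P_2RP_2$ and its square, however, yields $(P_2RP_2)^2_{13}=-bc/4\neq 0$, contradicting projection compressibility.

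The main obstacle is pinpointing the two rank-three projections that expose exactly the structural zeros needed: $P_1$ to distinguish linked from unlinked through a mismatch at the $(1,1)$ entry, and $P_2$ to trap an antisymmetric combination $e_{i_0}^{(1)}\pm e_{j_0}^{(2)}$ so that the product $bc$ surfaces in a position where the compression cannot reach. Once the projections are identified, the remainder reduces to routine $3\times 3$ matrix arithmetic on the block forms above.
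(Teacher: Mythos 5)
The second half of your argument is fine, but the first half---establishing that $Q_{1\Omega}$ and $Q_{2\Omega}$ are linked---has a genuine gap. Your displayed form for $A|_W$ puts a $0$ in the $(1,3)$ position, i.e.\ you assume $\langle Ae_{j_0}^{(2)},e_{i_0}^{(1)}\rangle=0$ for \emph{every} $A\in\mc{A}$, whereas the hypothesis only gives this for $R\in\rad$. In the unlinked case $\mc{A}$ need not be unhinged: by Theorem~\ref{every alg is similar to BD plus rad theorem} there is a fixed $M\in Q_{1\Omega}\mm_nQ_{2\Omega}$ such that every $A$ has the form $\alpha Q_{1\Omega}+\beta Q_{2\Omega}+(\beta-\alpha)M+R$, so $\langle Ae_{j_0}^{(2)},e_{i_0}^{(1)}\rangle=(\beta-\alpha)m$ with $m\coloneqq\langle Me_{j_0}^{(2)},e_{i_0}^{(1)}\rangle$ not necessarily zero; and you cannot unhinge by a similarity without destroying orthonormality and projection compressibility. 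Redoing your $P_1$-computation with this correction, the $(1,1)$-entry of $P_1BP_1$ becomes $\tfrac{1}{2}\left((1-m)\alpha+(1+m)\beta\right)$, and the condition $(P_1AP_1)^2\in P_1\mc{A}P_1$ yields $(1-m^2)(\alpha-\beta)^2=0$ rather than $(\alpha-\beta)^2=0$. Your single projection therefore only rules out unlinked algebras with $m\neq\pm 1$ and gives no contradiction when $m=\pm1$. Eliminating $M$ is exactly where the paper's proof does its real work: it uses the one-parameter family of projections $\frac{1}{k^2+1}P_k$ to extract the identity $m_{21}k^2(k^2+1)^3(1-km_{22})=0$ for all $k\in\rr$, forcing the relevant entry of $M$ to vanish, and only then reaches a contradiction via Theorem~\ref{At most one non-scalar corner theorem}. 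Your argument needs an analogous step before the linkage conclusion can be drawn.

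Once linkage is granted, your second half is correct: $BD(\mc{A})=\cc I$ forces $\mc{A}=\cc I+\rad$, so there is no $M$ to contend with, the off-diagonal-covering argument produces a single $R$ with $b,c\neq 0$, and the rank-three projection $P_2$ with $(P_2RP_2)^2_{13}=-bc/4\neq 0$ gives a clean, self-contained contradiction. This is a legitimate alternative to the paper's route, which instead compresses to the four vectors $e_{k_1}^{(1)},e_{j_0}^{(2)},e_{i_0}^{(1)},e_{k_2}^{(2)}$ and invokes Theorem~\ref{At most one non-scalar corner theorem}; your version avoids citing that theorem at the cost of an explicit $3\times 3$ computation.
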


\begin{proof}

	Suppose to the contrary that $Q_{1\Omega}$ and $Q_{2\Omega}$ are unlinked. By considering a suitable principal compression of $\mc{A}$ to a subalgebra of $\mm_4$, we may assume without loss of generality that $d=n_{1\Omega}=n_{2\Omega}=2$. Furthermore, we may reorder the bases if necessary to assume that $\langle R e_1^{(2)},e_2^{(1)}\rangle=0$ for all $R\in\rad$. 
	
	Since $\mc{A}$ is similar to $BD(\mc{A})\dotplus \rad$ via an upper triangular similarity, there is a fixed matrix $M$ in $Q_{1\Omega}\mc{A}Q_{2\Omega}$ such that with respect to the basis $\left\{e_1^{(1)},e_2^{(1)},e_1^{(2)},e_2^{(2)}\right\}$, every $A$ in $\mc{A}$ has the form 
	$$A=\left[\begin{array}{cc|cc}
	\alpha & 0 & 0 & 0\\
	& \alpha & 0 & 0\\ \hline
	& & \beta & 0\\
	& & & \beta
	\end{array}\right]+(\beta-\alpha)M+R$$
	for some $\alpha,\beta\in\cc$ and $R\in\rad.$
	
	For each $i,j\in\{1,2\}$ define $m_{ij}=\langle Me_j^{(2)},e_i^{(1)}\rangle$. Furthermore, for each $k\in\rr$ let $P_k$ denote the matrix
	$$P_k\coloneqq \begin{bmatrix}
	k^2+1 & 0 & 0 & 0\\
	0 & k^2 & 0 & -k\\
	0 & 0 & k^2+1 & 0\\
	0 & -k & 0 & 1
	\end{bmatrix},\smallskip$$
	so that $\frac{1}{k^2+1}P_k$ is a projection in $\mm_4$. By direct computation, one may verify that every element $B=(b_{ij})$ in $P_k\mc{A}P_k$ satisfies the equation 
	$$(k^2+1)b_{23}-m_{21}k^2(b_{33}-b_{11})=0.\smallskip$$ If, however, $A$ is as above with $\alpha=0$, $\beta=1$, and $R=0$, then for $(P_kAP_k)^2=(c_{ij})$, we have 
	$$(k^2+1)c_{23}-m_{21}k^2(c_{33}-c_{11})=m_{21}k^2
(k^2+1)^3(1-km_{22}).\smallskip$$
The fact that $\mc{A}$ is projection compressible implies that $(P_kAP_k)^2$ belongs to $P_k\mc{A}P_k$, and hence the right-hand side of the above expression must be $0$ for all $k$. We therefore deduce that $m_{21}=\langle Me_1^{(2)},e_2^{(1)}\rangle=0$.

It now follows that $\langle Ae_1^{(2)},e_2^{(1)}\rangle=0$ for all $A\in\mc{A}$. So with respect to the basis $\left\{e_1^{(1)},e_1^{(2)},e_2^{(1)},e_2^{(2)}\right\}$ for $\cc^4$, every $A\in\mc{A}$ may be expressed as 
$$A=\left[\begin{array}{cc|cc}
\alpha & (\beta-\alpha)m_{11}+r_{11} & 0 & (\beta-\alpha)m_{12}+r_{12}\\
& \beta & 0 & 0\\ \hline
& & \alpha & (\beta-\alpha)m_{22}+r_{22}\\
& & & \beta
\end{array}\right]\smallskip$$
for some $\alpha$, $\beta$, and $r_{ij}$ in $\cc$. Since $\alpha$ and $\beta$ may be chosen arbitrarily, this contradicts Theorem~\ref{At most one non-scalar corner theorem}. Thus, $Q_{1\Omega}$ and $Q_{2\Omega}$ must be linked.

For the final claim, observe that $BD(\mc{A})=\cc I$ as $Q_{1\Omega}$ and $Q_{2\Omega}$ are linked. By the remarks following Theorem~\ref{every algebra is similar to an unhinged algebra}, we have that $\mc{A}=\cc I\dotplus \rad$, and hence $$\langle Ae_{j_0}^{(2)},e_{i_0}^{(1)}\rangle=0\,\,\,\text{for all}\,\,\,A\in\mc{A}.$$ Suppose to the contrary that there exist indices $k_1\in\{1,2,\ldots, n_{1\Omega}\}\setminus\{i_0\}$, $k_2\in\{1,2,\ldots, n_{2\Omega}\}\setminus\{j_0\}$ and operators $A_1,A_2\in\mc{A}$ such that $\langle A_1e_{j_0}^{(2)},e_{k_1}^{(1)}\rangle\neq 0$ and $\langle A_2 e_{k_2}^{(2)},e_{i_0}^{(1)}\rangle\neq 0$. Let $P_1$ and $P_2$ denote the orthogonal projections onto $\mathrm{span}\{e_{k_1}^{(1)},e_{j_0}^{(2)}\}$ and $\mathrm{span}\{e_{i_0}^{(1)},e_{k_2}^{(2)}\}$, respectively. It is easy to see that $P_1\mc{A}P_1\neq\cc P_1$, $P_2\mc{A}P_2\neq\cc P_2$, and $P_2\mc{A}P_1=\{0\}$. Thus, Theorem~\ref{At most one non-scalar corner theorem} indicates that $\mc{A}$ is not projection compressible---a contradiction.
%
\end{proof}
\smallskip
%

\begin{lem}\label{sum of blocks is an algebra lemma}

Let $n\geq 4$ be an even integer, and let $\mc{A}$ be a projection compressible subalgebra of $\mm_{n}$. Let $Q_1$ be a projection in $\mm_{n}$ of rank $n/2$ and  define $Q_2\coloneqq I-Q_1$. If $E\in\mm_n$ is a partial isometry satisfying $E^*E=Q_1$ and $EE^*=Q_2$, then the linear space
$$\mc{A}_0\coloneqq \left\{Q_1AQ_1+E^*AQ_1+Q_1AE+E^*AE:A\in\mc{A}\right\}$$ is an algebra.

\end{lem}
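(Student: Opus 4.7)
The plan is to fold the four blocks of each $A\in\mc{A}$ into a single sum by compressing with a carefully chosen projection $P$ of rank $n/2$, and then read off multiplicative closure of $\mc{A}_0$ from the fact that $P\mc{A}P$ is already an algebra.

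First, I would work in the $2\times 2$ block form induced by $\cc^n=\ran(Q_1)\oplus\ran(Q_2)$. Writing $A=\begin{bmatrix} A_{11} & A_{12} \\ A_{21} & A_{22}\end{bmatrix}$, a direct computation using the given forms of $Q_1$ and $E$ gives
$$Q_1AQ_1+E^*AQ_1+Q_1AE+E^*AE \;=\; \begin{bmatrix} \Sigma_A & 0 \\ 0 & 0\end{bmatrix}, \qquad \Sigma_A\coloneqq A_{11}+A_{12}+A_{21}+A_{22}.$$
Consequently $\mc{A}_0\subseteq Q_1\mm_nQ_1$, and it suffices to show that $\mc{S}\coloneqq\{\Sigma_A:A\in\mc{A}\}\subseteq \mm_{n/2}$ is closed under multiplication.

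Next, I would take $P\coloneqq \tfrac{1}{2}(Q_1+Q_2+E+E^*)=\tfrac{1}{2}\begin{bmatrix} I & I \\ I & I\end{bmatrix}$; a routine check confirms $P^2=P=P^*$, so $P$ is an orthogonal projection of rank $n/2$. A second block computation yields
$$PAP=\tfrac{1}{4}\begin{bmatrix} \Sigma_A & \Sigma_A \\ \Sigma_A & \Sigma_A\end{bmatrix},$$
so the assignment $A\mapsto PAP$ factors through $A\mapsto \Sigma_A$, and the induced map $\Sigma_A\mapsto PAP$ is a linear bijection between $\mc{S}$ and $P\mc{A}P$.

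Finally, I would invoke projection compressibility: for any $A,B\in\mc{A}$ there exists some $D\in\mc{A}$ with $(PAP)(PBP)=PDP$. Multiplying out the matrices above gives
$$(PAP)(PBP)=\tfrac{1}{8}\begin{bmatrix} \Sigma_A\Sigma_B & \Sigma_A\Sigma_B \\ \Sigma_A\Sigma_B & \Sigma_A\Sigma_B\end{bmatrix} \quad\text{and}\quad PDP=\tfrac{1}{4}\begin{bmatrix} \Sigma_D & \Sigma_D \\ \Sigma_D & \Sigma_D\end{bmatrix},$$
so comparing corresponding blocks yields $\Sigma_A\Sigma_B=2\Sigma_D=\Sigma_{2D}$ with $2D\in\mc{A}$. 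Thus $\mc{S}$ is a subalgebra of $\mm_{n/2}$, and pulling this back through the bijection shows $\mc{A}_0$ is closed under multiplication. There is no substantial obstacle here; the argument is essentially bookkeeping once $P$ is identified, and the only mild subtlety is tracking the numerical factor $\tfrac{1}{2}$ so that $\Sigma_A\Sigma_B$ is recognized as the sum associated to $2D\in\mc{A}$ rather than to $D$ itself, which uses that $\mc{A}$ is closed under scalar multiplication.
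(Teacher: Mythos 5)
Your proof is correct and follows essentially the same route as the paper: both compress by the rank-$n/2$ projection $\tfrac{1}{2}\begin{bmatrix} I & I\\ I & I\end{bmatrix}$, identify $P\mc{A}P$ with the set of folded sums $\Sigma_A$, and use multiplicative closure of the corner together with the scalar factor of $2$ to conclude that $\mc{A}_0$ is closed under multiplication. The only difference is notational bookkeeping ($\Sigma_A$ versus the paper's direct use of elements $X\in\mc{A}_0$).
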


\begin{proof}

The assumptions on $E$ imply that the operator $P\coloneqq \frac{1}{2}(I+E+E^*)$ is a projection in $\mm_n$, and hence $P\mc{A}P$ is an algebra. 
One may verify that with respect to the decomposition $\cc^n=\ran(Q_1)\oplus \ran(Q_2)$, we have
$$P\mc{A}P=\left\{\begin{bmatrix}
X & X\\ X & X
\end{bmatrix}:X\in\mc{A}_0\right\}.$$
It follows that for any $X$ and $Y$ in $\mc{A}_0$,
$$\begin{bmatrix}
X & X\\ X & X
\end{bmatrix}\begin{bmatrix}
Y & Y\\ Y & Y
\end{bmatrix}=2\begin{bmatrix}
XY & XY\\ XY& XY
\end{bmatrix}\in P\mc{A}P,$$ and hence $XY$ belongs to $\mc{A}_0$ as well. Thus, $\mc{A}_0$ is an algebra.
\end{proof}
\smallskip

\begin{lem}\label{particular algebra with 3 dimensional radical is not compressible lemma}

	Let $\mc{A}$ be a type I subalgebra of $\mm_4$. If $\rad$ is $3$-dimensional and $\mc{F}_I\mc{(A)}$ contains a pair $\Omega=(d,\bigoplus_{i=1}^4\mc{V}_i)$ with $d=2$, then $\mc{A}$ is not projection compressible.

\end{lem}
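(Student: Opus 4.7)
The plan is to assume for contradiction that $\mc{A}$ is projection compressible, and then to derive a contradiction by analyzing $\rad$ as a $3$-dimensional subspace of the $4$-dimensional space $Q_{1\Omega}\mm_4 Q_{2\Omega}$. I would begin by choosing orthonormal bases $\{e_1^{(1)},e_2^{(1)}\}$ for $\ran Q_{1\Omega}$ and $\{e_1^{(2)},e_2^{(2)}\}$ for $\ran Q_{2\Omega}$, identifying $Q_{1\Omega}\mm_4 Q_{2\Omega}$ with $\mm_2(\cc)$ by sending $R$ to the matrix with $(i,j)$-entry $\langle R e_j^{(2)},e_i^{(1)}\rangle$. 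The Frobenius-orthogonal complement of $\rad$ inside this $\mm_2(\cc)$ is then a line $\cc N$ for some nonzero $N\in\mm_2(\cc)$, and the argument would split on whether $N$ has rank one or rank two.

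For the rank-one case, I would invoke the singular value decomposition (via a block-diagonal unitary on $\ran Q_{1\Omega}\oplus\ran Q_{2\Omega}$, which preserves projection compressibility) to place $N$ at a matrix-unit position $(i_0,j_0)$, so that $\rad=\{R\in\mm_2(\cc):R_{i_0j_0}=0\}$. Every $R\in\rad$ would then satisfy $\langle Re_{j_0}^{(2)},e_{i_0}^{(1)}\rangle=0$, and Lemma~\ref{Case I - no 0 entry in radical lemma} would force either the entire $i_0$-th row or the entire $j_0$-th column of $R$ (in these bases) to vanish identically. Either conclusion would impose a second independent linear condition on the hyperplane $\rad$, yielding $\dim\rad\leq 2$ and contradicting the hypothesis.

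For the rank-two case, another SVD would let me assume $N=\operatorname{diag}(\sigma_1,\sigma_2)$ with $\sigma_1,\sigma_2>0$, so $\rad=\{R\in\mm_2(\cc):\sigma_1R_{11}+\sigma_2R_{22}=0\}$. Setting $\mu=\sigma_2/\sigma_1$ and $w=(e_1^{(1)}+e_2^{(2)})/\sqrt{2}$, I would consider the rank-$3$ projection $P=ww^*+e_2^{(1)}(e_2^{(1)})^*+e_1^{(2)}(e_1^{(2)})^*$, and choose the test matrices $A=I-2\,e_1^{(1)}(e_2^{(2)})^*$ and $A'=\sqrt{2}\bigl(-\mu\,e_1^{(1)}(e_1^{(2)})^*+e_2^{(1)}(e_2^{(2)})^*\bigr)$. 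Since the subtracted summand of $A$ has $(1,2)$-block $\bigl(\begin{smallmatrix}0&-2\\0&0\end{smallmatrix}\bigr)$ (which trivially satisfies the defining relation of $\rad$), one has $A\in I+\rad\subseteq\mc{A}$; and $A'$ lies directly in $\rad\subseteq\mc{A}$ by an analogous check against $\sigma_1R_{11}+\sigma_2R_{22}=0$. A direct calculation in the basis $\{w,e_2^{(1)},e_1^{(2)}\}$ for $\ran P$ would yield $PAP=\operatorname{diag}(0,1,1)$ and $PA'P=\bigl(\begin{smallmatrix}0&0&-\mu\\1&0&0\\0&0&0\end{smallmatrix}\bigr)$, so that $(PAP)(PA'P)$ is the $3\times 3$ matrix whose only nonzero entry is a $1$ at position $(2,1)$. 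Inspecting how an arbitrary $B\in\mc{A}$ compresses to $PBP$ would show that matching the required entries forces the block-diagonal scalars of $B$ to both vanish (so $B\in\rad$), while the $(1,2)$-block of $B$ must have $R_{11}=0$ and $R_{22}=\sqrt{2}$---violating $\sigma_1R_{11}+\sigma_2R_{22}=0$. Hence $P\mc{A}P$ would fail to be multiplicatively closed, contradicting projection compressibility.

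The main obstacle is the rank-two case: it is necessary to discover a projection $P$ together with matrices $A$, $A'\in\mc{A}$ whose compressed product exhibits an entry that no compression $PBP$ with $B\in\mc{A}$ can reproduce. The mixing vector $w$ and the specific coefficients appearing in $A$, $A'$ must be tuned so that $(PAP)(PA'P)$ produces a single ``lower-triangular'' entry at position $(2,1)$, and so that pulling this entry back to $\mc{A}$ would demand a radical element violating the defining relation $\sigma_1R_{11}+\sigma_2R_{22}=0$. The fact that the same pair $(A,A')$ works uniformly---regardless of whether $Q_{1\Omega}$ and $Q_{2\Omega}$ are linked---is what makes the rank-two argument compact, since it only uses that $I\in\mc{A}$ and $\rad\subseteq\mc{A}$.
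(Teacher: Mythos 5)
Your proposal is correct, and it reaches the conclusion by a genuinely different route than the paper. Both arguments begin the same way, by describing the hyperplane $\rad\subseteq Q_{1\Omega}\mm_4Q_{2\Omega}$ through its Frobenius annihilator; but the paper only permutes basis vectors to arrange that one entry of the annihilator $\Gamma$ is nonzero, writes $\rad$ as a graph over the remaining three entries, and then runs a single computation with one fixed rank-three projection (the one mixing $e_2^{(1)}$ with $e_2^{(2)}$), showing that $(PAP)^2$ violates a linear relation satisfied by every element of $P\mc{A}P$ --- a relation that carries the nuisance constants $\gamma_{ij}$ and $m_{ij}$ throughout. You instead normalize the annihilator $N$ by a singular value decomposition and split on its rank. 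Your rank-one case is dispatched with no computation at all, by feeding the resulting permanent zero entry into Lemma~\ref{Case I - no 0 entry in radical lemma} and counting dimensions ($\dim\rad\leq 2<3$); this is legitimate, since that lemma is proved independently of the present one. Your rank-two case uses a different rank-three projection (mixing $e_1^{(1)}$ with $e_2^{(2)}$) and tests multiplicative closure on a product $(PAP)(PA'P)$ of two distinct compressions rather than a square. I checked the computation: in the basis $\{w,e_2^{(1)},e_1^{(2)}\}$ one indeed gets $PAP=\mathrm{diag}(0,1,1)$ and $PA'P$ as you state, their product is the $(2,1)$ matrix unit, and any $B\in\mc{A}$ compressing to it would need $\alpha=\beta=0$ (so $B\in\rad$) together with $R_{11}=0$ and $R_{22}=\sqrt{2}$, contradicting $\sigma_1R_{11}+\sigma_2R_{22}=0$. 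What your approach buys is a computation entirely free of the parameters $M$ and $\gamma_{ij}$, at the cost of a case split and a dependence on Lemma~\ref{Case I - no 0 entry in radical lemma}; the paper's approach is a single uniform (if messier) verification. Both treatments handle the linked and unlinked cases simultaneously by choosing test elements with $\alpha=\beta$.
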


\begin{proof}

Suppose that $\dim\rad=3$ and $\Omega$ is a pair in $\mc{F}_I(\mc{A})$ as described above. Write $\mc{A}=\mc{S}\dotplus\rad$, where $\mc{S}$ is similar to $BD(\mc{A})$ via a block upper triangular similarity. If $Q_{1\Omega}$ and $Q_{2\Omega}$ are linked, then $\mc{A}=\left\{\alpha I: \alpha\in\cc\right\}\dotplus\rad.$ If instead $Q_{1\Omega}$ and $Q_{2\Omega}$ are unlinked, there is a matrix $M\in Q_{1\Omega}\mm_4Q_{2\Omega}$ such that
	$$\mc{A}=\left\{\alpha Q_{1\Omega}+\beta Q_{2\Omega}+(\beta-\alpha)M:\alpha,\beta\in\cc\right\}\dotplus\rad.\smallskip$$ 

Note that the only distinctions between the linked and unlinked settings are the presence of the matrix $M$ and the freedom to choose $\alpha$ and $\beta$ independently. In the arguments that follow, we treat the entries of $M$ as arbitrary constants (possibly zero), and make no attempt to choose independent values for $\alpha$ and $\beta$. Thus, these arguments are applicable to both cases. 

For each $i\in\{1,2\}$, let $\left\{e_1^{(i)},e_2^{(i)}\right\}$ be an orthonormal basis for $\ran(Q_{i\Omega})$. Since $\rad$ is a $3$-dimensional subspace of $Q_{1\Omega}\mm_4Q_{2\Omega}$, there is a non-zero matrix $\Gamma\in Q_{1\Omega}\mm_4Q_{2\Omega}$ such that $\mathrm{Tr}(\Gamma^*R)=0$ for all $R$ in $\rad.$ By reordering the bases for $\ran(Q_{1\Omega})$ and $\ran(Q_{2\Omega})$ if necessary, we may assume that $\langle \Gamma e_1^{(2)},e_1^{(1)}\rangle$ is non-zero. From this it follows that there exist $\gamma_{12},\gamma_{21},\gamma_{22}\in\cc$ such that 
	$$\rad=\left\{\left[\begin{array}{cc|cc} 
0 & 0 & \gamma_{12}r_{12}+\gamma_{21}r_{21}+\gamma_{22}r_{22} & r_{12}\\
0 & 0 & r_{21} & r_{22}\\ \hline
0 & 0 & 0 & 0\\
0 & 0 & 0 & 0
\end{array}\right]:r_{12},r_{21},r_{22}\in\cc\right\}$$
with respect to the basis $\left\{e_1^{(1)},e_2^{(1)},e_1^{(2)},e_2^{(2)}\right\}$ for $\cc^4$. 

To see that $\mc{A}$ is not projection compressible, consider the matrix 
$$P\coloneqq \begin{bmatrix}
2 & 0 & 0 & 0\\
0 & 1 & 0 & 1\\
0 & 0 & 2 & 0\\
0 & 1 & 0 & 1
\end{bmatrix},\smallskip$$ and note that $\frac{1}{2}P$ is a projection in $\mm_4$. One may verify that every operator $B=(b_{ij})$ in $P\mc{A}P$ satisfies the equation
\begin{align*}
b_{13}-4\gamma_{22}b_{24}-2\gamma_{21}b_{23}-2\gamma_{12}b_{14}-(&\gamma_{12}m_{12}+\gamma_{21}m_{21}-\gamma_{22}(1-m_{22})-m_{11})b_{11}\\
+(\gamma_{12}m_{12}&+\gamma_{21}m_{21}+\gamma_{22}(1+m_{22})-m_{11})b_{33}=0,
\end{align*}
where for each $i,j\in\{1,2\}$, we define $m_{ij}=\langle Me_j^{(2)},e_i^{(1)}\rangle.$ 
If, however, $A$ is the element of $\mc{A}$ obtained by setting $\alpha=\beta=r_{12}=r_{21}=1$ and $r_{22}=0$, then $B\coloneqq (PAP)^2$ produces a value of $8$ on the left-hand side of the above equation. Consequently, $(PAP)^2$ does not belong to $P\mc{A}P$, so $P\mc{A}P$ is not an algebra.
\end{proof}
\smallskip

The following classical theorem from linear algebra will be applied in the proof of Lemma~\ref{Case I - unlinked, same size lemma} and used extensively throughout \S5. For reference, see \cite[Theorem~2.6.3]{HornJohnson}.

\begin{thm}[Singular Value Decomposition]\label{svd lem} Let $n$ and $p$ be positive integers, and let $A$ be a complex $n\times p$ matrix.

	\begin{itemize}
	\item[(i)] If $n\leq p$, then there are unitaries $U\in\mm_n$ and $V\in\mm_p$, and a positive semi-definite diagonal matrix $D\in\mm_n$ such that
$$U^*AV=\left[\begin{array}{cc}
D & 0
\end{array}\right].$$

	\item[(ii)] If $n\geq p$, then there are unitaries $U\in\mm_n$ and $V\in\mm_p$, and a positive semi-definite diagonal matrix $D\in\mm_p$ such that
$$U^*AV=\left[\begin{array}{c}
D \\  0
\end{array}\right].$$

	\end{itemize}

\end{thm}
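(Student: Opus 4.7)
The plan is to derive both parts from the spectral theorem applied to the positive semi-definite matrix $AA^*$ (respectively $A^*A$). Part (ii) reduces to part (i) by passing to $A^*$, so the substance of the argument lies in part (i).

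For part (i), assume $n\leq p$. The $n\times n$ matrix $AA^*$ is Hermitian and positive semi-definite, so the spectral theorem produces an orthonormal basis $u_1,\ldots,u_n$ of $\cc^n$ consisting of eigenvectors, with real eigenvalues $\sigma_1^2\geq \cdots\geq \sigma_n^2\geq 0$. Assemble these into the unitary $U=[u_1\,|\,\cdots\,|\,u_n]\in\mm_n$ and the positive semi-definite diagonal matrix $D=\mathrm{diag}(\sigma_1,\ldots,\sigma_n)$, and let $r$ denote the number of strictly positive $\sigma_i$.

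Next, construct $V$ column by column. For $1\leq i\leq r$, set $v_i\coloneqq \sigma_i^{-1}A^*u_i\in\cc^p$. A direct calculation shows these vectors are orthonormal:
$$\langle v_j,v_i\rangle=\frac{\langle A^*u_j,A^*u_i\rangle}{\sigma_i\sigma_j}=\frac{\langle AA^*u_j,u_i\rangle}{\sigma_i\sigma_j}=\delta_{ij}.$$
Extend $\{v_1,\ldots,v_r\}$ to an orthonormal basis $\{v_1,\ldots,v_p\}$ of $\cc^p$ and let $V=[v_1\,|\,\cdots\,|\,v_p]\in\mm_p$. The factorization $U^*AV=[D\mid 0]$ then follows by computing the entries $u_i^*Av_j$: for $i,j\leq r$ one obtains $\sigma_j\delta_{ij}$; for $i>r$ the identity $AA^*u_i=0$ forces $A^*u_i=0$, so the $i$th row vanishes; and for $j>r$ the vector $v_j$ is orthogonal to each $A^*u_i$, hence lies in $\ker A$ and the $j$th column vanishes.

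Part (ii) follows by applying part (i) to $A^*\in\mm_{p\times n}$ (so that the roles of $n$ and $p$ are swapped). If that gives $\widetilde U^*A^*\widetilde V=[D\mid 0]$, then taking the conjugate transpose yields $\widetilde V^*A\widetilde U=\left[\begin{array}{c}D\\0\end{array}\right]$, so one sets $U\coloneqq \widetilde V$ and $V\coloneqq \widetilde U$. There is no substantive obstacle in this argument; the only care required is in the bookkeeping of block sizes and in extending the initial orthonormal set $\{v_1,\ldots,v_r\}$ to a full orthonormal basis so that the zero block appears in the correct position.
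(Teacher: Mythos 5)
Your proof is correct. The paper does not actually prove this statement---it is quoted as a classical fact with a citation to Horn and Johnson, Theorem 2.6.3---so there is no internal argument to compare against; your derivation is the standard one, diagonalizing the positive semi-definite matrix $AA^*$ via the spectral theorem, defining $v_i=\sigma_i^{-1}A^*u_i$ for the nonzero singular values, checking orthonormality, and completing to a basis, with part (ii) obtained from part (i) by passing to $A^*$ and taking adjoints. All the individual verifications (orthonormality of the $v_i$, vanishing of rows with $A^*u_i=0$, and the fact that the extended columns $v_j$ for $j>r$ lie in $\ker A$ because they are orthogonal to $\mathrm{ran}(A^*)$) are sound. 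One cosmetic remark: your construction in part (ii) naturally yields $D\in\mm_p$ rather than $D\in\mm_n$ as literally written in the statement; the block shape $\left[\begin{smallmatrix}D\\0\end{smallmatrix}\right]$ of size $n\times p$ forces $D$ to be $p\times p$, so the statement's ``$D\in\mm_n$'' in part (ii) is a typo and your version is the correct one.
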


The principal application of Theorem~\ref{svd lem} will be in simplifying the structure of the semi-simple part of an algebra $\mc{A}$ in reduced block upper triangular form. Indeed, suppose that $\mc{A}=\mc{S}\dotplus\rad$ is a type~I subalgebra of $\mm_n$ where $\mc{S}$ is semi-simple. Let ${\Omega=(d,\bigoplus_{i=1}^n\mc{V}_i)}$ be a pair in $\mc{F}_I(\mc{A})$, and assume that the projections $Q_{1\Omega}$ and $Q_{2\Omega}$ are unlinked. For each $i\in\{1,2\}$, let $\left\{e_1^{(i)},e_2^{(i)},\ldots, e_{n_{i\Omega}}^{(i)}\right\}$ be an orthonormal basis for $\ran(Q_{i\Omega})$. As a consequence of Theorem~\ref{every algebra is similar to an unhinged algebra}, there is a matrix $M\in Q_{1\Omega}\mm_nQ_{2\Omega}$ such that $$\mc{S}=\left\{\alpha Q_{1\Omega}+\beta Q_{2\Omega}+(\beta-\alpha)M:\alpha,\beta\in\cc\right\}.\smallskip$$ 
\noindent It then follows from Theorem~\ref{svd lem} that there is a unitary $U\in\mm_n$ such that $Q_{1\Omega}UQ_{2\Omega}=0$, $Q_{2\Omega}UQ_{1\Omega}=0$, and $\langle U^*MUe_j^{(2)},e_i^{(1)}\rangle=0$ whenever $i\neq j.$

Finally, the proof of Lemma~\ref{Case I - unlinked, same size lemma} will require the following result of Azoff concerning the minimum dimension of a transitive space of linear operators.  Recall that a set $\mc{L}$ of linear transformations from $\cc^n$ to $\cc^m$ is said to be \textit{transitive} if for every non-zero $x\in\cc^n$ and arbitrary $y\in\cc^m$, there exists some $L\in\mc{L}$ such that $Lx=y$.

\begin{thm}\textup{\cite[Proposition 4.7]{Azoff}}\label{Azoff thm} If $\mc{L}$ is a transitive space of linear transformations from $\cc^n$ to $\cc^m$, then the dimension of $\mc{L}$ is at least $m+n-1$.
\end{thm}\smallskip

We are now prepared to state and prove Lemma~\ref{Case I - unlinked, same size lemma}. This result indicates that under certain restrictive assumptions, a projection compressible type I algebra with unlinked projections is unitarily equivalent to the type~I algebra from Example~\ref{exmp:families of compressible algebras:1}. Loosening these assumptions will require a refinement of Theorem~\ref{Azoff thm} to specific classes of transitive spaces of operators.

\begin{lem}\label{Case I - unlinked, same size lemma}

Let $n\geq 4$ be an even integer, and let $\mc{A}$ be a projection compressible type~I subalgebra of $\mm_n$. Suppose that $\mc{F}_I\mc{(A)}$ contains a pair $\Omega=(d,\bigoplus_{i=1}^n\mc{V}_i)$ with $d=n/2$. If the projections $Q_{1\Omega}$ and $Q_{2\Omega}$ are unlinked, then $\mc{A}$ is unitarily equivalent to $$\cc Q_{1\Omega}+\cc Q_{2\Omega}+Q_{1\Omega}\mm_n Q_{2\Omega},\smallskip$$
the type~I algebra from Example~\ref{exmp:families of compressible algebras:1}. Consequently, $\mc{A}$ is idempotent compressible.


\end{lem}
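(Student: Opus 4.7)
\medskip\noindent\textbf{Plan.} The objective is to establish that $\rad = Q_{1\Omega}\mm_n Q_{2\Omega}$. Once this is in hand, the matrix $M$ in the decomposition $\mc{A} = \mc{S}\dotplus\rad$ with $\mc{S} = \{\alpha Q_{1\Omega}+\beta Q_{2\Omega}+(\beta-\alpha)M : \alpha,\beta\in\cc\}$ (available by Theorems~\ref{Cor 14 from LMMR} and~\ref{every alg is similar to BD plus rad theorem} in the unlinked case) lies automatically in $\rad$ and may be absorbed, yielding $\mc{A} = \cc Q_{1\Omega}+\cc Q_{2\Omega}+Q_{1\Omega}\mm_n Q_{2\Omega}$. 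This is precisely the unital algebra of Example~\ref{exmp:families of compressible algebras:1} (with their middle projection taken to be zero), and is idempotent compressible by that example.

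\medskip\noindent\textbf{Rank-four compressions are saturated.} For any rank-two subprojections $P_i'\leq Q_{i\Omega}$, the rank-four projection $P = P_1'+P_2'$ makes $P\mc{A}P$ into a projection compressible subalgebra of $\mm_4$ that is type I with $d=2$ and whose block projections are unlinked (inherited from $\mc{A}$); its strict block upper triangular part is exactly $P\rad P\subseteq P_1'\mm_n P_2'\cong\mm_2$. I apply Lemma~\ref{Case I - no 0 entry in radical lemma} to $P\mc{A}P$ to see that $(P\rad P)^{\perp}$ contains no rank-one matrix, which forces $\dim(P\rad P)\geq 3$---every two-dimensional subspace of $\mm_2$ meets the rank-one locus, by applying the fundamental theorem of algebra to $\det(A+\lambda B)$. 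Lemma~\ref{particular algebra with 3 dimensional radical is not compressible lemma} then rules out $\dim(P\rad P)=3$. Hence $P\rad P = P_1'\mm_n P_2'$ for every such $P$; equivalently, the compression of $\rad$ onto every $2\times 2$ block (relative to any orthonormal bases of $\ran Q_{1\Omega}$ and $\ran Q_{2\Omega}$) is surjective.

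\medskip\noindent\textbf{Killing $\rad^\perp$.} Suppose for contradiction there is $\Gamma\in\rad^\perp\setminus\{0\}$ in $Q_{1\Omega}\mm_n Q_{2\Omega}$. If $\mathrm{rank}(\Gamma)=1$, write $\Gamma=xy^*$ and choose orthonormal bases extending normalized $x,y$; then $\langle Ry,x\rangle=0$ for every $R\in\rad$, which verifies the hypothesis of Lemma~\ref{Case I - no 0 entry in radical lemma} applied directly to $\mc{A}$ (valid since $1<d=n/2<n-1$ for $n\geq 4$), forcing $Q_{1\Omega}$ and $Q_{2\Omega}$ to be linked---contradicting the standing assumption. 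When $\mathrm{rank}(\Gamma)\geq 2$, I use Theorem~\ref{svd lem} to conjugate by a block-diagonal unitary preserving $Q_{1\Omega}$ and $Q_{2\Omega}$ so that $\Gamma$ is diagonal with positive entries $d_1,d_2$ in positions $(1,1),(2,2)$, so the orthogonality relation becomes $\sum_i d_iR_{ii}=0$ for all $R\in\rad$. To derive the contradiction in this case I combine the $2\times 2$-block surjectivity above with the sum-of-blocks machinery of Lemma~\ref{sum of blocks is an algebra lemma}: the algebra $\mc{A}_0 = \cc I+\cc M_{12}+\rad\subseteq\mm_{n/2}$ is not merely an algebra but projection compressible in $\mm_{n/2}$, as one sees by lifting any projection $P'\in\mm_{n/2}$ to the projection $\tfrac12\bigl(P'\otimes\bigl(\begin{smallmatrix}1&1\\1&1\end{smallmatrix}\bigr)\bigr)\in\mm_n$, whose compression of $\mc{A}$ factors through $P'\mc{A}_0 P'$. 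Using this together with a ``symmetric doublings plus one antisymmetric'' projection $P = \sum_{i=1}^{n/2}v_iv_i^* + ww^*$ in $\mm_n$, where $v_i=\tfrac1{\sqrt 2}(e_i^{(1)}+e_i^{(2)})$ and $w=\tfrac1{\sqrt 2}(e_1^{(1)}-e_1^{(2)})$, a direct calculation in the style of Lemma~\ref{particular algebra with 3 dimensional radical is not compressible lemma} exhibits an $A\in\mc{A}$ with $(PAP)^2\notin P\mc{A}P$ whenever $\Gamma\neq 0$.

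\medskip\noindent\textbf{Main obstacle.} The hardest step is the rank-$\geq 2$ case above. The single relation $\sum d_iR_{ii}=0$ on the diagonal entries of $R$ is not immediately incompatible with the $2\times 2$-block surjectivity from the previous step, so no purely dimensional argument closes things; the contradiction must come from an explicit projection-level computation. I expect constructing the right mixed ``symmetric-plus-antisymmetric'' projection and verifying that squaring its compression lands outside $P\mc{A}P$ to be by far the most technical piece of the proof.
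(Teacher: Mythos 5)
Your overall architecture is reasonable, and your first reduction is correct and is in fact a somewhat different (and clean) route to the key dimension bound: compressing to an arbitrary rank-four projection $P_1'+P_2'$, you combine Lemma~\ref{Case I - no 0 entry in radical lemma} (a rank-one matrix in $(P_1'\rad P_2')^{\perp}$ gives a permanent zero entry and hence forces linkage) with the fact that every $2$-dimensional subspace of $\mm_2$ meets the rank-one locus, and then invoke Lemma~\ref{particular algebra with 3 dimensional radical is not compressible lemma} to exclude dimension $3$, obtaining $P_1'\rad P_2'=P_1'\mm_nP_2'$ for all rank-two subprojections. The paper instead derives a codimension-at-most-two bound on $\rad$ from Lemma~\ref{sum of blocks is an algebra lemma} together with Burnside's Theorem. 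Your rank-one case of the endgame is also correct, and your observation that the algebra $\mc{A}_0$ of Lemma~\ref{sum of blocks is an algebra lemma} is itself projection compressible in $\mm_{n/2}$ is a valid strengthening.

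The genuine gap is the rank-$\geq 2$ case of ``Killing $\rad^{\perp}$,'' which is exactly the case your earlier steps cannot reach and which you yourself flag as unresolved. You assert that some projection of the form $P=\sum_i v_iv_i^*+ww^*$ admits an $A\in\mc{A}$ with $(PAP)^2\notin P\mc{A}P$ ``whenever $\Gamma\neq 0$,'' but you supply no identity satisfied by $P\mc{A}P$, no candidate $A$, and no computation; as written this is a conjecture, and it is precisely the step that carries the content of the lemma. The paper closes this case differently: having arranged that $\rad$ has codimension at most two with a complement consisting of diagonal matrices in the frame where $M$ is diagonalized via Theorem~\ref{svd lem}, it reorders the bases of $\ran(Q_{1\Omega})$ and $\ran(Q_{2\Omega})$ separately so that the two possibly dependent coordinates sit in the $(1,d-1)$ and $(2,d)$ positions, and then verifies for one explicit projection (the block matrix $2I\oplus\left[\begin{smallmatrix}1&1\\1&1\end{smallmatrix}\right]\oplus 2I$) and one explicit $A$ (with $t_{11}=t_{d,d-1}=1$) that $(PAP)^2=8\,e_1^{(1)}\otimes e_{d-1}^{(2)*}$, which is incompatible with any nontrivial linear dependence of $\gamma_1$ on the free entries. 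Until you either carry out your computation in full or adopt a reordering of this kind that converts the diagonal constraint $\sum_i d_iR_{ii}=0$ into a checkable statement about a single entry, the proof is incomplete.
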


\begin{proof}
For each $i\in\{1,2\}$, let $\left\{e_1^{(i)},e_2^{(i)},\ldots, e_d^{(i)}\right\}$ be an orthonormal basis for $\ran(Q_{i\Omega})$. As a consequence of Theorem~\ref{every algebra is similar to an unhinged algebra}, there is a matrix $M$ in $Q_{1\Omega}\mm_n Q_{2\Omega}$ such that 
$$\mc{A}=\left\{\alpha Q_{1\Omega}+\beta Q_{2\Omega}+(\beta-\alpha)M:\alpha,\beta\in\cc\right\}\dotplus\rad.\smallskip$$ In fact, one may assume by Theorem~\ref{svd lem} and its subsequent remarks that there are constants $m_{ij}\geq 0$ such that $\langle Me_j^{(2)},e_i^{(1)}\rangle=\delta_{ij}m_{ij}$ for all $i$ and $j$.

 Let $E\in\mm_n$ denote the partial isometry satisfying $Ee_i^{(1)}=e_i^{(2)}$ and $Ee_i^{(2)}=0$ for all $i\in\{1,2,\ldots, d\}$. Since $\mc{A}$ is projection compressible, Lemma~\ref{sum of blocks is an algebra lemma} implies that $$\mc{A}_0\coloneqq \left\{(\alpha+\beta)Q_{1\Omega}+(\beta-\alpha)ME+RE:\alpha,\beta\in\cc, R\in\rad\right\}\smallskip$$ is a subalgebra of $Q_{1\Omega}\mm_nQ_{1\Omega}$. If this subalgebra were proper, then by Burnside's theorem, we may change the orthonormal basis for $\ran(Q_{1\Omega})$ if necessary to assume that $\langle Ae_1^{(1)},e_d^{(1)}\rangle=0$ for all $A\in\mc{A}_0$. In this case, one may change the orthonormal basis for $\ran(Q_{2\Omega})$ accordingly and assume that $\langle R e_1^{(2)},e_d^{(1)}\rangle=0$ for all $R\in\rad.$ Since $Q_{1\Omega}$ and $Q_{2\Omega}$ are unlinked, an application of Lemma~\ref{Case I - no 0 entry in radical lemma} demonstrates that $\mc{A}$ lacks the projection compression property---a contradiction. 

We may therefore assume that $\mc{A}_0$ is equal to $Q_{1\Omega}\mm_nQ_{1\Omega}$. This means that $\rad E$ can be enlarged to a $d^2$-dimensional space by adding $$
\{\alpha(Q_{1\Omega}-ME)+\beta(Q_{1\Omega}+ME):\alpha,\beta\in\cc\},\smallskip$$ the linear span of two diagonal matrices in $Q_{1\Omega}\mm_nQ_{1\Omega}$. It follows that $$\dim\rad E=\dim\rad\geq d^2-2,$$ and any entries in $\rad E$ that depend linearly on other entries must be located on the diagonal. Our goal is to show that $\dim\rad=d^2$, and hence $\rad=Q_{1\Omega}\mm_nQ_{2\Omega}$.

Let us begin by addressing the case in which $n=4$, and hence $d=2$. If $\dim\rad$ is strictly less than $d^2=4$, then $\rad$ is $2$- or $3$-dimensional by the analysis above. If $\dim\rad=2,$ then by Theorem~\ref{Azoff thm}, $\rad$ is not transitive as a space of linear maps from $\ran(Q_{2\Omega})$ to $\ran(Q_{1\Omega})$. In this case there exist unit vectors $v\in\ran(Q_{1\Omega})$ and $w\in\ran(Q_{2\Omega})$ such that $Rw\in\cc v$ for every $R\in\rad.$ 
Choose unit vectors ${v^\prime\in\ran(Q_{1\Omega})\cap(\cc v)^\perp}$ and $w^\prime\in\ran(Q_{2\Omega})\cap(\cc w)^\perp$, and replace the orthonormal bases for $\ran(Q_{1\Omega})$ and $\ran(Q_{2\Omega})$ with $\{v, v^\prime\}$ and $\{w,w^\prime\}$, respectively. Since $$\begin{array}{rl}\langle Rw,v^\prime\rangle=\langle \lambda v,v^\prime\rangle=0& \text{for all}\,\, R\in\rad,\end{array}\smallskip$$ 
$\mc{A}$ lacks the projection compression property by Lemma~\ref{Case I - no 0 entry in radical lemma}---a contradiction. Using Lemma~\ref{particular algebra with 3 dimensional radical is not compressible lemma}, one may also obtain a contradiction in the case that $\dim\rad=3$.

Assume now that $n>4$. By the above analysis, there are at most two entries from $\rad E$ which cannot be chosen arbitrarily, and these entries necessarily occur on the diagonal. By reordering the bases for $\ran(Q_{1\Omega})$ and $\ran(Q_{2\Omega})$, we may relocate the linearly dependent entries to the $(1,n-1)$ and $(2,n)$ positions of $\rad$, respectively. That is, we may assume that with respect to the decomposition 
$$\cc^n=\vee\left\{e_1^{(1)},e_2^{(1)},\ldots, e_{d-1}^{(1)}\right\}\oplus \vee\left\{e_d^{(1)},e_1^{(2)}\right\}\oplus \vee\left\{e_2^{(2)},e_3^{(2)},\ldots, e_{d}^{(2)}\right\},\smallskip$$ each $A\in\mc{A}$ can be represented by a matrix of the form 
$$A=\left[\begin{array}{ccccc|cc|ccccc}
	\alpha & & & & & 0 & t_{11} & t_{12} & \cdots & t_{1,d-2} & \gamma_1 & t_{1d}\\
	& \alpha & & & & 0 & t_{21} & t_{22} & \cdots & t_{2,d-2} & t_{2,d-1} & \gamma_2\\ 
	&  & \alpha & & & 0 & t_{31} & t_{32} & \cdots & t_{3,d-2} & t_{3,d-1} & t_{3d}\\
	& & & \ddots & & \vdots & \vdots & \vdots & \ddots & \vdots & \vdots & \vdots\\
	 &&&& \alpha & 0 & t_{d-1,1} & t_{d-1,2} & \cdots & t_{d-1,d-2} & t_{d-1,d-1} & t_{d-1,d}\\ \hline
	 &&&&&\alpha & t_{d1} & t_{d2} & \cdots & t_{d,d-2} & t_{d,d-1} & t_{dd}\\
	 &&&&&&\beta & 0 & \cdots & 0 & 0 & 0 \\ \hline
	 &&&&&&&\beta\\
	 &&&&&&&&\ddots\\
	 &&&&&&&&&\beta\\
	 &&&&&&&&&&\beta\\
	 &&&&&&&&&&&\beta	   
	\end{array}\right],$$
	where $\alpha$, $\beta$, and $t_{ij}$ can be chosen arbitrarily, and $\gamma_1$ and $\gamma_2$ may depend linearly on these entries. We will demonstrate that, in fact, $\gamma_1$ and $\gamma_2$ can be chosen arbitrarily and independently of the remaining terms.
	
	Consider the matrix 
	$$P=\left[\begin{array}{ccccc|cc|ccccc}
	2 & & & & & &  &  & &  & &\\
	& 2 & & & & &  & &  &  &  & \\ 
	&  & 2 & & & &  &  &  &  & & \\
	& & & \ddots& && & &  &  &  & \\
	 &&&& 2 & &  &  & & &  & \\ \hline
	 &&&&& 1 & 1 &&&&&\\
	 &&&&& 1 & 1 & & & & & \\ \hline
	 &&&&&&&2\\
	 &&&&&&&&\ddots\\
	 &&&&&&&&&2\\
	 &&&&&&&&&&2\\
	 &&&&&&&&&&&2	   
	\end{array}\right]$$
	written with respect to the decomposition above. Observe that $\frac{1}{2}P$ is a projection in $\mm_n$. Direct computations show that with $A$ as above, $PAP$ is given by	
\begin{equation*}
\resizebox{\textwidth}{!} 
{
$\left[\begin{array}{ccccc|cc|ccccc}
	4\alpha & & & & & 2t_{11} & 2t_{11} & 4t_{12} & \cdots & 4t_{1,d-2} & 4\gamma_1 & 4t_{1d}\\
	& 4\alpha & & & & 2t_{21} & 2t_{21} & 4t_{22} & \cdots & 4t_{2,d-2} & 4t_{2,d-1} & 4\gamma_2\\ 
	&  & 4\alpha & & & 2t_{31} & 2t_{31} & 4t_{32} & \cdots & 4t_{3,d-2} & 4t_{3,d-1} & 4t_{3d}\\
	& & & \ddots & & \vdots & \vdots & \vdots & \ddots & \vdots & \vdots & \vdots\\
	 &&&& 4\alpha & 2t_{d-1,1} & 2t_{d-1,1} & 4t_{d-1,2} & \cdots & 4t_{d-1,d-2} & 4t_{d-1,d-1} & 4t_{d-1,d}\\ \hline
	 &&&&&\alpha+\beta+t_{d1} & \alpha+\beta+t_{d1} & 2t_{d2} & \cdots & 2t_{d,d-2} & 2t_{d,d-1} & 2t_{dd}\\
	 &&&&&\alpha+\beta+t_{d1} & \alpha+\beta+t_{d1} & 2t_{d2} & \cdots & 2t_{d,d-2} & 2t_{d,d-1} & 2t_{dd}\\ \hline
	 &&&&&&&4\beta\\
	 &&&&&&&&\ddots\\
	 &&&&&&&&&4\beta\\
	 &&&&&&&&&&4\beta\\
	 &&&&&&&&&&&4\beta	   
	\end{array}\right].
	$
	}
	\end{equation*}
Hence, it suffices to prove that $e_1^{(1)}\otimes {e_{d-1}^{(2)*}}$ and $e_2^{(1)}\otimes {e_d^{(2)*}}$ belong to $P\mc{A}P$. 

	To see that this is the case, let $A$ be as above with $t_{11}=t_{d,d-1}=1$ and $\alpha=\beta=t_{ij}=0$ for all other indices $i$ and $j$. It is straightforward to verify that $$(PAP)^2=8e_1^{(1)}\otimes {e_{d-1}^{(2)^*}}.\smallskip$$ Consequently, $e_1^{(1)}\otimes {e_{d-1}^{(2)*}}$ belongs to $P\mc{A}P$, so $\gamma_1$ can indeed be chosen arbitrarily. By reordering the basis to interchange the positions of $\gamma_1$ and $\gamma_2$, one may repeat this process to show that $\gamma_2$ may be chosen arbitrarily as well.
\end{proof}
\smallskip

Observe that the success of Lemma~\ref{Case I - unlinked, same size lemma} relied heavily on the existence of the pair $\Omega=(d,\bigoplus_{i=1}^n\mc{V}_i)$ with $d=n/2$. Indeed, without such a pair, one would be unable to directly apply Lemma~\ref{sum of blocks is an algebra lemma} or Burnside's Theorem to infer that $\dim\rad\geq d^2-2$.

Our final goal of this section is to generalize Lemma~\ref{Case I - unlinked, same size lemma} to type I algebras $\mc{A}$ that may not admit a pair $\Omega$ as describe above. We will accomplish this goal by applying Lemma~\ref{Case I - unlinked, same size lemma} to study the structure of the radical of certain principal compressions of $\mc{A}$. It will then follow from \cite[Theorem 1.2]{DMRTransitive}---an extension of Theorem~\ref{Azoff thm}---that $\mc{A}$ is unitarily equivalent to the type~I algebra from Example~\ref{exmp:families of compressible algebras:1}. In order to introduce this extension, we first present the following definition.

%
%

\begin{defn}

Let $\mc{L}$ be a vector space of linear transformations from $\cc^n$ to $\cc^m$, and let $k$ be a positive integer. We say that $\mc{L}$ is $k$\textit{-transitive} if for every choice of $k$ linearly independent vectors $x_1,x_2,\ldots, x_k$ in $\cc^n$, and every choice of $k$ arbitrary vectors $y_1,y_2,\ldots, y_k$ in $\cc^m$, there is an element $A\in\mc{L}$ such that $Ax_i=y_i$ for all $i\in\{1,2,\ldots, k\}.$\smallskip

\end{defn}

\begin{thm}\textup{\cite[Theorem 1.2]{DMRTransitive}}\label{DMR Theorem} If $\mc{L}$ is a $k$-transitive space of linear transformations from $\cc^n$ to $\cc^m$, then the dimension of $\mc{L}$ is at least $k(m+n-k)$.\smallskip

\end{thm}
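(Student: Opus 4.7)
My plan is to prove Theorem~\ref{DMR Theorem} by induction on the transitivity index $k$, leveraging the classical case $k = 1$ and then descending to lower transitivity via a quotient construction.

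For the base case $k = 1$, the desired bound $\dim\mc{L} \geq m + n - 1$ is the classical minimum-dimension theorem for transitive operator spaces, which can be established via a minimal-rank argument: one selects $A_0 \in \mc{L} \setminus \{0\}$ of minimal positive rank $r$ and uses $1$-transitivity to show that any such $A_0$ must in fact have rank $1$ (otherwise one can combine $A_0$ with an element of $\mc{L}$ produced by $1$-transitivity to strictly lower the rank, contradicting minimality). Once a rank-one element $u \otimes v^* \in \mc{L}$ is in hand, a direct dimension count produces $m$ linearly independent elements whose evaluation at $v$ spans $\cc^m$ together with $n - 1$ further linearly independent elements vanishing on $v$, giving the required total of $m + n - 1$.

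For the inductive step, suppose the result holds for $k - 1$. Given a $k$-transitive space $\mc{L}$, I would fix a nonzero $x \in \cc^n$ and consider the subspace $\mc{L}_x \coloneqq \{A \in \mc{L} : Ax = 0\}$. Viewed as a space of linear maps from $\cc^n/\mathrm{span}(x) \cong \cc^{n-1}$ to $\cc^m$, this subspace is $(k-1)$-transitive: given linearly independent cosets $\bar v_1, \ldots, \bar v_{k-1}$ with representatives $v_1, \ldots, v_{k-1}$, the tuple $(x, v_1, \ldots, v_{k-1})$ is linearly independent in $\cc^n$, so $k$-transitivity of $\mc{L}$ produces elements with prescribed values $(0, y_1, \ldots, y_{k-1})$. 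Applying the inductive hypothesis yields $\dim\mc{L}_x \geq (k-1)(m + n - k)$; combined with the fact that $1$-transitivity makes the evaluation $A \mapsto Ax$ surjective onto $\cc^m$, this gives $\dim\mc{L} = m + \dim\mc{L}_x \geq k(m + n - k) - (n - k)$.

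The principal obstacle is closing the residual gap of $n - k$ at each inductive level. This deficit arises because the descent through $\mc{L}_x$ consumes only one of the $k$ available units of transitivity, whereas the target bound depends jointly on $k$ and $n - k$. To rectify it, I would vary $x$ across a suitable flag in $\cc^n$ and study the incidence configuration $\{(x, A) \in \cc^n \times \mc{L} : Ax = 0\}$ using a rank stratification of $\mc{L}$, arguing that $k$-transitivity forces a rich supply of low-rank elements whose common kernels cover large subspaces. Alternatively, one can strengthen the induction by tracking not merely the dimension of $\mc{L}_x$ but also the image of $\mc{L}_x$ under evaluation at auxiliary vectors, so that the cumulative content of $2$-, $3$-, and higher transitivity is absorbed stage by stage. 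This sharpening is the essential technical contribution of \cite{DMRTransitive}, where it is carried out by means of determinantal identities and careful inductive bookkeeping, and it is precisely the step I expect to consume the bulk of the work.
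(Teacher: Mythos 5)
There is nothing in the paper to compare against here: Theorem~\ref{DMR Theorem} is imported verbatim from \cite{DMRTransitive} (their Theorem 1.2) and is used as a black box, so the only question is whether your argument actually establishes the bound. It does not, and your own final paragraph concedes this. The inductive machinery you set up is sound as far as it goes --- the evaluation map $A\mapsto Ax$ is surjective with kernel $\mc{L}_x$, and $\mc{L}_x$ is indeed $(k-1)$-transitive on $\cc^n/\mathrm{span}(x)$ --- but it only delivers
$$\dim\mc{L}\;=\;m+\dim\mc{L}_x\;\geq\;m+(k-1)\bigl(m+(n-1)-(k-1)\bigr)\;=\;k(m+n-k)-(n-k),$$
which is strictly weaker than the theorem whenever $k<n$. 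The entire content of the theorem beyond the classical $k=1$ case lies in recovering that missing $n-k$, and the devices you name for doing so (varying $x$ over a flag, a rank stratification, ``determinantal identities and careful inductive bookkeeping'') are descriptions of where a proof might live, not steps of one. Deferring ``the essential technical contribution of \cite{DMRTransitive}'' to \cite{DMRTransitive} means the statement has not been proved.

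A secondary issue: even your base case is not complete as written. The assertion that a $1$-transitive space must contain a rank-one element is stated without a mechanism --- the sketch ``combine $A_0$ with an element produced by $1$-transitivity to strictly lower the rank'' does not obviously work, since adding a multiple of another element of $\mc{L}$ to a minimal-rank $A_0$ need not decrease rank, and the standard proofs of the $m+n-1$ lower bound do not proceed by first exhibiting a rank-one element. If you intend to prove the theorem rather than cite it, both the base case and, far more seriously, the closing of the $n-k$ deficit in the inductive step need actual arguments.
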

%
%

We are now prepared to prove the classification in the general case of type I algebras with unlinked projections.

\begin{thm}\label{unlinked type I general case theorem}

	Let $\mc{A}$ be a projection compressible type I subalgebra of $\mm_n$, and let $\Omega=(d,\bigoplus_{i=1}^n\mc{V}_i)$ be a pair in $\mc{F}_I(\mc{A})$ with $1<d<n-1$. If $Q_{1\Omega}$ and $Q_{2\Omega}$ are unlinked, then $\mc{A}$ is unitarily equivalent to
	$$\cc Q_{1\Omega}+\cc Q_{2\Omega}+Q_{1\Omega}\mm_nQ_{2\Omega},\smallskip$$
	the type I algebra from Example~\ref{exmp:families of compressible algebras:1}. Consequently, $\mc{A}$ is idempotent compressible.

\end{thm}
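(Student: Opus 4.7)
My plan is to reduce the general unequal-rank case to Lemma~\ref{Case I - unlinked, same size lemma} by restricting $\mc{A}$ to well-chosen rank-$2n_{1\Omega}$ corners, and then to use Theorem~\ref{DMR Theorem} to promote the transitivity of $\rad$ obtained from the lemma into a full dimension count. First, by applying the anti-transpose if necessary, I will assume without loss of generality that $n_{1\Omega}\leq n_{2\Omega}$. Next, using Theorem~\ref{every alg is similar to BD plus rad theorem}, I write $\mc{A}=\mc{S}\dotplus\rad$ with $\mc{S}=\{\alpha Q_{1\Omega}+\beta Q_{2\Omega}+(\beta-\alpha)M:\alpha,\beta\in\cc\}$ for some $M\in Q_{1\Omega}\mm_nQ_{2\Omega}$, and I invoke Theorem~\ref{svd lem} to choose orthonormal bases of $\ran(Q_{1\Omega})$ and $\ran(Q_{2\Omega})$ that diagonalise $M$.

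The core step is to examine, for every rank-$n_{1\Omega}$ subprojection $P_2\leq Q_{2\Omega}$, the compressed algebra $\mc{B}\coloneqq(Q_{1\Omega}+P_2)\mc{A}(Q_{1\Omega}+P_2)$. A direct computation gives $\mc{B}=\{\alpha Q_{1\Omega}+\beta P_2+(\beta-\alpha)MP_2+RP_2:\alpha,\beta\in\cc,\,R\in\rad\}$, and projection compressibility is inherited from $\mc{A}$. By splitting $\ran(Q_{1\Omega})$ and $\ran(P_2)$ into one-dimensional coordinate subspaces, $\mc{B}$ acquires a reduced block upper triangular form on $\cc^{2n_{1\Omega}}$ whose block-diagonal is spanned by the independent scalars $\alpha$ and $\beta$; this makes the two halves unlinked and precludes the existence of any index $k$ as in Corollary~\ref{unique integer k corollary}, so $\mc{B}$ is a type I algebra in $\mm_{2n_{1\Omega}}$ with an $\mc{F}_I$-pair of equal rank $n_{1\Omega}=n'/2$. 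Lemma~\ref{Case I - unlinked, same size lemma} therefore applies and yields that $\mc{B}$ is unitarily equivalent to $\cc Q_{1\Omega}+\cc P_2+Q_{1\Omega}\mm_nP_2$.

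Because unitary equivalence preserves the Jacobson radical, this forces $\dim\operatorname{Rad}(\mc{B})=n_{1\Omega}^2$. A short check shows that the strictly block upper triangular elements of $\mc{B}$ are precisely the $\alpha=\beta=0$ slice $\{RP_2:R\in\rad\}$, so $\operatorname{Rad}(\mc{B})=\rad\cdot P_2$; hence $\rad\cdot P_2=Q_{1\Omega}\mm_nP_2$ for every rank-$n_{1\Omega}$ subprojection $P_2\leq Q_{2\Omega}$. This yields $n_{1\Omega}$-transitivity of $\rad$ as a space of linear maps $\ran(Q_{2\Omega})\to\ran(Q_{1\Omega})$: given linearly independent $x_1,\dots,x_{n_{1\Omega}}\in\ran(Q_{2\Omega})$ and arbitrary $y_1,\dots,y_{n_{1\Omega}}\in\ran(Q_{1\Omega})$, I take $P_2$ to be the projection onto $\vee\{x_i\}$ and pull back the matrix in $Q_{1\Omega}\mm_nP_2$ sending $x_i\mapsto y_i$ to produce the required $R\in\rad$.

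Finally, Theorem~\ref{DMR Theorem} applied with $k=m=n_{1\Omega}$ and $n=n_{2\Omega}$ gives $\dim\rad\geq n_{1\Omega}(n_{1\Omega}+n_{2\Omega}-n_{1\Omega})=n_{1\Omega}n_{2\Omega}$, forcing $\rad=Q_{1\Omega}\mm_nQ_{2\Omega}$. In particular $M\in\rad$, so both $Q_{1\Omega}=(Q_{1\Omega}-M)+M$ and $Q_{2\Omega}=(Q_{2\Omega}+M)-M$ lie in $\mc{A}$, and consequently $\mc{A}=\cc Q_{1\Omega}+\cc Q_{2\Omega}+Q_{1\Omega}\mm_nQ_{2\Omega}$ as required; idempotent compressibility then follows from Example~\ref{exmp:families of compressible algebras:1}. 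The main obstacle I anticipate is the bookkeeping required to confirm that every compression $\mc{B}$ satisfies all the hypotheses of Lemma~\ref{Case I - unlinked, same size lemma}---notably the type I condition and the equal-rank structure of its distinguished pair---so that the lemma's unitary-equivalence conclusion can be unpacked into the dimensional identity $\operatorname{Rad}(\mc{B})=\rad\cdot P_2$ on which the DMR bound depends.
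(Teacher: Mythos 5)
Your proposal is correct and follows essentially the same route as the paper: after the anti-transpose reduction to $n_{1\Omega}\leq n_{2\Omega}$, the paper likewise compresses to $(Q_{1\Omega}+Q_2')\mc{A}(Q_{1\Omega}+Q_2')$ for the projection $Q_2'$ onto the span of $d$ linearly independent vectors, applies Lemma~\ref{Case I - unlinked, same size lemma} to conclude that the compressed radical is all of $Q_{1\Omega}\mm_nQ_2'$, deduces $d$-transitivity of $\rad$, and closes with the bound from Theorem~\ref{DMR Theorem}. The only differences are that you spell out more of the bookkeeping (verifying the compression is type I with an equal-rank unlinked pair, and recovering the final form of $\mc{A}$ from $\rad=Q_{1\Omega}\mm_nQ_{2\Omega}$), which the paper leaves implicit.
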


\begin{proof}

	By replacing $\mc{A}$ with $\mc{A}^{aT}$ if necessary, we may assume that $d\leq n-d$. That is, $n_{1\Omega}\leq n_{2\Omega}$. We will demonstrate that $\rad$ has dimension $d(n-d)$, and hence must be equal to $Q_{1\Omega}\mm_nQ_{2\Omega}$. Of course, it is clear that $\dim\rad\leq d(n-d)$.
	
	Note that $\rad$ is $d$-transitive as a space of linear maps from $\ran(Q_{2\Omega})$ to $\ran(Q_{1\Omega})$. Indeed, let $S$ be a linearly independent $d$-element subset of $\ran(Q_{2\Omega})$, and let $Q_S$ denote the orthogonal projection onto the span of $S$. Since $Q_{1\Omega}$ and $Q_S$ are both of rank $d$, Lemma~\ref{Case I - unlinked, same size lemma} implies that the radical of  $$\mc{A}_0\coloneqq (Q_{1\Omega}+Q_S)\mc{A}(Q_{1\Omega}+Q_S)$$ is equal to $Q_{1\Omega}\mm_nQ_S$. As a result, the vectors in $S$ can be mapped anywhere in $\ran(Q_{1\Omega})$ by elements of $\rad$. We conclude that $\rad$ is $d$-transitive.
	
	The proof ends with an application of Theorem~\ref{DMR Theorem}. Since $\rad$ is a $d$-transitive subspace of $Q_{1\Omega}\mm_nQ_{2\Omega}$, we have that $\dim\rad\geq d(d+(n-d)-d)=d(n-d).$
\end{proof}
\smallskip

\subsection{Type I Algebras with Linked Projections}\label{Subsection: Type I algebras linked}
We now wish to describe the projection compressible type I algebras $\mc{A}$ for which the pairs $\Omega=(d,\bigoplus_{i=1}^n\mc{V}_i)$ in $\mc{F}_I(\mc{A})$ are such that $Q_{1\Omega}$ is linked to $Q_{2\Omega}$. An inductive argument in Theorem~\ref{big thm for linked type I algebras} will demonstrate that every such algebra is the unitization of an $\mc{LR}$-algebra. The base case of this argument will require the following lemma.

\begin{lem}\label{Case I - n_1=2 case for linked algebras lemma}

	Let $\mc{A}$ be a projection compressible type I subalgebra of $\mm_4$, and suppose that $\mc{F}_I\mc{(A)}$ contains a pair $\Omega=(d,\bigoplus_{i=1}^4\mc{V}_i)$ with $d=2$. If $Q_{1\Omega}$ and $Q_{2\Omega}$ are linked, then there are projections $Q_1^\prime\leq Q_{1\Omega}$ and $Q_2^\prime\leq Q_{2\Omega}$ such that $\rad=Q_1^\prime\mm_4Q_2^\prime$. In this case $\mc{A}$ is the unitization of an $\mc{LR}$-algebra, so $\mc{A}$ is idempotent compressible.

\end{lem}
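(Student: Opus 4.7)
The plan is to show $\rad = Q_1'\mm_4 Q_2'$ for some subprojections $Q_1' \leq Q_{1\Omega}$ and $Q_2' \leq Q_{2\Omega}$; once established, $\mc{A} = \cc I + Q_1'\mm_4 Q_2'$ is the unitization of the $\mc{LR}$-algebra $Q_1'\mm_4 Q_2'$, and idempotent compressibility follows from \cite[Corollary~2.0.9]{CMR1} together with \cite[Proposition~2.0.4]{CMR1}. Since $Q_{1\Omega}$ and $Q_{2\Omega}$ are linked, $BD(\mc{A}) = \cc I$, so the remarks following Theorem~\ref{every alg is similar to BD plus rad theorem} give $\mc{A} = \cc I + \rad$ with $\rad \subseteq Q_{1\Omega}\mm_4 Q_{2\Omega}$, and hence $\dim \rad \in \{0,1,2,3,4\}$. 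The extreme cases $\dim\rad \in \{0,4\}$ are immediate, and Lemma~\ref{particular algebra with 3 dimensional radical is not compressible lemma} excludes $\dim\rad = 3$.

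For $\dim\rad = 1$, pick a generator $R_0$ and apply Theorem~\ref{svd lem} to choose orthonormal bases $\{e_1^{(1)}, e_2^{(1)}\}$ of $\ran Q_{1\Omega}$ and $\{e_1^{(2)}, e_2^{(2)}\}$ of $\ran Q_{2\Omega}$ diagonalising $R_0$. If $R_0$ had rank two, then $\langle R_0 e_1^{(2)}, e_2^{(1)}\rangle = 0$ would trigger Lemma~\ref{Case I - no 0 entry in radical lemma}, forcing either $R_0 e_1^{(2)} = 0$ or $\langle R_0 e_k^{(2)}, e_2^{(1)}\rangle = 0$ for every $k$, both impossible since $R_0$ is invertible. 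Therefore $R_0$ has the form $\lambda v w^*$, and $\rad = \cc R_0 = Q_1'\mm_4 Q_2'$ where $Q_1'$ and $Q_2'$ are the rank-one projections onto $\cc v$ and $\cc w$.

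For $\dim\rad = 2$, fix orthonormal bases $\{e_i^{(1)}\}$ and $\{e_i^{(2)}\}$ and let $E$ be the partial isometry with $Ee_i^{(1)} = e_i^{(2)}$ and $Ee_i^{(2)} = 0$. A direct expansion using $E^* Q_{1\Omega} = Q_{1\Omega}E = 0$, $E^*E = Q_{1\Omega}$, and $E^*RE = 0$ for $R \in \rad$ reduces the space from Lemma~\ref{sum of blocks is an algebra lemma} to
$$\mc{A}_0 = \cc Q_{1\Omega} + \mc{R}', \qquad \mc{R}' = \{RE : R \in \rad\} \subseteq Q_{1\Omega}\mm_4 Q_{1\Omega} \cong \mm_2,$$
and $\mc{A}_0$ is a unital subalgebra of $\mm_2$ by Lemma~\ref{sum of blocks is an algebra lemma}. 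Since $\dim\mc{R}' = \dim\rad = 2$, one has $\dim\mc{A}_0 \in \{2,3\}$, and the matrix of $RE$ on $\ran Q_{1\Omega}$ coincides with the matrix of $R$ under the natural identification of the two bases.

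Suppose $\dim\mc{A}_0 = 2$; then $Q_{1\Omega} \in \mc{R}'$, so some $R_0 \in \rad$ satisfies $R_0 E = Q_{1\Omega}$, giving $R_0 e_i^{(2)} = e_i^{(1)}$ and thus a matrix for $R_0$ equal to the identity. Writing $\mc{R}' = \cc Q_{1\Omega} + \cc T$ and applying Schur's theorem via a common unitary change of orthonormal basis on $\ran Q_{1\Omega}$ and $\ran Q_{2\Omega}$ that preserves $E$, we may take $T$, and hence every element of $\mc{R}'$, to be upper triangular. Then $\langle R e_1^{(2)}, e_2^{(1)}\rangle = 0$ for every $R \in \rad$, and Lemma~\ref{Case I - no 0 entry in radical lemma} forces either $Re_1^{(2)} = 0$ or $\langle Re_k^{(2)}, e_2^{(1)}\rangle = 0$ for all $k$ and all $R \in \rad$; both fail for $R_0$, whose matrix is the identity, contradicting projection compressibility. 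Hence $\dim\mc{A}_0 = 3$. Because $\mc{A}_0 \neq \mm_2$, Burnside's theorem supplies a one-dimensional invariant subspace; adapting the orthonormal basis of $\ran Q_{1\Omega}$ to begin with the corresponding unit vector (and $\ran Q_{2\Omega}$ in tandem), $\mc{A}_0$ becomes upper triangular and once again $\langle Re_1^{(2)}, e_2^{(1)}\rangle = 0$ for all $R \in \rad$. Lemma~\ref{Case I - no 0 entry in radical lemma} then yields either $\rad \subseteq Q_{1\Omega}\mm_4 Q_2'$ with $Q_2' = e_2^{(2)}(e_2^{(2)})^*$, or $\rad \subseteq Q_1'\mm_4 Q_{2\Omega}$ with $Q_1' = e_1^{(1)}(e_1^{(1)})^*$; each containing space has dimension $2 = \dim\rad$, so equality holds. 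The main obstacle is ruling out $\dim\mc{A}_0 = 2$: one must extract the identity-matrix element $R_0$ hidden in $\mc{R}'$ and verify that neither branch of Lemma~\ref{Case I - no 0 entry in radical lemma} can accommodate it.
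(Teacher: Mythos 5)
Your proof is correct and follows essentially the same route as the paper: both arguments compress via the partial isometry $E$ and Lemma~\ref{sum of blocks is an algebra lemma} to get the subalgebra $\cc Q_{1\Omega}+\rad E$ of $Q_{1\Omega}\mm_4Q_{1\Omega}$, use Burnside/triangularization plus Lemma~\ref{Case I - no 0 entry in radical lemma} to force a permanent zero row or column when this subalgebra is proper, and invoke Lemma~\ref{particular algebra with 3 dimensional radical is not compressible lemma} to exclude $\dim\rad=3$. Your organization by $\dim\rad$ (with the SVD treatment of the one-dimensional case and the explicit elimination of the $\dim\mc{A}_0=2$ subcase) is more granular than the paper's two-way split on whether $\cc Q_{1\Omega}+\rad E$ is proper, but the tools and the substance are the same.
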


\begin{proof}

Let $\Omega$ be a pair in $\mc{F}_I(\mc{A})$ as above, and assume that $Q_{1\Omega}$ and $Q_{2\Omega}$ are linked. By the observations following Theorem~\ref{every algebra is similar to an unhinged algebra}, $\mc{A}=\cc I\dotplus \rad$.
	
	For each $i\in\{1,2\}$, let $\left\{e_1^{(i)},e_2^{(i)}\right\}$ be a fixed orthonormal basis for $\ran(Q_{i\Omega})$. Furthermore, let $E\in\mm_n$ denote the partial isometry satisfying $Ee_i^{(1)}=e_i^{(2)}$ and $Ee_i^{(2)}=0$ for each $i\in\{1,2\}$. By Lemma~\ref{sum of blocks is an algebra lemma}, $$\mc{A}_0\coloneqq \cc Q_{1\Omega}+\rad E$$ is a subalgebra of $Q_{1\Omega}\mm_4Q_{1\Omega}$. If this subalgebra $\mc{A}_0$ is proper, then by Burnside's Theorem, we may change the orthonormal basis for $\ran(Q_{1\Omega})$ if required and assume that $\langle Ae_1^{(1)},e_2^{(1)}\rangle=0$ for all $A\in\mc{A}_0$. In this case we may adjust the orthonormal basis for $\ran(Q_{2\Omega})$ accordingly and assume that $\langle Re_1^{(2)},e_2^{(1)}\rangle=0$ for all $R\in\rad.$ Thus, by Lemma~\ref{Case I - no 0 entry in radical lemma}, either $\langle Re_1^{(2)},e_1^{(1)}\rangle=0$ for all $R\in\rad$, or $\langle Re_2^{(2)},e_2^{(1)}\rangle=0$ for all ${R\in\rad}$. The fact that $\rad$ has the required form now follows from Theorem~\ref{structure of modules over Mn}.
	
	Suppose instead that $\cc Q_{1\Omega}+\rad E$ is equal to $Q_{1\Omega}\mm_4Q_{1\Omega}$. It follows that $\rad$ is at least $3$-dimensional.  If $\dim\rad=3$, then $\mc{A}$ is of the form described in Lemma~\ref{particular algebra with 3 dimensional radical is not compressible lemma}, and hence $\mc{A}$ is not projection compressible. We therefore have that $\dim\rad=4$, so $\rad=Q_{1\Omega}\mm_4Q_{2\Omega}$.
\end{proof}
\smallskip

	\begin{thm}\label{big thm for linked type I algebras}
	Let $\mc{A}$ be a projection compressible type I subalgebra of $\mm_n$, and let $\Omega=(d,\bigoplus_{i=1}^n\mc{V}_i)$ be a pair in $\mc{F}_I\mc{(A)}$. If $Q_{1\Omega}$ and $Q_{2\Omega}$ are linked, then there are projections $Q_1^\prime\leq Q_{1\Omega}$ and $Q_2^\prime\leq Q_{2\Omega}$ such that $\rad=Q_1^\prime\mm_nQ_2^\prime$. Thus, $\mc{A}$ is the unitization of an $\mc{LR}$-algebra, so $\mc{A}$ is idempotent compressible.
	
	\end{thm}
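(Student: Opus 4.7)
I plan to proceed by strong induction on $n$. The base case $n=4$ is handled by Lemma \ref{Case I - n_1=2 case for linked algebras lemma} when $d=2$ and by Proposition \ref{prop k0=2 or k0=n} when $d\in\{1,3\}$. For the inductive step with $n\geq 5$, Proposition \ref{prop k0=2 or k0=n} permits the assumption $2\leq d\leq n-2$. Denote by $\mc{W}_1\subseteq\ran(Q_{1\Omega})$ and $\mc{W}_2\subseteq\ran(Q_{2\Omega})$ the left and right supports of $\rad$ (viewed as a space of linear maps $\ran(Q_{2\Omega})\to\ran(Q_{1\Omega})$), and by $Q_1'\leq Q_{1\Omega}$ and $Q_2'\leq Q_{2\Omega}$ the corresponding orthogonal projections; the containment $\rad\subseteq Q_1'\mm_nQ_2'$ is automatic, so the task reduces to establishing the reverse inclusion.

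A key preliminary is a ``zero-pattern rigidity'' extracted from Lemma \ref{Case I - no 0 entry in radical lemma}: for unit vectors $u\in\ran(Q_{1\Omega})$ and $v\in\ran(Q_{2\Omega})$, applying the lemma to any orthonormal bases that extend $u$ and $v$ shows that $u^*Rv=0$ for every $R\in\rad$ if and only if $u\in\mc{W}_1^\perp$ or $v\in\mc{W}_2^\perp$. If either $Q_1'<Q_{1\Omega}$ or $Q_2'<Q_{2\Omega}$, I would pick a unit vector in the orthogonal complement of the deficient support and compress by $P=I-vv^*$; the corresponding row or column of each $R\in\rad$ is already zero, so the radical of $P\mc{A}P$ coincides with $\rad$. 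The compression is projection compressible, type I, and linked in $\mm_{n-1}$, so the inductive hypothesis (or Proposition \ref{prop k0=2 or k0=n} in the edge case where the smaller side drops to $1$) yields the required product form for $\rad$.

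In the full-support case, replacing $\mc{A}$ by $\mc{A}^{aT}$ if necessary, I may assume $d\leq n-d$. When $d<n-d$: for each rank-$d$ subprojection $P_2\leq Q_{2\Omega}$, the compression $(Q_{1\Omega}+P_2)\mc{A}(Q_{1\Omega}+P_2)$ is projection compressible, type I, and linked in $\mm_{2d}$; since $2d<n$, the inductive hypothesis combined with the zero-pattern rigidity forces its radical to be all of $\mathrm{Hom}(\ran(P_2),\ran(Q_{1\Omega}))$. Letting $P_2$ range over all rank-$d$ subprojections then establishes $d$-transitivity of $\rad$ as a space of linear maps from $\ran(Q_{2\Omega})$ to $\ran(Q_{1\Omega})$, so Theorem \ref{DMR Theorem} gives $\dim\rad\geq d(n-d)$ and hence equality.

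The remaining subcase $d=n/2$ is the principal obstacle, since the natural compression strategy no longer reduces the ambient dimension. Following the template of Lemma \ref{Case I - n_1=2 case for linked algebras lemma}, I would form $\mc{A}_0=\cc Q_{1\Omega}+\rad E$ using the partial isometry $E$ of Lemma \ref{sum of blocks is an algebra lemma}; Burnside's Theorem combined with the zero-pattern rigidity forces $\mc{A}_0$ to be all of $Q_{1\Omega}\mm_nQ_{1\Omega}$, so $\dim\rad\in\{d^2-1,d^2\}$. Ruling out the codimension-one possibility is the hardest step and requires generalizing Lemma \ref{particular algebra with 3 dimensional radical is not compressible lemma} to arbitrary $d$: writing $\rad=\{R:\mathrm{Tr}(\Gamma^*R)=0\}$ for some $\Gamma\in Q_{1\Omega}\mm_nQ_{2\Omega}$ that (by full support) necessarily has no zero rows or columns, one would explicitly construct an orthogonal projection $P$ not commuting with $Q_{1\Omega}$ whose corner $P\mc{A}P$ contains a matrix whose square escapes $P\mc{A}P$, contradicting projection compressibility. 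Once $\dim\rad=d^2$ is secured, $\rad=Q_{1\Omega}\mm_nQ_{2\Omega}$ and the induction is complete.
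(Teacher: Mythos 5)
Your overall architecture coincides with the paper's: induction on $n$ with the base case supplied by Proposition~\ref{prop k0=2 or k0=n} and Lemma~\ref{Case I - n_1=2 case for linked algebras lemma}, dimension reduction whenever $\rad$ has a permanent zero row or column, $d$-transitivity plus Theorem~\ref{DMR Theorem} when $d<n-d$, and Lemma~\ref{sum of blocks is an algebra lemma} with Burnside's Theorem when $d=n/2$. Your repackaging of Lemma~\ref{Case I - no 0 entry in radical lemma} as a support/rigidity statement is correct and lets you front-load the reduction to the full-support case, which slightly streamlines both remaining subcases relative to the paper's presentation.

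The gap is exactly where you locate it: ruling out $\dim\rad=d^2-1$ when $d=n/2$. You do not carry this step out, and the route you propose is problematic on two counts. First, the supporting claim that full support of $\rad$ forces the dual matrix $\Gamma$ (where $\rad=\{R:\mathrm{Tr}(\Gamma^*R)=0\}$) to have no zero rows or columns is false: for $d\geq 3$, take $\Gamma=e_1^{(1)}\otimes e_1^{(2)*}+e_2^{(1)}\otimes e_2^{(2)*}$, so that $\rad$ consists of all $R$ with $\langle Re_1^{(2)},e_1^{(1)}\rangle+\langle Re_2^{(2)},e_2^{(1)}\rangle=0$; this space contains $e_i^{(1)}\otimes v^*$ and $u\otimes e_j^{(2)*}$ for all $i,j\geq 3$ and hence has full left and right support, yet $\Gamma$ has $d-2$ zero rows and columns. (Note also that when $\Gamma$ has rank at least $2$, no entry of $\rad$ vanishes in any basis, so the rigidity observation gives no contradiction here.) Second, Lemma~\ref{particular algebra with 3 dimensional radical is not compressible lemma} is a hand-crafted $4\times 4$ computation built around one specific projection, and you give no indication of how it generalizes to arbitrary $d$. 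The paper sidesteps the issue entirely with a positive argument rather than a contradiction: since $\cc Q_{1\Omega}+\rad E=Q_{1\Omega}\mm_nQ_{1\Omega}$ forces at most one linearly dependent entry, located on the diagonal of $\rad E$, it reorders the bases to place that entry $\gamma$ at the $(1,d-1)$ position, takes the explicit projection $\frac{1}{2}P$ from Lemma~\ref{Case I - unlinked, same size lemma} and the element $A$ with $t_{11}=t_{d,d-1}=1$ and all other parameters zero, and computes $(PAP)^2=8\,e_1^{(1)}\otimes e_{d-1}^{(2)*}$; membership of this rank-one matrix in $P\mc{A}P$ shows $\gamma$ is free, whence $\dim\rad=d^2$. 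As written, your proof is incomplete at this step.
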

	
	\begin{proof}
	
	We will proceed by induction on $n$. By definition of a type I algebra, our base case occurs when $n=4$. That said, let $\mc{A}$ be a projection compressible type I subalgebra of $\mm_4$, and suppose that $\Omega=(d,\bigoplus_{i=1}^n\mc{V}_i)$ is a pair in $\mc{F}_I(\mc{A})$ with $Q_{1\Omega}$ linked to $Q_{2\Omega}$. If $d=1$ or $d=3$, then Proposition~\ref{prop k0=2 or k0=n} guarantees that $\rad$ admits the required form. If instead $d=2$, then $\mc{A}$ and $\Omega$ are as in Lemma~\ref{Case I - n_1=2 case for linked algebras lemma}. Once again $\rad$ is of the correct form.

	Now fix an integer $N\geq 5$. Assume that for every positive integer $n<N$, if $\mc{A}$ is a projection compressible type~I subalgebra of $\mm_n$ and $\Omega$ is a pair in $\mc{F}_I\mc{(A)}$ with $Q_{1\Omega}$ linked to $Q_{2\Omega}$, then $\rad=Q_1^\prime\mm_n Q_2^\prime$ for some subprojections $Q_1^\prime\leq Q_{1\Omega}$ and $Q_2^\prime\leq Q_{2\Omega}$. We claim that this is also the case for every such subalgebra $\mc{A}$ of $\mm_N$ and pair $\Omega\in\mc{F}_I(\mc{A})$. 
Indeed, fix a subalgebra $\mc{A}$ of $\mm_N$ and pair $\Omega=(d,\bigoplus_{i=1}^N\mc{V}_i)$ in $\mc{F}_I(\mc{A})$ as in the statement of the theorem. If $d=1$ or $d=N-1$, then Proposition~\ref{prop k0=2 or k0=n} ensures that $\rad$ is of the desired form. Thus, we will assume that $1<d<N-1$. By replacing $\mc{A}$ with $\mc{A}^{aT}$ if necessary, we will also assume that $d\leq N-d$. 

First consider the case that $N$ is even and $d=N-d=N/2$. Fix orthonormal bases $\left\{e_1^{(1)},e_2^{(1)},\ldots, e_{d}^{(1)}\right\}$ and $\left\{e_1^{(2)},e_2^{(2)},\ldots, e_{d}^{(2)}\right\}$ for $\ran(Q_{1\Omega})$ and $\ran(Q_{2\Omega})$, respectively. Let $E\in\mm_n$ denote the partial isometry satisfying $Ee_i^{(1)}=e_i^{(2)}$ and $Ee_i^{(2)}=0$ for each $i\in\{1,2,\ldots, d\}$. Arguing as in the proof of Lemma~\ref{Case I - unlinked, same size lemma}, either $\cc Q_{1\Omega}+\rad E$ is equal to $Q_{1\Omega}\mm_NQ_{1\Omega}$, or Burnside's Theorem may be used to assume that  
$$\langle Re_1^{(2)},e_d^{(1)}\rangle=0\,\,\text{for all}\,\, R\in\rad.\smallskip$$
\indent \,If the latter holds, then by Lemma~\ref{Case I - no 0 entry in radical lemma}, $\rad$ contains a permanent row or column of zeros. In the case of a permanent row of zeros, consider the algebra $\mc{A}_0$ obtained be deleting this row and its corresponding column from $\mc{A}$. We have that $\mc{A}_0$ is a projection compressible type I subalgebra of $\mm_{N-1}$, so $Rad(\mc{A}_0)$ admits the the required form by the inductive hypothesis. Upon reintroducing the removed row and column, one can see that $\rad$ is also of the required form. An analogous argument can be made in the case of a permanent column of zeros. We may therefore assume that $\cc Q_{1\Omega}+\rad E=Q_{1\Omega}\mm_NQ_{1\Omega}$. 

Since $\rad E$ can be enlarged to a $d^2$-dimensional space by adding $\cc Q_{1\Omega}$, $\dim\rad\geq d^2-1$. We claim that in fact, $\dim\rad=d^2$, and hence $\rad=Q_{1\Omega}\mm_n Q_{2\Omega}$. To see this is the case, reorder the bases for $\ran(Q_{1\Omega})$ and $\ran(Q_{2\Omega})$ if necessary to assume that with respect to the decomposition
	$$\cc^N=\vee\left\{e_1^{(1)},e_2^{(1)},\ldots, e_{d-1}^{(1)}\right\}\oplus \vee\left\{e_{d}^{(1)},e_1^{(2)}\right\}\oplus \vee\left\{e_2^{(2)},e_3^{(2)},\ldots, e_{d}^{(2)}\right\},$$
	each $A\in\mc{A}$ can be expressed as a matrix of the form 
$$A=\left[\begin{array}{ccccc|cc|ccccc}
	\alpha & & & & & 0 & t_{11} & t_{12} & \cdots & t_{1,d-2} & \gamma & t_{1d}\\
	& \alpha & & & & 0 & t_{21} & t_{22} & \cdots & t_{2,d-2} & t_{2,d-1} & t_{2d}\\ 
	&  & \alpha & & & 0 & t_{31} & t_{32} & \cdots & t_{3,d-2} & t_{3,d-1} & t_{3d}\\
	& & & \ddots & & \vdots & \vdots & \vdots & \ddots & \vdots & \vdots & \vdots\\
	 &&&& \alpha & 0 & t_{d-1,1} & t_{d-1,2} & \cdots & t_{d-1,d-2} & t_{d-1,d-1} & t_{d-1,d}\\ \hline
	 &&&&&\alpha & t_{d1} & t_{d2} & \cdots & t_{d,d-2} & t_{d,d-1} & t_{dd}\\
	 &&&&&&\alpha & 0 & \cdots & 0 & 0 & 0 \\ \hline
	 &&&&&&&\alpha\\
	 &&&&&&&&\ddots\\
	 &&&&&&&&&\alpha\\
	 &&&&&&&&&&\alpha\\
	 &&&&&&&&&&&\alpha	   
	\end{array}\right].$$
	Here, $\alpha$ and $t_{ij}$ are arbitrary values in $\cc$, and $\gamma$ may depend linearly on these entries. 
	
	It will be shown that $\gamma$ is in fact, independent of the other terms. Indeed, let $P$ denote the matrix from the proof of Lemma~\ref{Case I - unlinked, same size lemma}, so that $\frac{1}{2}P$ is a projection in $\mm_N$. Proceed now as in the proof of that lemma by noting that with $A$ as above, $PAP$ is given by $$\left[\begin{array}{ccccc|cc|ccccc}
	4\alpha & & & & & 2t_{11} & 2t_{11} & 4t_{12} & \cdots & 4t_{1,d-2} & 4\gamma & 4t_{1d}\\
	& 4\alpha & & & & 2t_{21} & 2t_{21} & 4t_{22} & \cdots & 4t_{2,d-2} & 4t_{2,d-1} & 4t_{2d}\\ 
	&  & 4\alpha & & & 2t_{31} & 2t_{31} & 4t_{32} & \cdots & 4t_{3,d-2} & 4t_{3,d-1} & 4t_{3d}\\
	& & & \ddots & & \vdots & \vdots & \vdots & \ddots & \vdots & \vdots & \vdots\\
	 &&&& 4\alpha & 2t_{d-1,1} & 2t_{d-1,1} & 4t_{d-1,2} &\cdots & 4t_{d-1,d-2} & 4t_{d-1,d-1} & 4t_{d-1,d}\\ \hline
	 &&&&&2\alpha+t_{d1} & 2\alpha+t_{d1} & 2t_{d2} & \cdots & 2t_{d,d-2} & 2t_{d,d-1} & 2t_{dd}\\
	 &&&&&2\alpha+t_{d1} & 2\alpha+t_{d1} & 2t_{d2} & \cdots & 2t_{d,d-2} & 2t_{d,d-1} & 2t_{dd}\\ \hline
	 &&&&&&&4\alpha\\
	 &&&&&&&&\ddots\\
	 &&&&&&&&&4\alpha\\
	 &&&&&&&&&&4\alpha\\
	 &&&&&&&&&&&4\alpha	   
	\end{array}\right].$$ It therefore suffices to prove that  $e_1^{(1)}\otimes e_{d-1}^{(2)*}$ belongs to $P\mc{A}P$. But if $A$ denotes the particular element of $\mc{A}$ obtained by taking $t_{11}=t_{d,d-1}=1$ and $\alpha=t_{ij}=0$ for all other indices $i$ and $j$, then $$(PAP)^2=8e_1^{(1)}\otimes e_{d-1}^{(2)*}.\smallskip$$ Since $\mc{A}$ is projection compressible, this element belongs to $P\mc{A}P$. We conclude that $\rad=Q_{1\Omega}\mm_NQ_{2\Omega}$, and hence the proof of the $d=N-d$ case is complete.

Let us now turn to the case in which $d< N-{d}$. As above, let $\left\{e_1^{(1)},e_2^{(1)},\ldots, e_{n_{1\Omega}}^{(1)}\right\}$ and $\left\{e_1^{(2)},e_2^{(2)},\ldots, e_{n_{2\Omega}}^{(2)}\right\}$ be fixed orthonormal bases for $\ran(Q_{1\Omega})$ and $\ran(Q_{2\Omega})$, respectively. For each linearly independent $d$-element subset $S$ of $\ran(Q_{2\Omega})$, let $Q_S$ denote the orthogonal projection onto the span of $S$, and define $P_S\coloneqq Q_{1\Omega}+Q_S$. Let $\mc{A}_S$ denote the compression $P_S\mc{A}P_S$, which we regard as a subalgebra of $\cc I\dotplus Q_{1\Omega}\mm_{2d}Q_S$. 

If each compression $\mc{A}_S$ is equal to $\cc I\dotplus Q_{1\Omega}\mm_{2d}Q_S$, then $\rad$ is a $d$-transitive space of linear maps from $\ran(Q_{2\Omega})$ into $\ran(Q_{1\Omega})$. In this case we may apply Theorem~\ref{DMR Theorem} to conclude that $\rad=Q_{1\Omega}\mm_NQ_{2\Omega}$, as desired. Instead, suppose that one of the sets $S$ is such that the radical of $\mc{A}_S$ is properly contained in $Q_{1\Omega}\mm_{2d} Q_S$. For such an $S$, the inductive hypothesis gives rise to subprojections $Q_1^\prime\leq Q_{1\Omega}$ and $Q_S^\prime\leq Q_S$ such that $$Rad(\mc{A}_S)=Q_1^\prime\mm_{2d}Q_S^\prime.\smallskip$$ At least one of these subprojections must be proper.

	If $Q_S^\prime\neq Q_S$ or $Q_1^\prime=0$, then there is an orthonormal basis for $\cc^{2d}$ with respect to which $Rad(\mc{A}_S)$ has a permanent column of zeros. One may then extend this basis to an orthonormal basis for $\cc^N$ with respect to which $\rad$ also admits a permanent column of zeros. By deleting this column and its corresponding row from $\mc{A}$, we obtain a projection compressible type I subalgebra of $\mm_{N-1}$. The inductive hypothesis then implies that the radical of this compression is of the desired form. Upon reintroducing the column and row deleted from $\mc{A}$, it is easy to see that $\rad$ is of the desired form as well.
	
	On the other hand, if $Q_S=Q_S^\prime$ and $Q_1^\prime$ is a proper non-zero subprojection of $Q_{1\Omega}$, then it must be the case that $Rad(\mc{A}_S)$ has a permanent row of zeros, but not a permanent column of zeros. Thus, $\rad$ has a permanent  row of zeros by Lemma~\ref{Case I - no 0 entry in radical lemma}. By removing this row and its corresponding column from $\mc{A}$, we obtain a projection compressible type~I subalgebra of $\mm_{N-1}$. The radical of this algebra is of the correct form by the inductive hypothesis, and hence so too is $\rad$.	
	\end{proof}
\smallskip

\section[5]{Algebras of Type II}\label{case2}

	The term \textit{type II} will be used to describe a unital subalgebra $\mc{A}$ of $\mm_n$, $n\geq 4$, that has a reduced block upper triangular form with respect to an orthogonal decomposition $\bigoplus_{i=1}^m\mc{V}_i$ of $\cc^n$, such that $\dim\mc{V}_k\geq 2$ for some $k$. For example, the algebra from Example~\ref{exmp:families of compressible algebras:1} is of type II if and only if $\rank(Q_2)\geq 2$. It follows from this definition that every type II algebra satisfies the assumptions of Corollary~\ref{special case of uniqueness corollary}. 
	
	The purpose of this section is to classify the type II algebras that afford the projection compression property. It will be shown that every projection compressible algebra of type~II is either the unitization of an $\mc{LR}$-algebra, or is unitarily equivalent to the type II algebra from Example~\ref{exmp:families of compressible algebras:1}. 

As in the case of type I algebras, it will be helpful to keep a record of all orthogonal decompositions of $\cc^n$ that satisfy the conditions of Corollary~\ref{special case of uniqueness corollary} for a given type II algebra $\mc{A}$. Thus, we make the following definition.\smallskip

\begin{defn}\label{definition of F2(A)}
If $\mc{A}$ is an algebra of type II, let $\mc{F}_{II}=\mc{F}_{II}\mc{(A)}$ denote the set of triples $\Omega=(d,k,\bigoplus_{i=1}^m\mc{V}_i)$ that satisfy the following conditions:

\begin{itemize}

		\item[(i)]$\bigoplus_{i=1}^m\mc{V}_i$ is an orthogonal decomposition of $\cc^n$ with respect to which $\mc{A}$ is reduced block upper triangular;\smallskip
		
		\item[(ii)]$d$ and $k$ are integers such that $d\geq 2$, $k\in\{1,2,\ldots, m\}$, and $\dim\mc{V}_k=d$.\smallskip \vspace{0.1cm}

	\end{itemize}
\end{defn}

\begin{notation}
\upshape{If $\mc{A}$ is an algebra of type II and $\Omega$ is a triple in $\mc{F}_{II}(\mc{A})$, let $Q_{1\Omega}$, $Q_{2\Omega}$, and $Q_{3\Omega}$ denote the orthogonal projections onto $\bigoplus_{i<k}\mc{V}_i$, $\mc{V}_k$, and $\bigoplus_{i>k}\mc{V}_i$, respectively. Furthermore, for each $i\in\{1,2,3\}$, let $n_{i\Omega}$ denote the rank of $Q_{i\Omega}$. \smallskip
}
\end{notation}

	Observe that if $\mc{A}$ is a projection compressible type II subalgebra of $\mm_n$ and $\mc{F}_{II}(\mc{A})$ contains a triple $\Omega=(d,\bigoplus_{i=1}^m\mc{V}_i)$, then Corollary~\ref{special case of uniqueness corollary} implies that $Q_{2\Omega}\mc{A}Q_{2\Omega}=Q_{2\Omega}\mm_n Q_{2\Omega}$ and $Q_{i\Omega}\mc{A}Q_{i\Omega}=\cc Q_{i\Omega}$ for each $i\in\{1,3\}$. In this case, $n_{1\Omega}=k-1$, $n_{2\Omega}=d$, and $n_{3\Omega}=n-d-k+1$.
	
	We will begin by considering the extreme case of a type~II algebra $\mc{A}$ such that $\mc{F}_{II}(\mc{A})$ contains a triple $\Omega=(d,k,\bigoplus_{i=1}^m\mc{V}_i)$ with $k=1$ or $k=m$. The projection compressible algebras of this form can be easily identified using Theorem~\ref{structure of modules over Mn}.

	\begin{prop}\label{case2 - k=1 or k=m prop}

		Let $\mc{A}$ be a projection compressible type II subalgebra of $\mm_n$. If there exists a triple \linebreak $\Omega=(d,k,\bigoplus_{i=1}^m\mc{V}_i)$ in $\mc{F}_{II}(\mc{A})$ with $k=1$ or $k=m$, then $\mc{A}$ is the unitization of an $\mc{LR}$-algebra. Consequently, $\mc{A}$ is idempotent compressible.
	
	\end{prop}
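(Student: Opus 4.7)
The plan is to settle the case $k = 1$ directly and then deduce $k = m$ by an anti-transpose symmetry argument. Since the class of unitizations of $\mc{LR}$-algebras is invariant under similarity, Theorem~\ref{every alg is similar to BD plus rad theorem} lets me replace $\mc{A}$ with a similar \emph{unhinged} algebra from the outset, so throughout I may assume $\mc{A} = BD(\mc{A}) \dotplus \rad$.

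Assume $k = 1$. By Corollary~\ref{special case of uniqueness corollary}, $Q_{2\Omega}\mc{A}Q_{2\Omega} = Q_{2\Omega}\mm_n Q_{2\Omega}$ and $Q_{3\Omega}\mc{A}Q_{3\Omega} = \cc Q_{3\Omega}$. The latter equality, together with the definition of reduced block upper triangular form and unitality of $\mc{A}$, forces $\dim\mc{V}_i = 1$ for every $i \geq 2$ (since each $\mc{A}_{ii}$ must equal $\cc I_{\mc{V}_i}$ yet, by irreducibility, is constrained to be either $\{0\}$ or the full matrix algebra on $\mc{V}_i$) and simultaneously kills every strictly upper triangular entry inside the $Q_{3\Omega}$-block. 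Theorem~\ref{Cor 14 from LMMR}(ii) precludes any link between $\mc{V}_1$ (dimension $d \geq 2$) and any one-dimensional $\mc{V}_i$, so Theorem~\ref{Cor 14 from LMMR}(iii) delivers $BD(\mc{A}) = Q_{2\Omega}\mm_n Q_{2\Omega} \dotplus \cc Q_{3\Omega}$, and the preceding observations pin down $\rad \subseteq Q_{2\Omega}\mm_n Q_{3\Omega}$.

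Next I will verify that $\rad$ is a left $Q_{2\Omega}\mm_n Q_{2\Omega}$-module. Because unhinging places all of $Q_{2\Omega}\mm_n Q_{2\Omega}$ inside $BD(\mc{A}) \subseteq \mc{A}$, the product $XR$ lies in $\mc{A}$ whenever $X \in Q_{2\Omega}\mm_n Q_{2\Omega}$ and $R \in \rad$; being strictly block upper triangular, $XR$ then belongs to $\rad$. Viewing $Q_{2\Omega}\mm_n Q_{3\Omega}$ as $\mm_{d \times (n-d)}$, Theorem~\ref{structure of modules over Mn}(i) furnishes a subprojection $Q_3' \leq Q_{3\Omega}$ with $\rad = Q_{2\Omega}\mm_n Q_3'$. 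A routine rewriting of the block-diagonal part (absorbing the scalar component of the $Q_{2\Omega}$-block into the scalar on $Q_{3\Omega}$) then identifies $\mc{A}$ with $\cc I + Q_{2\Omega}\mm_n(Q_{2\Omega} + Q_3')$, the unitization of an $\mc{LR}$-algebra, and idempotent compressibility is inherited from the known compressibility of $\mc{LR}$-algebras and their unitizations.

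The case $k = m$ reduces to the case just handled by replacing $\mc{A}$ with its anti-transpose $\mc{A}^{aT}$: the anti-transpose preserves projection compressibility, converts a triple with $k = m$ into one with $k = 1$ (reversing the order of the subspaces in the orthogonal decomposition), and preserves the class of unitizations of $\mc{LR}$-algebras, so the conclusion for $\mc{A}^{aT}$ transfers back to $\mc{A}$. The only delicate point I anticipate is the careful reading of the reduced block upper triangular hypothesis in Step~1 needed to simultaneously pin $\rad$ inside $Q_{2\Omega}\mm_n Q_{3\Omega}$ and identify $BD(\mc{A})$; once this is in hand, the module structure theorem closes the argument cleanly.
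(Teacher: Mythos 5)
Your argument is essentially the mirror image of the paper's proof: the paper reduces to $k=m$ via the anti-transpose, observes that $\rad$ is contained in $Q_{1\Omega}\mm_nQ_{2\Omega}$ (by Corollary~\ref{special case of uniqueness corollary}) and is therefore a right $\mm_d$-module, then invokes Theorem~\ref{structure of modules over Mn} and finally Theorem~\ref{every alg is similar to BD plus rad theorem}; you reduce to $k=1$ and run the left-module version of the same argument. The substance is identical and the approach is sound. There is, however, one logical wrinkle to repair: you unhinge \emph{first} (replacing $\mc{A}$ by a similar algebra $S^{-1}\mc{A}S$) and only afterwards invoke Corollary~\ref{special case of uniqueness corollary}. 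That corollary's hypothesis is projection compressibility, which is preserved under unitary equivalence but \emph{not} under general similarity, so it cannot be applied to the unhinged copy directly. The fix is immediate: apply the corollary to the original $\mc{A}$ to obtain $Q_{3\Omega}\mc{A}Q_{3\Omega}=\cc Q_{3\Omega}$ (hence $\dim\mc{V}_i=1$ for $i\geq 2$ and $\rad\subseteq Q_{2\Omega}\mm_nQ_{3\Omega}$), and then check that these corner conditions survive the block upper triangular similarity $S$. Alternatively, note that the left-module property of $\rad$ already holds for the original algebra without unhinging --- since $k=1$ forces $AQ_{2\Omega}=Q_{2\Omega}AQ_{2\Omega}$ for every $A\in\mc{A}$, the ideal property of $\rad$ gives $XR\in\rad$ for all $X\in Q_{2\Omega}\mc{A}Q_{2\Omega}=Q_{2\Omega}\mm_nQ_{2\Omega}$ --- and postpone the unhinging to the final identification of $\mc{A}$, exactly as the paper does.
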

	
	\begin{proof}
	
		Let $\Omega\in\mc{F}_{II}(\mc{A})$ be as in the statement above. By replacing $\mc{A}$ with $\mc{A}^{aT}$ if necessary, we may assume that $k=m$. Furthermore, since any algebra similar to an $\mc{LR}$-algebra is again an $\mc{LR}$-algebra, we may assume that $\mc{A}$ is unhinged with respect to $\bigoplus_{i=1}^m\mc{V}_i$. 
		
		Since $\rad$ is a right $\mm_d$-module, Theorem~\ref{structure of modules over Mn} indicates that $\rad=Q_1^\prime\mm_{n}Q_2$ for some projection $Q_1^\prime\leq Q_{1\Omega}$. It follows that, $$\mc{A}=BD(\mc{A})\dotplus\rad=(Q_1^\prime+Q_{2\Omega})\mm_nQ_{2\Omega}+\cc I,\smallskip$$
		and hence $\mc{A}$ is the unitization of an $\mc{LR}$-algebra.
	\end{proof}	
	\smallskip
	
	By Proposition~\ref{case2 - k=1 or k=m prop}, it suffices to consider the type II algebras for which the triples $\Omega=(d,k,\bigoplus_{i=1}^m\mc{V}_i)$ in $\mc{F}_{II}$ are such that $1<k<m$. For such an algebra $\mc{A}$ and triple $\Omega$, the projections $Q_{1\Omega}$, $Q_{2\Omega}$, and $Q_{3\Omega}$ are all non-zero. In the language of Theorem~\ref{Cor 14 from LMMR} and the remarks that follow, the corners $Q_{1\Omega}\mc{A}Q_{1\Omega}$ and $Q_{3\Omega}\mc{A}Q_{3\Omega}$ are diagonal algebras, each comprised of mutually linked $1\times 1$ blocks. Note that the blocks in $Q_{1\Omega}\mc{A}Q_{1\Omega}$ may be linked to those in $Q_{3\Omega}\mc{A}Q_{3\Omega}$. If this is the case, we will say that $Q_{1\Omega}$ and $Q_{3\Omega}$ are \textit{linked}. Otherwise, we will say that $Q_{1\Omega}$ and $Q_{3\Omega}$ are \textit{unlinked}. In either case, dimension considerations imply that neither $Q_{1\Omega}$ nor $Q_{3\Omega}$ is linked to $Q_{2\Omega}$. As in our analysis of type~I algebras, it will be important to distinguish between these settings. 

The following lemma concerns the independence of the blocks in the radical of an algebra in reduced block upper triangular form, and will play a key role in our study of type~II algebras.

\begin{lem}\label{middle block unlinked implies good radical lem}

	Let $n$ be a positive integer, and let $\mc{A}$ be a unital subalgebra of $\mm_n$ in reduced block upper triangular form with respect to a decomposition $\bigoplus_{i=1}^m \mc{V}_i$ of $\cc^n$. Suppose that there is an index $k$, $1<k<m$, that is unlinked from all indices $i\neq k$. Let $Q_1, Q_2,$ and $Q_3$ denote the orthogonal projections onto $\bigoplus_{i<k}\mc{V}_i$, $\mc{V}_k$, and $\bigoplus_{i>k}\mc{V}_i$, respectively, and assume that $Q_1\rad Q_1=Q_3\rad Q_3=\{0\}.$
	
	\begin{itemize}
	
		\item[(i)]For every $R\in\rad$, there are elements $R^\prime=Q_1R^\prime$ and $R^{\prime\prime}=R^{\prime\prime}Q_3$ in $\rad$ such that $$R^\prime Q_2=Q_1R Q_2\,\,\,\,\,\text{and}\,\,\,\,\,Q_2R^{\prime\prime}=Q_2RQ_3.$$
		\item[(ii)]If there exist projections $Q_1^\prime\leq Q_1$ and $Q_3^\prime\leq Q_3$ such that $$Q_1\rad Q_2=Q_1^\prime\mm_nQ_2,\,\,\,\,\,Q_2Rad(\mc{A})Q_3=Q_2\mm_nQ_3^\prime,$$ and $$Q_1\rad Q_3=Q_1^\prime\rad Q_3^\prime$$
then $$Rad(\mc{A})=Q_1^\prime\mm_n Q_2\dotplus Q_1^\prime\mm_n Q_3^\prime\dotplus Q_2\mm_nQ_3^\prime.$$

	\end{itemize}

\end{lem}

\begin{proof}
For (i), let $R$ belong to $\rad$. Since $\mc{V}_k$ is unlinked from all other spaces $\mc{V}_i$, there is an element $A\in\mc{A}$ such that $Q_1 AQ_1=Q_3AQ_3=0$ and $Q_2AQ_2=Q_2$. Thus, with respect to the decomposition $\cc^n=\ran(Q_1)\oplus\ran(Q_2)\oplus\ran(Q_3)$, $A$ and $R$ may be expressed as 
	$$A=\begin{bmatrix}
	0 & A_{12} & A_{13}\\
	0 & I & A_{23}\\
	0 & 0 & 0
	\end{bmatrix}\,\,\,\,\text{and}\,\,\,\,R=\begin{bmatrix}
	0 & R_{12} & R_{13}\\
	0 & 0 & R_{23}\\
	0 & 0 & 0
	\end{bmatrix}\smallskip$$
	for some $A_{ij}$ and $R_{ij}$. It is then easy to see that $R^\prime\coloneqq RA$ and $R^{\prime\prime}\coloneqq AR$ define elements of $\rad$ that satisfy the requirements of (i).
	
	For (ii), let $M_1$ and $M_2$ denote arbitrary elements of $Q_1^\prime\mm_nQ_2$ and $Q_2\mm_nQ_3^\prime,$ respectively. By (i), there are elements $S_1$ and $S_2$ in $Q_1\mm_n Q_3$ such that $M_1+S_1$ and $M_2+S_2$ belong to $\rad$. Moreover, since $Q_1\rad Q_3=Q_1^\prime\rad Q_3^\prime$, we have that $S_1$ and $S_2$ are contained in $Q_1^\prime\mm_n Q_3^\prime$. 
	
	Observe that $R\coloneqq (M_1+S_1)(M_2+S_2)$ belongs to $\rad$. With respect to the decomposition of $\cc^n$ described above, this element can be expressed as 
	$$R=\begin{bmatrix}
	0 & M_1 & S_1\\
	0 & 0 & 0\\
	0 & 0 & 0
	\end{bmatrix}\begin{bmatrix}
	0 & 0 & S_2\\
	0 & 0 & M_2\\
	0 & 0 & 0
	\end{bmatrix}=\begin{bmatrix}
	0 & 0 & M_1M_2\\
	0 & 0 & 0\\
	0 & 0 & 0
	\end{bmatrix}.\smallskip$$
	But since $M_1$ and $M_2$ were arbitrary, this implies that $Q_1^\prime\mm_nQ_3^\prime\subseteq\rad.$ In particular, $\rad$ contains $S_1$ and $S_2$. It then follows that $M_1$ and $M_2$ belong to $\rad$ as well. We conclude that $\rad$ contains $Q_1^\prime\mm_n Q_2$ and $Q_2\mm_n Q_3^\prime$, as $M_1$ and $M_2$ were arbitrary.\end{proof}
\smallskip

Notably, if $\mc{A}$ is a type~II algebra for which the triples $\Omega=(d,k,\bigoplus_{i=1}^m\mc{V}_i)$ in $\mc{F}_{II}$ are such that $1<k<m$, then $Q_{2\Omega}$ is necessarily unlinked from $Q_{1\Omega}$ and $Q_{3\Omega}$, and hence $\mc{A}$ satisfies the assumptions of Lemma~\ref{middle block unlinked implies good radical lem}.

\subsection{Type II Algebras with Unlinked Projections}\label{Subsection: Type II algebras unlinked}
	
	Let us first consider the type II algebras $\mc{A}$ for which the triples $\Omega=(d,k,\bigoplus_{i=1}^m\mc{V}_i)$ in $\mc{F}_{II}(\mc{A})$ are such that $Q_{1\Omega}$ and $Q_{3\Omega}$ are unlinked. We aim to show that the only such algebras with the projection compression property are those that are unitarily equivalent to the type II algebra in Example~\ref{exmp:families of compressible algebras:1}. To accomplish this goal, we will first show in Lemma~\ref{case II unlinked algebras base case} that the result holds in the $\mm_4$ setting. An extension to larger type II algebras will be made in Theorem~\ref{big thm for unlinked type II algebras} by applying Lemma~\ref{case II unlinked algebras base case} to their $4\times 4$ compressions.


\begin{lem}\label{case II unlinked algebras base case}
Let $\mc{A}$ be a projection compressible type II subalgebra of $\mm_4$. Assume that $\mc{F}_{II}(\mc{A})$ contains a pair $\Omega=(d,k,\bigoplus_{i=1}^3\mc{V}_i)$ such that $d=k=2$. If $Q_{1\Omega}$ and $Q_{3\Omega}$ are unlinked, then $\mc{A}$ is unitarily equivalent to 
$$\cc Q_{1\Omega}+\cc Q_{3\Omega}+(Q_{1\Omega}+Q_{2\Omega})\mm_4(Q_{2\Omega}+Q_{3\Omega}),\smallskip$$ the type II algebra from Example~\ref{exmp:families of compressible algebras:1}. Consequently, $\mc{A}$ is idempotent compressible.

\end{lem}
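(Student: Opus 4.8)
The plan is to pin down $\rad$ and show it is the full strictly block upper triangular space, after which the stated form follows mechanically. Write blocks of sizes $1,2,1$ corresponding to $\ran(Q_{1\Omega})$, $\ran(Q_{2\Omega})$, $\ran(Q_{3\Omega})$. By Corollary~\ref{special case of uniqueness corollary} the diagonal corners are $Q_{1\Omega}\mc A Q_{1\Omega}=\cc Q_{1\Omega}$, $Q_{2\Omega}\mc A Q_{2\Omega}=Q_{2\Omega}\mm_4 Q_{2\Omega}$, and $Q_{3\Omega}\mc A Q_{3\Omega}=\cc Q_{3\Omega}$; since $Q_{2\Omega}$ is unlinked from $Q_{1\Omega}$ and $Q_{3\Omega}$, Lemma~\ref{middle block unlinked implies good radical lemma} applies. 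As $Q_{1\Omega}\rad Q_{2\Omega}$ is a right $Q_{2\Omega}\mm_4 Q_{2\Omega}$-module and $Q_{2\Omega}\rad Q_{3\Omega}$ a left module, Theorem~\ref{structure of modules over Mn} yields subprojections $Q_1'\le Q_{1\Omega}$ and $Q_3'\le Q_{3\Omega}$ with $Q_{1\Omega}\rad Q_{2\Omega}=Q_1'\mm_4 Q_{2\Omega}$ and $Q_{2\Omega}\rad Q_{3\Omega}=Q_{2\Omega}\mm_4 Q_3'$; because $Q_{1\Omega}$ and $Q_{3\Omega}$ have rank one, each of $Q_1',Q_3'$ is $0$ or the full projection. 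The whole problem reduces to showing $Q_1'=Q_{1\Omega}$ and $Q_3'=Q_{3\Omega}$: then the third hypothesis of Lemma~\ref{middle block unlinked implies good radical lemma}(ii) holds trivially and forces $\rad=Q_{1\Omega}\mm_4 Q_{2\Omega}\dotplus Q_{1\Omega}\mm_4 Q_{3\Omega}\dotplus Q_{2\Omega}\mm_4 Q_{3\Omega}$. Since any semisimple complement then differs from its block-diagonal part by a strictly-upper element of $\rad$, this gives $\mc A=\cc Q_{1\Omega}+\cc Q_{3\Omega}+(Q_{1\Omega}+Q_{2\Omega})\mm_4(Q_{2\Omega}+Q_{3\Omega})$ on the nose, with no further unitary needed.

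The crux is therefore to rule out $Q_{1\Omega}\rad Q_{2\Omega}=\{0\}$ (the case $Q_{2\Omega}\rad Q_{3\Omega}=\{0\}$ being identical after replacing $\mc A$ by $\mc A^{aT}$, which reverses the block order and preserves projection compressibility). By Theorem~\ref{every alg is similar to BD plus rad theorem} the $(1,2)$-block of the semisimple part has coboundary form $\alpha c-cX$ for a fixed row vector $c\in Q_{1\Omega}\mm_4 Q_{2\Omega}$, where $\alpha$ and $X=(X_{ij})$ denote the block-$1$ and block-$2$ diagonal entries. If $c=0$, then $Q_{1\Omega}\mc A Q_{2\Omega}=\{0\}$, so the orthogonal rank-two projections $P_1\coloneqq Q_{2\Omega}$ and $P_2\coloneqq Q_{1\Omega}+Q_{3\Omega}$ satisfy $P_2\mc A P_1=\{0\}$, while $P_1\mc A P_1=Q_{2\Omega}\mm_4 Q_{2\Omega}\neq\cc P_1$ and (blocks $1$ and $3$ being unlinked) $P_2\mc A P_2\neq\cc P_2$; this contradicts Theorem~\ref{At most one non-scalar corner theorem}. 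If instead $c\neq 0$, then after a unitary acting on $\ran(Q_{2\Omega})$ (Theorem~\ref{svd lem}) I may assume $c=(\gamma,0)$ with $\gamma>0$, so that the $(1,2)$- and $(1,3)$-entries of a general $A\in\mc A$ are $\gamma(\alpha-X_{11})$ and $-\gamma X_{12}$.

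For this remaining case I would test the explicit projection $\tfrac12 Q$, where $Q=\left[\begin{smallmatrix}1&0&0&1\\0&2&0&0\\0&0&2&0\\1&0&0&1\end{smallmatrix}\right]$ is the projection from Theorem~\ref{At most one non-scalar corner theorem}, which merges the block-$1$ and block-$3$ diagonal directions while fixing block $2$. Writing compressions in the basis $\{(e_1^{(1)}+e_1^{(3)})/\sqrt2,\,e_1^{(2)},\,e_2^{(2)}\}$ of $\ran(Q)$, a direct computation shows every $B\in Q\mc A Q$ obeys the fixed linear relation $B_{13}=-\tfrac{\gamma}{\sqrt2}B_{23}$, both entries being governed solely by $X_{12}$; squaring breaks it, since $(B^2)_{13}+\tfrac{\gamma}{\sqrt2}(B^2)_{23}$ is a nonzero multiple of $B_{23}(\alpha-a-\beta-\gamma b_1)$, where $a,b_1$ are the $(1,3)$- and $(2,3)$-type entries. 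Choosing $A$ with $X_{12}\neq 0$ and $\alpha-a-\beta-\gamma b_1\neq 0$ then gives $(QAQ)^2\notin Q\mc A Q$, contradicting projection compressibility. The main obstacle is precisely this last selection: verifying that the semisimple values forced on $a$ and $b_1$ cannot conspire to keep $\alpha-a-\beta-\gamma b_1$ identically zero as the free parameters vary. I expect to settle this by varying $\alpha$ and $\beta$ and again invoking the coboundary form of the off-diagonal semisimple entries. Having excluded both cases, $Q_1'=Q_{1\Omega}$ and $Q_3'=Q_{3\Omega}$, the reduction of the first paragraph delivers the stated algebra, and idempotent compressibility follows from Example~\ref{exmp:families of compressible algebras}\ref{exmp:families of compressible algebras:1}.
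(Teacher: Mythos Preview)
Your overall strategy matches the paper's: reduce via Lemma~\ref{middle block unlinked implies good radical lemma} and Theorem~\ref{structure of modules over Mn} to ruling out $Q_{1\Omega}\rad Q_{2\Omega}=\{0\}$ (the other side by anti-transpose), and then contradict projection compressibility. Your handling of the $c=0$ case is correct and is exactly the paper's final step (reorder the basis and invoke Theorem~\ref{At most one non-scalar corner theorem}). The reduction in your first paragraph is also fine.

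The $c\neq 0$ case, however, is genuinely incomplete, and the gap you flag yourself is real: a single projection is not enough. Your $Q$ is precisely the paper's $P_2$, and compressing by it alone does \emph{not} yield a contradiction, because projection compressibility already forces a hidden linear relation among $a_{14},a_{24},a_{11},a_{44}$ that can kill your obstruction. Concretely, the paper first compresses by a different projection $P_0$ (mixing $e_1^{(2)}$ with $e_1^{(3)}$ rather than $e_1^{(1)}$ with $e_1^{(3)}$) and extracts from $(P_0AP_0)^2\in P_0\mc A P_0$ the relation $a_{14}=t(a_{11}-a_{44}-a_{24})$ for all $A$; this is exactly the ``conspiracy'' you worry about, and it \emph{does} hold whenever $\mc A$ is projection compressible. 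With that relation in hand, the paper then uses two further projections $P_1,P_2$ (your $Q$ is $P_2$; $P_1$ is its sign-flipped variant), computes the squared obstruction for a specific $A_0$ under each, and takes the \emph{difference} of the two resulting expressions to isolate $16t^2$. Only this forces $t=0$, after which one is back in your $c=0$ case. So the skeleton of your argument is right, but closing the $c\neq 0$ case requires at least two more well-chosen projections and the subtraction trick; a direct attack with your single $Q$ will not separate $t$ from the other free parameters.
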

	
	\begin{proof}
	
		Suppose to the contrary that $\mc{A}$ is not unitarily equivalent to the algebra described above. Lemma~\ref{middle block unlinked implies good radical lem} (ii) then implies that $$\begin{array}{rl}
		Q_{1\Omega}\rad Q_{2\Omega}\neq Q_{1\Omega}\mm_4 Q_{2\Omega}\phantom{.} & \text{or}\vspace{0.2cm}\\
		Q_{2\Omega}\rad Q_{3\Omega}\neq Q_{2\Omega}\mm_4 Q_{3\Omega}.\end{array}\smallskip$$
		\noindent By replacing $\mc{A}$ with $\mc{A}^{aT}$ if necessary, we may assume that $Q_{1\Omega}\rad Q_{2\Omega}\neq Q_{1\Omega}\mm_4 Q_{2\Omega}$. Consequently, $Q_{1\Omega}\rad Q_{2\Omega}=\{0\}$ by Theorem~\ref{structure of modules over Mn}.
		
		An application of Theorem~\ref{every algebra is similar to an unhinged algebra} provides a precise description of $Q_{1\Omega}\mc{A}Q_{2\Omega}$. Since $\mc{A}$ is similar to $BD(\mc{A})\dotplus\rad$ via a block upper triangular similarity, there is a fixed element $T\in Q_{1\Omega}\mm_4 Q_{2\Omega}$ such that 
		$$Q_{1\Omega}AQ_{2\Omega}=(Q_{1\Omega}AQ_{1\Omega})T-T(Q_{2\Omega}AQ_{2\Omega})\,\,\,\text{for every}\,\,A\in\mc{A}.\smallskip$$
		For each $i\in\{1,2,3\}$, fix an orthonormal basis $\left\{e_1^{(i)},e_2^{(i)},\ldots, e_{n_{i\Omega}}^{(i)}\right\}$ for $\ran(Q_{i\Omega})$. To simplify matters, we may use Theorem~\ref{svd lem} and the remarks that follow to assume that $\langle Te_2^{(2)},e_1^{(1)}\rangle=0.$ That is, with respect to the basis $\left\{e_1^{(1)},e_1^{(2)},e_2^{(2)},e_1^{(3)}\right\}$ for $\cc^4$, each $A\in\mc{A}$ may be expressed as
		$$A=\left[\begin{array}{c|cc|c}
		a_{11} & a_{11}t-ta_{22} & -ta_{23} & a_{14}\\ \hline
		& a_{22} & a_{23} & a_{24}\\
		& a_{32} & a_{33} & a_{34}\\ \hline
		& & & a_{44}
		\end{array}\right],\smallskip$$
where $a_{ij}\in\cc$ and $t\coloneqq \langle Te_1^{(2)},e_1^{(1)}\rangle$. Here, the entries on the block-diagonal may be selected arbitrarily.

To reach a contradiction, consider the matrices 
$$P_0\coloneqq \begin{bmatrix}
2 & 0 & 0 & 0\\
0 & 1 & 0 & 1\\
0& 0 & 2 & 0\\
0& 1 & 0 & 1
\end{bmatrix}\,,\,\,\,P_1\coloneqq \begin{bmatrix}
\phantom{-}1 & 0 & 0 & -1\\
\phantom{-}0 & 2 & 0 & \phantom{-}0\\
\phantom{-}0 & 0 & 2 & \phantom{-}0\\
-1 & 0 & 0 & \phantom{-}1
\end{bmatrix}\,,\,\,\,\text{and}\,\,\,P_2\coloneqq \begin{bmatrix}
1 & 0 & 0 & 1\\
0 & 2 & 0 & 0\\
0 & 0 & 2 & 0\\
1 & 0 & 0 & 1
\end{bmatrix}.\smallskip$$
Observe that for each $i$, $\frac{1}{2}P_i$ is a projection in $\mm_4$. Through direct computation, one may verify that $$\langle Be_2^{(2)},e_1^{(1)}\rangle+2t\langle Be_2^{(2)},e_1^{(2)}\rangle=0\,\,\text{for all}\,\,B\in P_0\mc{A}P_0.\smallskip$$ Yet with $A$ as above and $B_0\coloneqq (P_0AP_0)^2$, we have
$$\langle B_0 e_2^{(2)},e_1^{(1)}\rangle +2t\langle B_0 e_2^{(2)},e_1^{(2)}\rangle=8a_{23}\left(a_{14}-t(a_{11}-a_{44}-a_{24})\right).\smallskip$$ It follows that $a_{23}=0$ for all $A\in\mc{A}$ or $a_{14}=t(a_{11}-a_{44}-a_{24})$ for all $A\in\mc{A}$. Indeed, it is clear that every element of $A$ must satisfying at least one of these equations. But if $\mc{A}$ contained an operator $A_1$ satisfying only the first equation and an operator $A_2$ satisfying only the second, then neither equation would hold for $A_1+A_2$. Since $a_{23}$ may be selected arbitrarily, it must be that $a_{14}=t(a_{11}-a_{44}-a_{24})$ for every $A\in\mc{A}$. 

One may now derive similar relations using $P_1$ and $P_2$. Indeed, it is straightforward to check that for $j\in\{1,2\}$, the equation
$$t\langle P_jAP_je_2^{(2)},e_1^{(2)}\rangle+2\langle P_jAP_j e_2^{(2)},e_1^{(1)}\rangle=0\smallskip$$ holds for every $A\in\mc{A}$. Yet if $A_0$ denotes any element of $\mc{A}$ of the above form satisfying $a_{11}=a_{23}=1$ and $a_{44}=0$, then for $B_j\coloneqq (P_jA_0P_j)^2$,
$$
\left(t\langle B_1e_2^{(2)},e_1^{(2)}\rangle+\right.\left.2\langle B_1 e_2^{(2)},e_1^{(1)}\rangle\right)-\left(t\langle B_2e_2^{(2)},e_1^{(2)}\rangle +2\langle B_2 e_2^{(2)},e_1^{(1)}\rangle\right)=16t^2.\smallskip
$$
Since $B_1$ and $B_2$ belong to $P_1\mc{A}P_1$ and $P_2\mc{A}P_2$, respectively, we conclude  that $t=0$. That is, $Q_{1\Omega}\mc{A}Q_{2\Omega}=\{0\}$. It follows that with respect to the basis $\left\{e_1^{(2)},e_2^{(2)},e_1^{(1)},e_1^{(3)}\right\}$ for $\cc^4$, each $A\in\mc{A}$ may be written as 
 $$A=\left[\begin{array}{cc|c|c}
 a_{22} & a_{23} & 0 & a_{24}\\
 a_{32} & a_{33} & 0 & a_{34}\\ \hline
 & & a_{11} & 0\\ \hline
 & & & a_{44}
 \end{array}\right]\smallskip$$ for some $a_{ij}\in\cc$. Theorem~\ref{At most one non-scalar corner theorem} now demonstrates that $\mc{A}$ is not projection compressible, as the entries in $BD(\mc{A})$ may be chosen arbitrarily. This is a contradiction.
	\end{proof}
\smallskip		
	
\begin{thm}\label{big thm for unlinked type II algebras}

	Let $\mc{A}$ be a projection compressible type II subalgebra of $\mm_n$, and assume that there is a triple $\Omega=(d,k,\bigoplus_{i=1}^m\mc{V}_i)$ in $\mc{F}_{II}(\mc{A})$ with $1<k<m$. If $Q_{1\Omega}$ and $Q_{3\Omega}$ are unlinked, then $\mc{A}$ is unitarily equivalent to
	$$\cc Q_{1\Omega}+\cc Q_{3\Omega}+(Q_{1\Omega}+Q_{2\Omega})\mm_n(Q_{2\Omega}+Q_{3\Omega}),\smallskip$$
	the type II algebra from Example~\ref{exmp:families of compressible algebras:1}. Consequently, $\mc{A}$ is idempotent compressible.

\end{thm}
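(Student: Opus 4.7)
The plan is to reduce to the $\mm_4$ base case of Lemma~\ref{case II unlinked algebras base case} by compressing $\mc{A}$ to rank-four subprojections, globalize the resulting corner equalities via Theorem~\ref{structure of modules over Mn}, and conclude by combining Lemma~\ref{middle block unlinked implies good radical lemma}(ii) with a dimension count after passing to the unhinged form.

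First, for each rank-one orthogonal subprojection $p\leq Q_{1\Omega}$, rank-two orthogonal subprojection $q\leq Q_{2\Omega}$ (available since $d\geq 2$), and rank-one orthogonal subprojection $r\leq Q_{3\Omega}$ (available since $1<k<m$), I would set $P\coloneqq p+q+r$. Then $P\mc{A}P$ is projection compressible, and its diagonal corners are $\cc p$, $q\mm_nq\cong\mm_2$, and $\cc r$ by Corollary~\ref{special case of uniqueness corollary}; hence $P\mc{A}P$ is a unital type~II subalgebra of $\mm_4$ with middle block of size two. The projections $p$ and $r$ are unlinked in $P\mc{A}P$: by Theorem~\ref{Cor 14 from LMMR}(i) one can find $G\in\mc{A}$ with $Q_{1\Omega}GQ_{1\Omega}=Q_{1\Omega}$ and $Q_{3\Omega}GQ_{3\Omega}=0$, and then $PGP$ witnesses the unlinkedness. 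Lemma~\ref{case II unlinked algebras base case} applies, and since the only basis changes used in its proof are the block-diagonal ones supplied by Theorem~\ref{svd lem}, its conclusion reads, in the original basis, as $p\mc{A}q=p\mm_nq$, $q\mc{A}r=q\mm_nr$, and $p\mc{A}r=p\mm_nr$.

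To globalize, I would observe that $Q_{1\Omega}\mc{A}Q_{2\Omega}$ is a right module over $Q_{2\Omega}\mc{A}Q_{2\Omega}=\mm_d$, so Theorem~\ref{structure of modules over Mn} yields $Q_{1\Omega}\mc{A}Q_{2\Omega}=Q_1'\mm_nQ_{2\Omega}$ for some $Q_1'\leq Q_{1\Omega}$. If $Q_1'$ were proper, any rank-one $p\leq Q_{1\Omega}-Q_1'$ would give $p\mc{A}q=p(Q_1'\mm_nQ_{2\Omega})q=0$, contradicting Step~1; hence $Q_{1\Omega}\mc{A}Q_{2\Omega}=Q_{1\Omega}\mm_nQ_{2\Omega}$. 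The symmetric argument gives $Q_{2\Omega}\mc{A}Q_{3\Omega}=Q_{2\Omega}\mm_nQ_{3\Omega}$.

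To finish, I would apply Theorem~\ref{every alg is similar to BD plus rad theorem} to obtain a block-upper-triangular similarity $S$ with $\mc{A}_0\coloneqq S^{-1}\mc{A}S=BD(\mc{A})\dotplus\rad$. Since $S$ has invertible diagonal blocks, a direct calculation shows $Q_{i\Omega}\mc{A}_0Q_{j\Omega}=Q_{i\Omega}\mm_nQ_{j\Omega}$ for $(i,j)\in\{(1,2),(2,3)\}$, and because $\mc{A}_0$ is unhinged these equal $Q_{i\Omega}\rad Q_{j\Omega}$. Lemma~\ref{middle block unlinked implies good radical lemma}(ii), applied with $Q_1'=Q_{1\Omega}$ and $Q_3'=Q_{3\Omega}$, then forces $\rad=Q_{1\Omega}\mm_nQ_{2\Omega}\dotplus Q_{1\Omega}\mm_nQ_{3\Omega}\dotplus Q_{2\Omega}\mm_nQ_{3\Omega}$, so $\dim\mc{A}=\dim BD(\mc{A})+\dim\rad$ matches the dimension of the bowtie $\mc{B}\coloneqq\cc Q_{1\Omega}+\cc Q_{3\Omega}+(Q_{1\Omega}+Q_{2\Omega})\mm_n(Q_{2\Omega}+Q_{3\Omega})$. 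Because $\mc{A}$ is block upper triangular with the diagonal corners prescribed by Corollary~\ref{special case of uniqueness corollary}, $\mc{A}\subseteq\mc{B}$, and the dimension match forces $\mc{A}=\mc{B}$; idempotent compressibility then follows from Example~\ref{exmp:families of compressible algebras}(i). The main obstacle is this final step: the corner $Q_{1\Omega}\mc{A}Q_{3\Omega}$ resists direct compression (the rank-four compression only captures a one-dimensional slice of it), so its fullness must be obtained indirectly via Lemma~\ref{middle block unlinked implies good radical lemma}(ii) and the dimension count rather than from a direct compression argument.
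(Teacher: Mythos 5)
Your overall architecture (reduce to the $\mm_4$ base case of Lemma~\ref{case II unlinked algebras base case}, globalize via Theorem~\ref{structure of modules over Mn}, finish with Lemma~\ref{middle block unlinked implies good radical lemma}(ii)) is in the same spirit as the paper's proof, but there is a genuine gap: throughout Steps 1--3 you conflate the corner $Q_{1\Omega}\mc{A}Q_{2\Omega}$ with the corner of the radical $Q_{1\Omega}\rad Q_{2\Omega}$, and these can differ. By Theorem~\ref{every alg is similar to BD plus rad theorem} the $(1,2)$ corner of a generic $A\in\mc{A}$ has the form $\alpha T-TM+R_{12}$ with $T$ fixed, $M=Q_{2\Omega}AQ_{2\Omega}$ arbitrary in $\mm_d$, and $R_{12}\in Q_{1\Omega}\rad Q_{2\Omega}$; the hinge term $T\mm_d$ can fill out $p\mc{A}q=p\mm_nq$ and even $Q_{1\Omega}\mc{A}Q_{2\Omega}=Q_{1\Omega}\mm_nQ_{2\Omega}$ while $Q_{1\Omega}\rad Q_{2\Omega}$ remains a proper subspace $Q_1'\mm_nQ_{2\Omega}$. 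Consequently your Step 3 assertion --- that $Q_{1\Omega}\mc{A}_0Q_{2\Omega}=Q_{1\Omega}\mm_nQ_{2\Omega}$ for the unhinged algebra $\mc{A}_0=S^{-1}\mc{A}S$ because ``$S$ has invertible diagonal blocks'' --- is false: the unhinging similarity is designed exactly to strip away the $T\mm_d$ contribution, leaving only $Q_{1\Omega}\rad Q_{2\Omega}$. (A toy example: for $\mc{A}=\left\{\left[\begin{smallmatrix}\alpha & \beta-\alpha\\ 0 & \beta\end{smallmatrix}\right]\right\}$ the $(1,2)$ corner of $\mc{A}$ is all of $\cc$, yet after conjugating by $\left[\begin{smallmatrix}1&1\\0&1\end{smallmatrix}\right]$ it is $\{0\}$.) The hypotheses of Lemma~\ref{middle block unlinked implies good radical lemma}(ii) with $Q_1'=Q_{1\Omega}$, $Q_3'=Q_{3\Omega}$ are therefore not established, and the dimension count collapses.

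The paper's proof is built precisely to avoid this trap. It argues by contradiction: if $\mc{A}$ is not the stated algebra, Lemma~\ref{middle block unlinked implies good radical lemma}(ii) forces, say, $Q_{1\Omega}\rad Q_{2\Omega}=Q_1'\mm_nQ_{2\Omega}$ with $Q_1'$ \emph{proper}; one then chooses the rank-four compression so that its first basis vector lies in $\ran(Q_{1\Omega}-Q_1')$, where the radical contributes nothing to the $(1,2)$ corner. After diagonalizing $T$ via Theorem~\ref{svd lem}, the compressed algebra is seen to have its $(1,2)$ entries linearly determined by the diagonal, so it cannot be the eleven-dimensional algebra that Lemma~\ref{case II unlinked algebras base case} demands --- contradiction. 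If you want to keep your direct formulation, you would need to extract from the base case lemma the statement $p\,Rad(P\mc{A}P)\,q=p\mm_nq$ (not merely $p\mc{A}q=p\mm_nq$) and then relate $Rad(P\mc{A}P)$ to $P\rad P$ in the presence of the hinge term $T$; that relation is exactly the nontrivial content your argument skips.
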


\begin{proof}
	Suppose to the contrary that $\mc{A}$ is not unitarily equivalent to the algebra described above. As in the proof of the previous result, we may appeal to Lemma~\ref{middle block unlinked implies good radical lem} (ii) and assume without loss of generality that $Q_{1\Omega}\rad Q_{2\Omega}\neq Q_{1\Omega}\mm_n Q_{2\Omega}.$ Thus, Theorem~\ref{structure of modules over Mn} gives rise to a proper subprojection $Q_1^\prime$ of $Q_{1\Omega}$ satisfying $$Q_{1\Omega}\rad Q_{2\Omega}=Q_1^\prime \mm_nQ_{2\Omega}.$$ 
	
	Define $Q_1^{\prime\prime}\coloneqq Q_{1\Omega}-Q_1^\prime$, and let $\left\{e_1^{(1)},e_2^{(1)},\ldots,e_{n_{1\Omega}}^{(1)}\right\}$ be an orthonormal basis for $\ran(Q_{1\Omega})$ such that 
	$$\ran(Q_1^{\prime\prime})=\vee\left\{e_1^{(1)},e_2^{(1)},\ldots, e_{\ell}^{(1)}\right\}\smallskip$$ 
	for some integer $1\leq\ell\leq n_{1\Omega}$. Since $\mc{A}$ is similar to $BD(\mc{A})\dotplus\rad$ via a matrix that is block upper triangular with respect to $\cc^n=\ran(Q_{1\Omega})\oplus\ran(Q_{2\Omega})\oplus\ran(Q_{3\Omega})$, there is an operator $T\in Q_1^{\prime\prime}\mm_nQ_{2\Omega}$ such that $$\begin{array}{cc}Q_1^{\prime\prime}AQ_{2\Omega}=(Q_1^{\prime\prime}AQ_1^{\prime\prime})T-T(Q_{2\Omega}AQ_{2\Omega}) & \text{for all}\,\,A\in\mc{A}.\end{array}
	\smallskip$$
	By Theorem~\ref{svd lem}, one may choose a suitable orthonormal basis $\left\{e_1^{(2)}, e_2^{(2)}, \ldots, e_{n_{2\Omega}}^{(2)}\right\}$ for $\ran(Q_{2\Omega})$ and adjust the basis for $\ran(Q_1^{\prime\prime})$ if necessary to impose additional structure on $T$. Specifically, one may assume that $\langle Te_j^{(2)},e_i^{(1)}\rangle=0$ whenever $i\neq j$.

	 Let $e_1^{(3)}$ be any non-zero vector in $\ran(Q_{3\Omega})$, and define $\mc{B}=\left\{e_1^{(1)},e_1^{(2)},e_2^{(2)},e_1^{(3)}\right\}.$ Let $P$ denote the orthogonal projection onto the span of $\mc{B}$, and consider the compression $\mc{A}_0\coloneqq P\mc{A}P$. It is easy to see that $\mc{A}_0$ is a projection compressible type II subalgebra of $\mm_4$. Moreover, if $$\begin{array}{cccc}\mc{W}_1\coloneqq \cc e_1^{(1)}, & \mc{W}_2\coloneqq \vee\left\{e_1^{(2)},e_2^{(2)}\right\},& \text{and} & \mc{W}_3\coloneqq \cc e_1^{(3)},\end{array}\smallskip$$ then the triple $\Omega^\prime=(2,2,\bigoplus_{i=1}^3\mc{W}_i)$ belongs to $\mc{F}_{II}(\mc{A}_0)$. Since $Q_{1\Omega^\prime}$ and $Q_{3\Omega^\prime}$ are unlinked, $\mc{A}_0$ is among the class of algebras addressed in Lemma~\ref{case II unlinked algebras base case}. With respect to the basis $\mc{B}$ for $\ran(P)$, however, every element of $\mc{A}_0$ may be expressed as a matrix of the form 
	 $$A=\left[\begin{array}{c|cc|c}
	 a_{11} & a_{11}t-ta_{22} & -ta_{23} & a_{14}\\ \hline
	 & a_{22} & a_{23} & a_{24}\\
	 & a_{32} & a_{33} & a_{34}\\ \hline
	 & & & a_{44}
	 \end{array}\right],$$
	 where $t\coloneqq \langle Te_1^{(2)},e_1^{(1)}\rangle$. Since $\mc{A}_0$ is not of the form prescribed by Lemma~\ref{case II unlinked algebras base case}, it follows that $\mc{A}_0$ is not projection compressible---a contradiction.
\end{proof}
\smallskip

\subsection{Type II Algebras with Linked Projections}\label{Subsection: Type II algebras linked}
Consider now the type II algebras $\mc{A}$ for which the triples $\Omega=(d,k,\bigoplus_{i=1}^m\mc{V}_i)$ in $\mc{F}_{II}(\mc{A})$ are such that $Q_{1\Omega}$ and $Q_{3\Omega}$ are linked. It will be shown in Theorem~\ref{case II - linked case implies LR theorem} that all projection compressible algebras of this form are unitizations of $\mc{LR}$-algebras. The proof of this result requires a careful analysis of the upper triangular blocks in the semi-simple part of the algebra. The following lemma is the crux of this analysis.

\begin{lem}\label{extending incomplete radical lemma}

Let $\mc{A}$ be a projection compressible type II subalgebra of $\mm_4$. Assume that $\mc{F}_{II}(\mc{A})$ contains a triple $\Omega=(d,k,\bigoplus_{i=1}^3\mc{V}_i)$ with $d=k=2$, and such that $Q_{1\Omega}$ and $Q_{3\Omega}$ are linked. 
\begin{itemize}
\item[(i)] If there are a constant $t\in\cc$ and for each $i\in\{1,2,3\}$, an orthonormal basis $\left\{e_1^{(i)},e_2^{(i)},\ldots, e_{n_{i\Omega}}^{(i)}\right\}$ for $\ran(Q_{i\Omega})$ such that
$$\begin{array}{ll}\langle Ae_1^{(2)},e_1^{(1)}\rangle=\phantom{-}t\left(\langle Ae_1^{(1)},e_1^{(1)}\rangle-\langle Ae_1^{(2)},e_1^{(2)}\rangle\right) & and \vspace{0.1cm} \\
\langle Ae_2^{(2)},e_1^{(1)}\rangle= -t\langle Ae_2^{(2)},e_1^{(2)}\rangle\end{array}\smallskip$$
 for all $A\in\mc{A}$, then $\langle Ae_1^{(3)},e_1^{(1)}\rangle=-t\langle Ae_1^{(3)},e_1^{(2)}\rangle$ for all $A\in\mc{A}.$ \smallskip

\item[(ii)] If there are a constant $t\in\cc$ and for each $i\in\{1,2,3\}$, an orthonormal basis $\left\{e_1^{(i)},e_2^{(i)},\ldots, e_{n_{i\Omega}}^{(i)}\right\}$ for $\ran(Q_{i\Omega})$ such that
$$\begin{array}{ll}\langle Ae_1^{(3)},e_1^{(2)}\rangle=t\left(\langle Ae_1^{(2)},e_1^{(2)}\rangle-\langle Ae_1^{(3)},e_1^{(3)}\rangle\right) & and\phantom{-} \vspace{0.1cm} \\
\langle Ae_1^{(3)},e_2^{(2)}\rangle= t\langle Ae_1^{(2)},e_2^{(2)}\rangle\end{array}\smallskip$$
 for all $A\in\mc{A}$, then $\langle Ae_1^{(3)},e_1^{(1)}\rangle=t\langle Ae_1^{(2)},e_1^{(1)}\rangle$ for all $A\in\mc{A}.$
 
 \end{itemize}

\end{lem}

\begin{proof}

	We will begin with the proof of (i). Suppose that there are a constant $t\in\cc$ and for each $i\in\{1,2,3\}$, an orthonormal basis $\left\{e_1^{(i)},e_2^{(i)},\ldots, e_{n_{i\Omega}}^{(i)}\right\}$ for $\ran(Q_{i\Omega})$ as described above. Then with respect to the basis $\left\{e_1^{(1)},e_1^{(2)},e_2^{(2)},e_1^{(3)}\right\}$ for $\cc^4$, each $A\in\mc{A}$ can be written as 
	$$A=\left[\begin{array}{c|cc|c}
	a_{11} & t(a_{11}-a_{22}) & -ta_{23} & a_{14}\\ \hline
	& a_{22} & a_{23} & a_{24}\\
	& a_{32} & a_{33} & a_{34}\\ \hline
	& & & a_{11}
	\end{array}\right]$$
	for some $a_{ij}\in\cc$. Since $\mc{A}$ is in reduced block upper triangular form, the entries on the block-diagonal may be chosen arbitrarily.
	
	Consider the matrix 
	$$P\coloneqq \begin{bmatrix}
	1 & 0 & 0 & 1\\
	0 & 2 & 0 & 0\\
	0 & 0 & 2 & 0\\
	1 & 0 & 0 & 1
	\end{bmatrix},\smallskip$$
	and note that $\frac{1}{2}P$ is a projection in $\mm_4$. One may verify that
	$$t\langle Be_2^{(2)},e_2^{(1)}\rangle+2\langle Be_2^{(2)},e_1^{(1)}\rangle=0\,\,\,\text{for all}\,\,\,B\in P\mc{A}P.\smallskip$$ But with $A$ as above and $B\coloneqq (PAP)^2$, we see that $$t\langle Be_2^{(2)},e_1^{(2)}\rangle+2\langle Be_2^{(2)},e_1^{(1)}\rangle=-8ta_{23}(a_{14}+ta_{24}).\smallskip$$
	The projection compressibility of $\mc{A}$ implies that $B$ belongs to $P\mc{A}P$. Consequently, $ta_{23}(a_{14}+ta_{24})=0$ for all $A\in\mc{A}.$
	
	If $t\neq 0$, then either $a_{23}=0$ for all $A\in\mc{A}$ or $a_{14}=-ta_{24}$ for all $A\in\mc{A}$. Indeed, it is clear that every operator in $\mc{A}$ must satisfy at least one of these equation. If, however, $\mc{A}$ contained an operator $A_1$ satisfying the first equation but not the second, as well as an operator $A_2$ satisfying the second but not the first, then neither equation would hold for $A_1+A_2$. Finally, since $a_{23}$ can be selected arbitrarily, we conclude that either $t=0$ or $a_{14}=-ta_{24}$ for all $A$.

If the latter holds, then every $A\in\mc{A}$ satisfies the equation $\langle Ae_1^{(3)},e_1^{(1)}\rangle=-t\langle Ae_1^{(3)},e_1^{(2)}\rangle,$ as required. If instead $t=0$, then with respect to the basis $\left\{e_1^{(2)},e_2^{(2)},e_1^{(1)},e_1^{(3)}\right\}$ for $\cc^4$, each $A\in\mc{A}$ may be expressed as a matrix of the form 
	$$A=\left[\begin{array}{cc|c|c}
	a_{22} & a_{23} & 0 & a_{24}\\
	a_{32} & a_{33} & 0 & a_{34}\\ \hline
	& & a_{11} & a_{14}\\ \hline
	& & & a_{11}
	\end{array}\right]$$
	for some $a_{ij}\in\cc$.
	It follows from Theorem~\ref{At most one non-scalar corner theorem} that $a_{14}=\langle Ae_1^{(3)},e_1^{(1)}\rangle=0$ for all $A$, and hence the equation $\langle Ae_1^{(3)},e_1^{(1)}\rangle=-t\langle Ae_1^{(3)},e_1^{(2)}\rangle$ holds in this case as well.
	
	In the context of (ii), note that every $A\in\mc{A}$ may be expressed as a matrix of the form 
	$$\left[\begin{array}{c|cc|c}
	a_{11} & a_{12} & a_{13} & a_{14}\\ \hline
	& a_{22} & a_{23} & t(a_{22}-a_{11})\\
	& a_{32} & a_{33} & ta_{32}\\ \hline
	& & & a_{11}
	\end{array}\right]\smallskip$$ with respect to the basis $\left\{e_1^{(1)},e_1^{(2)},e_2^{(2)},e_1^{(3)}\right\}$ for $\cc^4$. Since this matrix is transpose equivalent to 
	$$\left[\begin{array}{c|cc|c}
	a_{11} & t(a_{22}-a_{11}) & ta_{32} & a_{14}\\ \hline
	& a_{22} & a_{32} & a_{12}\\
	& a_{23} & a_{33} & a_{13}\\ \hline
	& & & a_{11}
	\end{array}\right],\smallskip$$
	we conclude from (i) that $a_{14}=ta_{12}$. That is, $\langle A e_1^{(3)},e_1^{(1)}\rangle=t\langle Ae_1^{(2)},e_1^{(1)}\rangle$ for all $A\in\mc{A}$.
\end{proof}
\smallskip

\begin{thm}\label{case II - linked case implies LR theorem}

	Let $\mc{A}$ be a projection compressible type II subalgebra of $\mm_n$, and let $\Omega=(d,k,\bigoplus_{i=1}^m\mc{V}_i)$ be a triple in $\mc{F}_{II}(\mc{A})$. If $Q_{1\Omega}$ and $Q_{3\Omega}$ are linked, then $\mc{A}$ is the unitization of an $\mc{LR}$-algebra. Consequently, $\mc{A}$ is idempotent compressible.

\end{thm}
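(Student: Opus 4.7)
The plan is to show, after replacing $\mc{A}$ by a similar algebra, that $\mc{A} = \cc I + (Q_1^\prime + Q_{2\Omega})\mm_n(Q_{2\Omega} + Q_3^\prime)$ for some subprojections $Q_1^\prime \leq Q_{1\Omega}$ and $Q_3^\prime \leq Q_{3\Omega}$, realizing $\mc{A}$ as the unitization of an $\mc{LR}$-algebra. By Proposition~\ref{case2 - k=1 or k=m prop} I may assume $1 < k < m$, and by Theorem~\ref{every alg is similar to BD plus rad theorem} together with the similarity-invariance of the class of $\mc{LR}$-algebras I may further assume $\mc{A} = BD(\mc{A}) \dotplus \rad$. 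Since $Q_{1\Omega}$ and $Q_{3\Omega}$ are linked with $1\times 1$ diagonal blocks and $Q_{2\Omega}\mc{A}Q_{2\Omega} = Q_{2\Omega}\mm_n Q_{2\Omega}$, this forces $BD(\mc{A}) = \cc(Q_{1\Omega} + Q_{3\Omega}) + Q_{2\Omega}\mm_n Q_{2\Omega}$. The middle-corner module structure together with Theorem~\ref{structure of modules over Mn} then supplies projections $Q_1^\prime \leq Q_{1\Omega}$ and $Q_3^\prime \leq Q_{3\Omega}$ with $Q_{1\Omega}\rad Q_{2\Omega} = Q_1^\prime \mm_n Q_{2\Omega}$ and $Q_{2\Omega}\rad Q_{3\Omega} = Q_{2\Omega}\mm_n Q_3^\prime$.

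The principal step is to establish the remaining hypothesis $Q_{1\Omega}\rad Q_{3\Omega} = Q_1^\prime \rad Q_3^\prime$ of Lemma~\ref{middle block unlinked implies good radical lemma}(ii); setting $Q_1^{\prime\prime} \coloneqq Q_{1\Omega} - Q_1^\prime$ and $Q_3^{\prime\prime} \coloneqq Q_{3\Omega} - Q_3^\prime$, this reduces to showing $Q_1^{\prime\prime}\rad Q_{3\Omega} = 0$ and $Q_{1\Omega}\rad Q_3^{\prime\prime} = 0$. For the former, fix orthonormal bases for the $\ran(Q_{i\Omega})$ compatible with $Q_1^\prime \leq Q_{1\Omega}$, choose any $e_i^{(1)} \in \ran(Q_1^{\prime\prime})$, any $e_q^{(3)} \in \ran(Q_{3\Omega})$, and any two $e_1^{(2)}, e_2^{(2)} \in \ran(Q_{2\Omega})$, and let $P$ be the orthogonal projection onto their joint span. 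Then $P\mc{A}P$ is a projection compressible type II subalgebra of $\mm_4$ with $1\times 1$ outer corners that are linked, middle corner $\mm_2$, and (by the defining property of $Q_1^\prime$ together with $BD(\mc{A})$ having no contribution to the $(1,2)$ entries) $\langle Ae_1^{(2)}, e_i^{(1)}\rangle = \langle Ae_2^{(2)}, e_i^{(1)}\rangle = 0$ for every $A \in \mc{A}$. The ``in particular'' clause of Lemma~\ref{extending incomplete radical lemma} now forces $\langle Ae_q^{(3)}, e_i^{(1)}\rangle = 0$, and varying $i$ and $q$ yields $Q_1^{\prime\prime}\mc{A}Q_{3\Omega} = 0$, whence $Q_1^{\prime\prime}\rad Q_{3\Omega} = 0$. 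Applying the same argument to $\mc{A}^{aT}$ (a projection compressible type II algebra with the roles of $Q_{1\Omega}, Q_{3\Omega}$ interchanged) produces $Q_{1\Omega}\rad Q_3^{\prime\prime} = 0$.

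With both vanishings in hand, Lemma~\ref{middle block unlinked implies good radical lemma}(ii) delivers
$$\rad = Q_1^\prime\mm_n Q_{2\Omega} \,\dotplus\, Q_1^\prime \mm_n Q_3^\prime \,\dotplus\, Q_{2\Omega}\mm_n Q_3^\prime.$$
A direct expansion gives $(Q_1^\prime + Q_{2\Omega})\mm_n(Q_{2\Omega} + Q_3^\prime) = Q_{2\Omega}\mm_n Q_{2\Omega} + \rad$; combining with $BD(\mc{A}) = \cc(Q_{1\Omega} + Q_{3\Omega}) + Q_{2\Omega}\mm_n Q_{2\Omega}$ and absorbing $\cc(Q_{1\Omega} + Q_{3\Omega}) + \cc Q_{2\Omega} = \cc I$ yields $\mc{A} = \cc I + (Q_1^\prime + Q_{2\Omega})\mm_n(Q_{2\Omega} + Q_3^\prime)$, as desired. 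The main obstacle is verifying that the $\mm_4$-compression $P\mc{A}P$ genuinely falls under Lemma~\ref{extending incomplete radical lemma}: one must confirm that reduced block upper triangularity, the linkage of the outer corners, and the zero condition on the prescribed row of the $(1,2)$-block all survive the compression, so that the ``in particular'' case ($t = 0$) may legitimately be invoked.
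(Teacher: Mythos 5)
Your overall architecture matches the paper's (reduce to $1<k<m$, get $Q_1',Q_3'$ from Theorem~\ref{structure of modules over Mn}, kill the corners $Q_1''\mc{A}Q_{3\Omega}$ and $Q_{1\Omega}\mc{A}Q_3''$ via $4\times 4$ compressions and Lemma~\ref{extending incomplete radical lemma}, then finish with Lemma~\ref{middle block unlinked implies good radical lemma}(ii)), but there is a genuine gap at the very first reduction. You invoke Theorem~\ref{every alg is similar to BD plus rad theorem} and the similarity-invariance of $\mc{LR}$-algebras to assume $\mc{A}=BD(\mc{A})\dotplus\rad$. The \emph{conclusion} of the theorem is indeed similarity-invariant, but the \emph{hypothesis} --- projection compressibility --- is not: it is preserved only by unitary equivalence and transposition, and the unhinging similarity of Theorem~\ref{every alg is similar to BD plus rad theorem} is in general a non-unitary block upper triangular matrix. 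After this replacement you can no longer assert that the new algebra is projection compressible, yet every subsequent step (the appeal to the ``in particular'' clause of Lemma~\ref{extending incomplete radical lemma}, which is proved by testing specific projections against the projection compression property) depends on exactly that. This is not a removable technicality: the distinction between hinged and unhinged algebras under \emph{projection} compressibility is the central difficulty of the paper, and the unhinged $t=0$ case of Lemma~\ref{extending incomplete radical lemma} is strictly weaker than what is needed here.

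The paper's proof avoids this by never unhinging until the end. It keeps the hinge matrices $T_1\in Q_1''\mm_nQ_{2\Omega}$ and $T_2\in Q_{2\Omega}\mm_nQ_3''$ explicit, diagonalizes them with Theorem~\ref{svd lem}, and applies Lemma~\ref{extending incomplete radical lemma} with the generally non-zero constants $t=t_{ii}^{(1)}$ (and the analogous $t_{jj}^{(2)}$ after a unitary adjustment) to the honest compressions $P\mc{A}P$ of the original, genuinely projection compressible algebra. This yields the relations $Q_1''AQ_{3\Omega}=-T_1Q_{2\Omega}AQ_{3\Omega}$ and $Q_{1\Omega}AQ_3''=Q_{1\Omega}AQ_{2\Omega}T_2$ rather than the vanishing you claim, and only then does the similarity $S=I-T_1-T_2$ enter, producing an unhinged algebra to which Lemma~\ref{middle block unlinked implies good radical lemma}(ii) applies. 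To repair your argument you would need to either (a) work with nonzero $t$ throughout as the paper does, or (b) prove separately that the unhinging similarity can be taken unitary in this setting --- which is false in general and is precisely what the presence of $T_1,T_2$ obstructs.
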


\begin{proof}
Let $\Omega$ be as above, and assume that $Q_{1\Omega}$ and $Q_{3\Omega}$ are linked. Note that if $k=1$ or $k=m$, then $\mc{A}$ is the unitization of an $\mc{LR}$-algebra by Proposition~\ref{case2 - k=1 or k=m prop}. Thus, we will assume that $1<k<m$. In this case, Theorem~\ref{structure of modules over Mn} gives rise to subprojections $Q_1^\prime\leq Q_{1\Omega}$ and $Q_3^\prime\leq Q_{3\Omega}$ such that $$\begin{array}{rl}Q_{1\Omega}\rad Q_{2\Omega}=Q_1^\prime \mm_n Q_{2\Omega}\phantom{.} & \text{and}\vspace{0.2cm} \\
Q_{2\Omega}\rad Q_{3\Omega}=Q_{2\Omega}\mm_n Q_3^\prime.\end{array}\smallskip$$
Our goal is to show that $\mc{A}$ is similar to $$\mc{A}_0\coloneqq (Q_1^\prime+Q_{2\Omega})\mm_n(Q_{2\Omega}+Q_3^\prime)+\cc I.\smallskip$$ Since $\mc{A}_0$ is the unitization of an $\mc{LR}$-algebra, this will demonstrate that so too is $\mc{A}$. We will accomplish this task by first determining the structure of $Q_{1\Omega}\mc{A}Q_{3\Omega}$.

Define $Q_1^{\prime\prime}\coloneqq Q_{1\Omega}-Q_{1}^\prime$ and $Q_3^{\prime\prime}\coloneqq Q_{3\Omega}-Q_3^\prime$. For each $i\in\{1,2,3\}$, let $\left\{e_1^{(i)},e_2^{(i)},\ldots, e_{n_{i\Omega}}^{(i)}\right\}$ be an orthonormal basis for $\ran(Q_{i\Omega})$ such that if $Q_i^{\prime\prime}\neq 0$, then
$$\ran(Q_i^{\prime\prime})=\vee\left\{e_{1}^{(i)},e_2^{(i)},\ldots, e_{\ell_i}^{(i)}\right\}\smallskip$$ for some $\ell_i\in\{1,2,\ldots, n_{i\Omega}\}$. 
Since $\mc{A}$ is similar to $BD(\mc{A})\dotplus\rad$ via a matrix that is block upper triangular with respect to $\cc^n=\ran(Q_{1\Omega})\oplus\ran(Q_{2\Omega})\oplus\ran(Q_{3\Omega})$, there are operators $T_1\in Q_1^{\prime\prime}\mm_nQ_{2\Omega}$ and $T_2\in Q_{2\Omega}\mm_nQ_3^{\prime\prime}$ such that every $A\in\mc{A}$ satisfies 
$$\begin{array}{rl}Q_1^{\prime\prime}AQ_{2\Omega}=(Q_1^{\prime\prime}AQ_1^{\prime\prime})T_1-T_1(Q_{2\Omega}AQ_{2\Omega})\phantom{.} & \text{and}\vspace{0.2cm} \\
Q_{2\Omega}AQ_3^{\prime\prime}=(Q_{2\Omega}AQ_{2\Omega})T_2-T_2(Q_3^{\prime\prime}AQ_3^{\prime\prime}).\end{array}\smallskip$$ 

We will begin by using Lemma~\ref{extending incomplete radical lemma} to identify the structure of $Q_{1}^{\prime\prime}\mc{A}Q_{3\Omega}$. Of course, there is little to be said when $Q_1^{\prime\prime}=0$, so assume for now that $Q_1^{\prime\prime}\neq 0$. By Theorem~\ref{svd lem} and its subsequent remarks, one may change the orthonormal bases for $\ran(Q_1^{\prime\prime})$ and $\ran(Q_{2\Omega})$ if required and assume that $$t_{ij}^{(1)}\coloneqq \langle T_1e_j^{(2)},e_i^{(1)}\rangle=0\,\,\,\text{for all}\,\,i\neq j.$$

Let $i$ and $i^\prime$ be arbitrary indices from $\{1,2,\ldots,\ell_1\}$ and $\{1,2,\ldots, n_{3\Omega}\}$, respectively. Define $$j=\left\{\begin{array}{ll}
i & \text{if}\,\,i\leq n_{2\Omega},\\
1 & \text{otherwise},\end{array}\right.\smallskip$$ and fix an index $j^\prime\in\{1,2,\ldots,n_{2\Omega}\}\setminus\{j\}$. Let $P$ denote the orthogonal projection onto the span of $\mc{B}\coloneqq \left\{e_{i}^{(1)},e_{j}^{(2)},e_{j^\prime}^{(2)},e_{i^\prime}^{(3)}\right\}$, and consider the algebra $P\mc{A}P$. If $i>n_{2\Omega}$, then for each $A\in\mc{A}$, $PAP$ may be expressed as a matrix of the form 
$$PAP=\left[\begin{array}{c|cc|c}
a_{11} & 0 & 0 & a_{14}\\  \hline
& a_{22} & a_{23} & a_{24}\\
& a_{32} & a_{33} & a_{34}\\ \hline
& & & a_{11}
\end{array}\right]\smallskip$$ with respect to $\mc{B}$. In this case, $P\mc{A}P$ is an algebra of the form described in Lemma~\ref{extending incomplete radical lemma}~(i) with $t=0$. Thus, this result implies that $$a_{14}=\langle Ae_{i^\prime}^{(3)},e_i^{(1)}\rangle=0\,\,\,\text{for all}\,\,A\in\mc{A}.\smallskip$$ Suppose instead that $i\leq n_{2\Omega}$. We then have that for each $A\in\mc{A}$, $PAP$ can be written as a matrix of the form
$$PAP=\left[\begin{array}{c|cc|c}
a_{11} & a_{11}t_{ii}^{(1)}-t_{ii}^{(1)}a_{22} & -t_{ii}^{(1)}a_{23} & a_{14}\\  \hline
& a_{22} & a_{23} & a_{24}\\
& a_{32} & a_{33} & a_{34}\\ \hline
& & & a_{11}
\end{array}\right]$$ with respect to $\mc{B}$.
It follows that $P\mc{A}P$ is of the form described in Lemma~\ref{extending incomplete radical lemma}~(i) with $t=t_{ii}^{(1)}$, and hence $$a_{14}=\langle Ae_{i^\prime}^{(3)},e_i^{(1)}\rangle=-t_{ii}^{(1)}\langle Ae_{i^\prime}^{(3)},e_i^{(2)}\rangle\,\,\,\text{for all}\,\,A\in\mc{A}.\smallskip$$
Since our choice of indices was arbitrary, these conclusions hold for all $i\in\{1,2,\ldots, \ell_1\}$ and all ${i^\prime}\in\{1,2,\ldots, n_{3\Omega}\}$. Consequently,
$$Q_1^{\prime\prime}AQ_{3\Omega}=-T_1Q_{2\Omega}AQ_{3\Omega}\,\,\,\text{for all}\,\,A\in\mc{A}.\smallskip$$

	We now wish to obtain information on the structure of $Q_{1\Omega}\mc{A}Q_3^{\prime\prime}$. As in the analysis above, it will be convenient to simplify the description of $T_2$ by choosing suitable bases for $\ran(Q_{2\Omega})$ and $\ran(Q_{3\Omega})$. Specifically, Theorem~\ref{svd lem} gives rise to operators $V\in Q_{2\Omega}\mm_nQ_{2\Omega}$, $W\in Q_3^{\prime\prime}\mm_nQ_3^{\prime\prime}$, and a unitary $U\in\mm_n$ such that 
	 $$\begin{array}{l}
	 (Q_{1\Omega}+Q_3^\prime)U(Q_{1\Omega}+Q_3^\prime)=Q_{1\Omega}+Q_3^\prime,\vspace{0.3cm}\\ (Q_{2\Omega}+Q_3^{\prime\prime})U(Q_{2\Omega}+Q_3^{\prime\prime})=V+W,
	 \end{array}\smallskip$$ 
	 and $$\langle U^*T_2Ue_j^{(3)},e_i^{(2)}\rangle=\langle V^*T_2W e_j^{(3)},e_i^{(2)}\rangle=0\,\,\,\text{for all}\,\,i\neq j.\smallskip$$
By considering the algebra $U^*\mc{A}U$ and arguing as above, one may deduce that 
	$$(Q_{1\Omega}AQ_3^{\prime\prime})=(Q_{1\Omega}AQ_{2\Omega})T_2\,\,\,\text{for all}\,\,A\in\mc{A}.\smallskip$$
	
Our findings thus far indicate that with respect to the decomposition $$\cc^n=\ran(Q_1^{\prime\prime})\oplus\ran(Q_1^\prime)\oplus\ran(Q_{2\Omega})\oplus\ran(Q_3^{\prime\prime})\oplus\ran(Q_3^\prime),\smallskip$$ each $A\in\mc{A}$ can be expressed as a matrix of the form $$A=\left[\begin{array}{c|c|c|c|c}
	 a_{11}I & 0 & a_{11}T_1-T_1M & -T_1(MT_2-a_{11}T_2) & -T_1J_2\\ \hline
	 & a_{11} I & J_1 & J_1T_2 & A_{25}\\ \hline
	 & & M & MT_2-a_{11}T_2 & J_2\\ \hline
	 & & & a_{11}I & 0\\ \hline
	 & & & & a_{11}I\end{array}\right]\smallskip$$
	 for some $a_{11}\in\cc$ and operators $M\in Q_{2\Omega}\mm_n Q_{2\Omega}$, $J_1\in Q_1^\prime\rad Q_{2\Omega}$, $J_2\in Q_{2\Omega}\rad Q_3^\prime$, and $A_{25}\in Q_1^{\prime}\mm_n Q_3^\prime$. With this description in hand we are prepared to show that $\mc{A}$ is similar to $\mc{A}_0$, and hence is the unitization of an $\mc{LR}$-algebra. 
	 
	 Consider the operator $S\coloneqq I-T_1-T_2$. This $S$ is invertible with $S^{-1}=I+T_1+T_2+T_1T_2.$ Moreover, for each $A\in\mc{A}$ as above, we have that  
	$$
	S^{-1}AS=\left[\begin{array}{c|c|c|c|c}
	 a_{11}I & 0 & 0 & 0 & 0\\ \hline
	 & a_{11} I & J_1 & 0 & A_{25}\\ \hline
	 & & M & 0 & J_2\\ \hline
	 & & & a_{11}I & 0\\ \hline	 & & & & a_{11}I\end{array}\right].\smallskip$$
From here it is easy to see that $S^{-1}\mc{A}S$ is a type II algebra that has a reduced block upper triangular form with respect to the above decomposition. Moreover, 
	 $$\begin{array}{rl}Q_{1\Omega}Rad(S^{-1}\mc{A}S)Q_{2\Omega}=Q_1^\prime\rad Q_{2\Omega}=Q_1^\prime\mm_nQ_{2\Omega}\phantom{.}&\text{and}\vspace{0.2cm} \\
	 Q_{2\Omega}Rad(S^{-1}\mc{A}S)Q_{3\Omega}=Q_{2\Omega}\rad Q_3^\prime=Q_{2\Omega}\mm_nQ_3^\prime.\end{array}\smallskip$$ 

\noindent Thus, Lemma~\ref{middle block unlinked implies good radical lem} (ii) implies that 
$$S^{-1}\mc{A}S=(Q_1^\prime+Q_{2\Omega})\mm_n(Q_{2\Omega}+Q_3^\prime)+\cc I=\mc{A}_0,\smallskip$$
as claimed.
\end{proof}

\vspace{0.2cm}

\section[6]{Algebras of Type III}\label{case3}
	
	We now begin the final stage of our classification of unital projection compressible subalgebras of $\mm_n$ when $n\geq 4$. The term \textit{type III} will be used to describe a unital subalgebra $\mc{A}$ of $\mm_n$, $n\geq 4$, such that for every orthogonal decomposition $\bigoplus_{i=1}^m\mc{V}_i$ of $\cc^n$ with respect to which $\mc{A}$ is reduced block upper triangular, $\dim\mc{V}_i=1$ for all $i$ (i.e., $m=n$), and there is an integer $k$ as in Corollary~\ref{unique integer k corollary}. It is obvious that such a $k$ must lie strictly between $1$ and $n$. 
	
	As in the preceding sections, it will be important to maintain a record of the integers $k$ and decompositions of $\cc^n$ that satisfy the assumptions of Corollary~\ref{unique integer k corollary} for a given type~III algebra $\mc{A}$.
	
\begin{defn}\label{definition of F3(A)}
If $\mc{A}$ is an algebra of type III, let $\mc{F}_{III}=\mc{F}_{III}\mc{(A)}$ denote the set of pairs $\Omega=(k,\bigoplus_{i=1}^n\mc{V}_i)$ that satisfy the following conditions:

\begin{itemize}

		\item[(i)]$\bigoplus_{i=1}^n\mc{V}_i$ is an orthogonal decomposition of $\cc^n$ with respect to which $\mc{A}$ is reduced block upper triangular;\smallskip
		
		\item[(ii)]$k$ is an integer in $\{2,\ldots, n-1\}$ such that if $Q_{1\Omega}$, $Q_{2\Omega}$, and $Q_{3\Omega}$ denote the orthogonal projections onto $\bigoplus_{i<k}\mc{V}_i$, $\mc{V}_k$, and $\bigoplus_{i>k}\mc{V}_i$, respectively, then for each $i\in\{1,3\}$,
		$$(Q_{i\Omega}+Q_{2\Omega})\mc{A}(Q_{i\Omega}+Q_{2\Omega})\neq\cc(Q_{i\Omega}+Q_{2\Omega}).\smallskip$$

\end{itemize}
		
\end{defn}

\begin{notation}\upshape{
 If $\mc{A}$ is an algebra of type III and $\Omega=(k,\bigoplus_{i=1}^n\mc{V}_i)$ is a pair in $\mc{F}_{III}(\mc{A})$, let $n_{1\Omega}=k-1$, $n_{2\Omega}=1$, and $n_{3\Omega}=n-k$ denote the ranks of $Q_{1\Omega}$, $Q_{2\Omega}$, and $Q_{3\Omega}$, respectively. Note that since $n_{2\Omega}=1$ and $n\geq 4$, we necessarily have $\max\{n_{1\Omega},n_{3\Omega}\}\geq 2$.}

\end{notation}

If $\mc{A}$ is a projection compressible algebra of type III with pair $\Omega\in\mc{F}_{III}(\mc{A})$, then $Q_{i\Omega}\mc{A}Q_{i\Omega}=\cc Q_{i\Omega}$ for each $i\in\{1,2,3\}$. Thus, each corner $Q_{i\Omega}\mc{A}Q_{i\Omega}$ is a diagonal algebra comprised of mutually linked $1\times 1$ blocks. Of course, the blocks in $Q_{i\Omega}\mc{A}Q_{i\Omega}$ may or may not be linked to those in $Q_{j\Omega}\mc{A}Q_{j\Omega}$. If there is linkage between these blocks, we will say that the projections $Q_{i\Omega}$ and $Q_{j\Omega}$ are \textit{linked}; otherwise, we will say that they are \textit{unlinked}. 

Unlike in \S5, it is now entirely possible that $Q_{2\Omega}$ is linked to $Q_{1\Omega}$ or $Q_{3\Omega}$. As the following result demonstrates, however, there do not exist projection compressible algebras of type III for which all projections $Q_{i\Omega}$ are mutually linked.

\begin{prop}\label{case III - Q2 linked prop}

	Let $\mc{A}$ be a projection compressible algebra of type III, and let $\Omega$ be a pair in $\mc{F}_{III}(\mc{A})$.
	\begin{itemize}
		\item[(i)] If $Q_{2\Omega}$ is linked to $Q_{1\Omega}$, then $n_{1\Omega}=1$ and $Q_{1\Omega}\rad Q_{2\Omega}=Q_{1\Omega}\mm_n Q_{2\Omega}$
		
		\item[(ii)] If $Q_{2\Omega}$ is linked to $Q_{3\Omega}$, then $n_{3\Omega}=1$ and $Q_{2\Omega}\rad Q_{3\Omega}=Q_{2\Omega}\mm_nQ_{3\Omega}$.
		
	\end{itemize}
	
	\noindent Consequently, $Q_{2\Omega}$ cannot be linked to both $Q_{1\Omega}$ and $Q_{3\Omega}$.
\end{prop}

\begin{proof}
Clearly (ii) follows from (i) by replacing $\mc{A}$ with $\mc{A}^{aT}$. Thus, it suffices to prove (i).

Suppose to the contrary that $n_{1\Omega}\geq 2$. For each $i\in\{1,2,3\}$, let $\left\{e_1^{(i)},e_2^{(i)},\ldots, e_{n_{i\Omega}}^{(i)}\right\}$ be an orthonormal basis for $\ran(Q_{i\Omega})$. For each index $j$ in $\{1,2,\ldots, n_{3\Omega}\}$, let $P_j$ denote the orthogonal projection onto the span of $\mc{B}_j\coloneqq\left\{e_1^{(1)},e_2^{(1)},e_1^{(2)},e_j^{(3)}\right\}$. Furthermore, define $P_{j}^\prime$ to be the operator
$$P_j^\prime=\begin{bmatrix}
1 & 0 & 0 & 1\\
0 & 2 & 0 & 0\\
0 & 0 & 2 & 0\\
1 & 0 & 0 & 1
\end{bmatrix},\smallskip$$ acting on $\ran(P_j)$ and written with respect to the basis $\mc{B}_j$. It is clear that $\frac{1}{2}P_j^\prime$ is a subprojection of $P_j$. 

One may verify that every $B\in P_j^\prime\mc{A}P_j^\prime$ satisfies the equation $\langle Be_1^{(2)},e_1^{(2)}\rangle=
\langle Be_{2}^{(1)},e_{2}^{(1)}\rangle.$ 
But if $A$ belongs to $\mc{A}$ and $C\coloneqq (P_j^\prime AP_j^\prime)^2$, then  
$$\langle Ce_1^{(2)},e_1^{(2)}\rangle-\langle Ce_{2}^{(1)},e_{2}^{(1)}\rangle=8\langle Ae_1^{(2)},e_1^{(1)}\rangle\langle Ae_j^{(3)},e_1^{(2)}\rangle.\smallskip$$
Since $C$ is an element of $P_j^\prime\mc{A}P_j^\prime$, the right-hand side of this equation must be zero. To obtain a contradiction, it therefore suffices to exhibit an element $A$ in $\mc{A}$ such that for some $j\in\{1,2,\ldots, n_{3\Omega}\}$, both $\langle Ae_1^{(2)},e_1^{(1)}\rangle$ and $\langle Ae_j^{(3)},e_1^{(2)}\rangle$ are non-zero.

First suppose that the projections $Q_{1\Omega}$, $Q_{2\Omega}$, and $Q_{3\Omega}$ are mutually linked. By definition of $\Omega$ as a pair in $\mc{F}_{III}(\mc{A})$, there exist $i\in\{1,2,\ldots, n_{1\Omega}\}$ and $j\in\{1,2,\ldots, n_{3\Omega}\}$, as well as $A_1,A_2\in\mc{A}$, such that $\langle A_1e_1^{(2)},e_i^{(1)}\rangle\neq 0$ and $\langle A_2e_j^{(3)},e_1^{(2)}\rangle\neq 0$. By reordering the basis for $\ran(Q_{1\Omega})$ if necessary, we may assume that $i=1$. If $\langle A_2e_1^{(2)},e_1^{(1)}\rangle\neq 0$ or $\langle A_1e_j^{(3)},e_1^{(2)}\rangle\neq 0$, then we obtain the required contradiction. Otherwise, $A\coloneqq A_1+A_2$ is such that $\langle Ae_1^{(2)},e_1^{(1)}\rangle\neq 0$ and $\langle Ae_j^{(3)},e_1^{(2)}\rangle\neq 0,$ as desired.

Now suppose that $Q_{3\Omega}$ is unlinked from $Q_{1\Omega}$ and $Q_{2\Omega}$. By reordering the basis for $\ran(Q_{1\Omega})$ if necessary, we may obtain an element $A_1\in\mc{A}$ such that $\langle A_1e_1^{(2)},e_1^{(1)}\rangle\neq 0$. If there is an element $A_2\in\mc{A}$ such that $\langle A_2e_j^{(3)},e_1^{(2)}\rangle\neq 0$ for some $j\in\{1,2,\ldots, n_{3\Omega}\}$, then arguments similar to those in the linked case above provide the required contradiction. Of course, it is now entirely possible that no such $A_2$ exists, as $Q_{2\Omega}$ and $Q_{3\Omega}$ are unlinked. That is, it may be that $Q_{2\Omega}\mc{A}Q_{3\Omega}=\{0\}$. Assume that this is the case.

Let $\mc{B}=\left\{e_1^{(1)},e_2^{(1)},e_1^{(3)},e_1^{(2)}\right\}$, and define $P$ to be the orthogonal projection onto the span of $\mc{B}$. Note that with respect to the basis $\mc{B}$ for $\ran(P)$, each $A\in P\mc{A}P$ may be written as 
$$A=\left[\begin{array}{cc|c|c}
\alpha & 0 & a_{13} & a_{14}\\
 & \alpha & a_{23} & a_{24}\\ \hline
 & & \beta & 0\\ \hline
 & & & \alpha
\end{array}\right]$$ for some $\alpha$, $\beta$, and $a_{ij}\in\cc$. Consider the operator
$$P^\prime=\begin{bmatrix}
\phantom{-}2 & 0 & -1 & -1\\
\phantom{-}0 & 3 & \phantom{-}0 & \phantom{-}0\\
-1 & 0 & \phantom{-}2 & -1\\
-1 & 0 & -1 & \phantom{-}2
\end{bmatrix},\smallskip$$ acting on $\ran(P)$ and written with respect to $\mc{B}$. It is easy to see that $\frac{1}{3}P^\prime$ is a subprojection of $P$. Moreover, one may verify that every element $B=(b_{ij})$ in $P^\prime\mc{A}P^\prime$ satisfies the equation 
$b_{33}+2b_{31}-b_{43}-2b_{41}-b_{22}=0.$
But if $A$ is as above and we define $(P^\prime AP^\prime)^2=(c_{ij})$, then 
$$c_{33}+2c_{31}-c_{43}-2c_{41}-c_{22}=27a_{14}(\beta-\alpha).\smallskip$$
Since $\alpha$ and $\beta$ may be chosen arbitrarily, it must be that $a_{14}=\langle Ae_1^{(2)},e_1^{(1)}\rangle=0$ for all $A$. This is a contradiction, as $\langle A_1e_1^{(2)},e_1^{(1)}\rangle\neq 0$. We therefore conclude that $n_{1\Omega}=1$.

Since $Q_{1\Omega}$ and $Q_{2\Omega}$ are linked, yet $(Q_{1\Omega}+Q_{2\Omega})\mc{A}(Q_{1\Omega}+Q_{2\Omega})\neq\cc(Q_{1\Omega}+Q_{2\Omega})$ by definition of $\Omega$ as a pair in $\mc{F}_{III}(\mc{A})$, it follows that $Q_{1\Omega}\rad Q_{2\Omega}\neq \{0\}$. Consequently, $Q_{1\Omega}\rad Q_{2\Omega}=Q_{1\Omega}\mm_nQ_{2\Omega}$ as $n_{1\Omega}=n_{2\Omega}=1$.

The final claim now follows from the fact that $\max\left\{n_{1\Omega},n_{3\Omega}\right\}\geq 2$.
\end{proof}
\smallskip

The above result indicates that if $\mc{A}$ is a projection compressible algebra of type III and $\Omega$ is a pair in $\mc{F}_{III}(\mc{A})$, then there is a projection $Q_{i\Omega}$ that is unlinked from $Q_{2\Omega}$. In the case that this $Q_{i\Omega}$ is also unlinked from the remaining projection $Q_{j\Omega}$, one can say more about the structure of $\mc{A}$.

	\begin{prop}\label{case III - all unlinked, n3=1 prop}
		
	Let $\mc{A}$ be a projection compressible type III  subalgebra of $\mm_n$, and let $\Omega$ be a pair in $\mc{F}_{III}(\mc{A})$. 	
	\begin{itemize}	
	\item[(i)] If $Q_{3\Omega}$ is unlinked from $Q_{1\Omega}$ and $Q_{2\Omega}$, then either  $Q_{2\Omega}\rad Q_{3\Omega}=Q_{2\Omega}\mm_nQ_{3\Omega}$; or $n_{3\Omega}=1$ and $Q_{2\Omega}\rad Q_{3\Omega}=\{0\}$.	\smallskip
	
	\item[(ii)] If $Q_{1\Omega}$ is unlinked from $Q_{2\Omega}$ and $Q_{3\Omega}$, then either $Q_{1\Omega}\rad Q_{2\Omega}=Q_{1\Omega}\mm_nQ_{2\Omega}$; or $n_{1\Omega}=1$ and $Q_{1\Omega}\rad Q_{2\Omega}=\{0\}$.

	\end{itemize}
	
	\end{prop}
	
	\begin{proof}
	As in the previous proof it is easy that (ii) follows from (i) by replacing $\mc{A}$ with $\mc{A}^{aT}$. Thus, it suffices to prove (i). 
	
	Assume that $Q_{3\Omega}$ is unlinked from both $Q_{1\Omega}$ and $Q_{2\Omega}$. Suppose for the sake of contradiction that $n_{3\Omega}\geq 2$ and $Q_{2\Omega}\rad Q_{3\Omega}\neq Q_{2\Omega}\mm_nQ_{3\Omega}$. For each $i\in\{1,2,3\}$, let $\left\{e_1^{(i)},e_2^{(i)},\ldots, e_{n_{i\Omega}}^{(i)}\right\}$ be an orthonormal basis for $\ran(Q_{i\Omega})$, and assume that the basis for $\ran(Q_{3\Omega})$ is chosen so that $\langle Re_1^{(3)},e_1^{(2)}\rangle=0$ for all $R\in\rad$.
	
	Define $\mc{B}=\left\{e_1^{(1)},e_1^{(2)},e_1^{(3)},e_2^{(3)}\right\}$, let $P$ denote the orthogonal projection onto the span of $\mc{B}$, and consider the compression $\mc{A}_0\coloneqq P\mc{A}P$. As a consequence of Theorem~\ref{every algebra is similar to an unhinged algebra}, there is a constant $t\in\cc$ such that with respect to the basis $\mc{B}$ for $\ran(P)$, each $A$ in $\mc{A}_0$ admits a matrix of the form 
	$$A=\left[\begin{array}{c|c|cc}
	\alpha & a_{12} &  a_{13} & a_{14}\\ \hline
	& \beta & t(\beta-\gamma) & a_{24}\\ \hline
	& & \gamma & 0\\
	& & & \gamma
	\end{array}\right]$$
	for some $\alpha$, $\beta$, $\gamma$, and $a_{ij}$ in $\cc.$ Note that in the case that $Q_{1\Omega}$ and $Q_{2\Omega}$ are linked, $\alpha$ and $\beta$ must coincide for each $A\in\mc{A}_0$. In the case that they are unlinked, these values may be chosen independently. With this in mind, the following arguments are applicable to either setting.
	
	Consider the matrices  
	$$\begin{array}{ccc}P_1\coloneqq \begin{bmatrix}
	1 & 0 & 0 & 1\\
	0 & 2 & 0 & 0\\
	0 & 0 & 2 & 0\\
	1 & 0 & 0 & 1
	\end{bmatrix} & \text{and} & P_2\coloneqq \begin{bmatrix}
	1 & 0 & 1 & 0\\
	0 & 2 & 0 & 0\\
	1 & 0 & 1 & 0\\
	0 & 0 & 0 & 2
	\end{bmatrix}\end{array},\smallskip$$
	acting on $\ran(P)$ and written with respect to the basis $\mc{B}$. It is easy to see that $\frac{1}{2}P_1$ and $\frac{1}{2}P_2$ are subprojections of $P$. In addition, one may verify that every $B\in P_1\mc{A}_0 P_1$ satisfies the equation $$\langle B e_1^{(3)},e_1^{(2)}\rangle-t\langle Be_1^{(2)},e_1^{(2)}\rangle+t\langle Be_1^{(3)},e_1^{(3)}\rangle =0.\smallskip$$ Thus, if $A$ belongs to $\mc{A}_0$ and $C\coloneqq (P_1AP_1)^2$, then 
	$$\langle C e_1^{(3)},e_1^{(2)}\rangle-t\langle Ce_1^{(2)},e_1^{(2)}\rangle+t\langle Ce_1^{(3)},e_1^{(3)}\rangle=8\langle Ae_2^{(3)},e_1^{(2)}\rangle\left(\langle Ae_1^{(3)},e_1^{(1)}\rangle-t\langle Ae_1^{(2)},e_1^{(1)}\rangle\right)\smallskip$$ must be zero. It follows that $\langle Ae_2^{(3)},e_1^{(2)}\rangle=0$ for all $A\in\mc{A}_0$, or $\langle Ae_1^{(3)},e_1^{(1)}\rangle=t\langle Ae_1^{(2)},e_1^{(1)}\rangle$ for all $A\in\mc{A}_0.$ Indeed, it is clear that every member of $\mc{A}_0$ must satisfy at least one of these equations. If, however, there were elements $A_1$ and $A_2$ in $\mc{A}_0$ such that $\langle A_1e_2^{(3)},e_1^{(2)}\rangle\neq 0$ and $\langle A_2e_1^{(3)},e_1^{(1)}\rangle\neq t\langle A_2e_1^{(2)},e_1^{(1)}\rangle$, then neither equation would be satisfied by their sum.
	
	If it were the case that $\langle Ae_2^{(3)},e_1^{(2)}\rangle=0$ for every $A\in\mc{A}_0$, then by viewing $\mc{A}_0$ as an algebra of matrices with respect to the reordered basis $\left\{e_1^{(1)},e_2^{(3)}, e_1^{(2)},e_1^{(3)}\right\}$ for $\ran(P)$, $\mc{A}_0$ would be seen to lack the projection compression property by Theorem~\ref{At most one non-scalar corner theorem}. This is clearly a contradiction, so it must be that $$\langle Ae_1^{(3)},e_1^{(1)}\rangle=t\langle Ae_1^{(2)},e_1^{(1)}\rangle\,\,\text{for all}\,\,A.\smallskip$$
From here one may verify that every $B\in P_2\mc{A}_0P_2$ satisfies the equation $$2\langle Be_1^{(3)},e_1^{(2)}\rangle-t\langle Be_1^{(2)},e_1^{(2)}\rangle+t\langle Be_2^{(3)},e_2^{(3)}\rangle=0.\smallskip$$ In particular, if $A\in\mc{A}_0$ is as above, then this equation must also hold for $D\coloneqq (P_2AP_2)^2$. Since
	$$2\langle De_1^{(3)},e_1^{(2)}\rangle-t\langle De_1^{(2)},e_1^{(2)}\rangle+t\langle De_2^{(3)},e_2^{(3)}\rangle=8t(\beta-\gamma)(\alpha-\gamma)\smallskip$$ and $\gamma$ may be selected independently from $\alpha$ and $\beta$, we deduce that $t=0$. It is now evident that every $A\in\mc{A}_0$ can be expressed as a matrix of the form 
	$$A=\left[\begin{array}{cc|cc}
	\alpha & 0 & a_{12} & a_{14}\\
	& \gamma & 0 & 0\\ \hline
	& & \beta & a_{24}\\
	& & & \gamma
	\end{array}\right]$$
	with respect to the basis $\left\{e_1^{(1)},e_1^{(3)},e_1^{(2)},e_2^{(3)}\right\}$ for $\ran(P)$. Thus, Theorem~\ref{At most one non-scalar corner theorem} provides the required contradiction. 
	
	It must therefore be the case that $Q_{2\Omega}\rad Q_{3\Omega}=Q_{2\Omega}\mm_nQ_{3\Omega}$ or $n_{3\Omega}=1$. Of course, in the event that $Q_{2\Omega}\rad Q_{3\Omega}\neq Q_{2\Omega}\mm_n Q_{3\Omega}$ and hence $n_{3\Omega}=1$, it follows immediately that $Q_{2\Omega}\rad Q_{3\Omega}=\{0\}$.
	\end{proof}	
\smallskip

The preceding propositions will be key ingredients in our treatment of projection compressible algebras of type III. Our analysis will proceed in the same spirit as those for algebras of types I or II. We will begin in \S\ref{Subsection: Type III algebras unlinked} by classifying the projection compressible type III algebras for which the projections $Q_{i\Omega}$ are mutually unlinked. In $\S\ref{Subsection: Type III algebras linked}$, we will classify the projection compressible type III algebras for which exactly two distinct projections $Q_{i\Omega}$ and $Q_{j\Omega}$ are linked.

\subsection{Type III Algebras with  Unlinked Projections}\label{Subsection: Type III algebras unlinked}

	In this section we present a classification of the projection compressible type III algebras for which the pairs $\Omega$ in $\mc{F}_{III}$ are such that no two distinct projections $Q_{i\Omega}$ and $Q_{j\Omega}$ are linked. Such algebras include the algebra from Example~\ref{exmp:families of compressible algebras:1} when $Q_1\neq 0, Q_3\neq 0$ and $\dim Q_2=1$; and the algebra from Example~\ref{exmp:families of compressible algebras:2}. As the following theorem demonstrates, every projection compressible type III algebra with mutually unlinked projections is either transpose equivalent to the former, or transpose similar to the latter.

	\begin{thm}\label{big theorem for type III mutually unlinked}
	
	Let $\mc{A}$ be a projection compressible type III subalgebra of $\mm_n$. If there is a pair $\Omega$ in $\mc{F}_{III}(\mc{A})$ such that no two distinct projections $Q_{i\Omega}$ and $Q_{j\Omega}$ are linked, then $\mc{A}$ is transpose equivalent to the type III algebra from Example~\ref{exmp:families of compressible algebras:1}, or transpose similar to the algebra from Example~\ref{exmp:families of compressible algebras:2}. Consequently, $\mc{A}$ is idempotent compressible.

	\end{thm}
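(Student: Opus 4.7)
The plan is to partition the analysis by whether both $Q_{1\Omega}\rad Q_{2\Omega} = Q_{1\Omega}\mm_n Q_{2\Omega}$ and $Q_{2\Omega}\rad Q_{3\Omega} = Q_{2\Omega}\mm_n Q_{3\Omega}$ hold. If so, Lemma~\ref{middle block unlinked implies good radical lemma}~(ii) applied with $Q_1' = Q_{1\Omega}$ and $Q_3' = Q_{3\Omega}$ (the hypothesis $Q_{1\Omega}\rad Q_{3\Omega} = Q_1'\rad Q_3'$ being automatic) gives $\rad = Q_{1\Omega}\mm_n Q_{2\Omega}\dotplus Q_{1\Omega}\mm_n Q_{3\Omega}\dotplus Q_{2\Omega}\mm_n Q_{3\Omega}$. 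The mutually unlinked hypothesis makes $BD(\mc{A}) = \cc Q_{1\Omega} + \cc Q_{2\Omega} + \cc Q_{3\Omega}$ three dimensional, and for each $i$ any $A\in\mc{A}$ lifting $Q_{i\Omega}$ differs from $Q_{i\Omega}$ by an element of $\rad\subseteq\mc{A}$, whence $Q_{i\Omega}\in\mc{A}$. Combined with $Q_{2\Omega}\mm_n Q_{2\Omega} = \cc Q_{2\Omega}$ (since $Q_{2\Omega}$ has rank one), this yields
$$\mc{A} = \cc Q_{1\Omega} + \cc Q_{3\Omega} + (Q_{1\Omega}+Q_{2\Omega})\mm_n(Q_{2\Omega}+Q_{3\Omega}),$$
the unital algebra of Example~\ref{exmp:families of compressible algebras:1}, so $\mc{A}$ is transpose equivalent to Example~\ref{exmp:families of compressible algebras:1}.

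Otherwise, by replacing $\mc{A}$ with $\mc{A}^{aT}$ if necessary, I may assume $Q_{2\Omega}\rad Q_{3\Omega}\neq Q_{2\Omega}\mm_n Q_{3\Omega}$. Lemma~\ref{case III - all unlinked, n3=1 lemma} then forces $n_{3\Omega} = 1$, so $Q_{2\Omega}\mm_n Q_{3\Omega}$ is one dimensional and $Q_{2\Omega}\rad Q_{3\Omega} = \{0\}$. Since $\max(n_{1\Omega}, n_{3\Omega})\geq 2$ in the type~III setting, $n_{1\Omega}\geq 2$, and then the contrapositive of Lemma~\ref{case III - all unlinked, n3=1 lemma} gives $Q_{1\Omega}\rad Q_{2\Omega} = Q_{1\Omega}\mm_n Q_{2\Omega}$.

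The main obstacle is promoting these conclusions to $Q_{1\Omega}\rad Q_{3\Omega} = Q_{1\Omega}\mm_n Q_{3\Omega}$. The key idea is to reorder the reduced decomposition so that the one-dimensional blocks of $Q_{2\Omega}$ and $Q_{3\Omega}$ are interchanged; because $Q_{2\Omega}\rad Q_{3\Omega} = \{0\}$, this interchange keeps $\mc{A}$ reduced block upper triangular and produces a pair $\Omega'\in\mc{F}_{III}(\mc{A})$ with $Q_{1\Omega'} = Q_{1\Omega}$, $Q_{2\Omega'} = Q_{3\Omega}$, and $Q_{3\Omega'} = Q_{2\Omega}$, inheriting the mutual unlinkedness. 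Both corner conditions for $\Omega'$ are satisfied because $\cc Q_{1\Omega} + \cc Q_{3\Omega}$ and $\cc Q_{3\Omega} + \cc Q_{2\Omega}$ are each two dimensional. Applying Lemma~\ref{case III - all unlinked, n3=1 lemma} to $\Omega'$: if $Q_{1\Omega'}\rad Q_{2\Omega'} = Q_{1\Omega}\rad Q_{3\Omega}$ were not full, then $n_{1\Omega'} = n_{1\Omega}$ would equal $1$, contradicting $n_{1\Omega}\geq 2$. Therefore $\rad = Q_{1\Omega}\mm_n(Q_{2\Omega}+Q_{3\Omega})$ and $\mc{A} = \cc Q_{1\Omega} + \cc Q_{2\Omega} + \cc Q_{3\Omega} + Q_{1\Omega}\mm_n(Q_{2\Omega}+Q_{3\Omega})$. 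Identifying the two rank-one projections $Q_{2\Omega}$ and $Q_{3\Omega}$ with the pair $Q_1, Q_2$ from Example~\ref{exmp:families of compressible algebras:2} and $Q_{1\Omega}$ with $Q_3$, a permutation unitary exhibits $\mc{A}$ as unitarily equivalent to the transpose of that example's unital algebra, so $\mc{A}$ is transpose similar to Example~\ref{exmp:families of compressible algebras:2}.
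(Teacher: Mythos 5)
Your first case (both radical blocks full) is correct and matches the paper. The gap is in the second case, at the step where you interchange the one-dimensional blocks for $Q_{2\Omega}$ and $Q_{3\Omega}$ and claim the result is still a reduced block upper triangular form. The hypothesis $Q_{2\Omega}\rad Q_{3\Omega}=\{0\}$ controls only the radical, not the whole algebra: writing $\mc{A}=\mc{S}\dotplus\rad$ with $\mc{S}$ similar to $BD(\mc{A})$ via a block upper triangular similarity, the semi-simple part can contribute a nonzero ``hinge'' in the $(2,3)$ block of the form $t(\beta-\gamma)$, where $\beta$ and $\gamma$ are the (independent, since $Q_{2\Omega}$ and $Q_{3\Omega}$ are unlinked) scalars on those blocks. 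So $Q_{2\Omega}\mc{A}Q_{3\Omega}$ need not vanish, the swapped decomposition need not make $\mc{A}$ block upper triangular, your pair $\Omega'$ need not lie in $\mc{F}_{III}(\mc{A})$, and Lemma~\ref{case III - all unlinked, n3=1 lemma} cannot be applied to it. This is not a removable technicality: the possibility $t\neq 0$ is exactly why the theorem asserts only transpose \emph{similarity} to Example~\ref{exmp:families of compressible algebras:2} in this case (compare Lemma~\ref{similar to second example implies unitarily equivalent to At} and the algebras $\mc{A}_t$), and your final displayed form of $\mc{A}$, which omits the hinge, would wrongly upgrade the conclusion to unitary equivalence. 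Note also that you cannot first conjugate by $S=I-te_1^{(2)}\otimes e_1^{(3)*}$ to kill the hinge and then swap, because projection compressibility is preserved by unitary equivalence but not by similarity, so the lemma would no longer be available for $S^{-1}\mc{A}S$.

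What your shortcut skips is precisely the hard part of the paper's argument: after establishing $n_{3\Omega}=1$, $Q_{2\Omega}\rad Q_{3\Omega}=\{0\}$ and $Q_{1\Omega}\rad Q_{2\Omega}=Q_{1\Omega}\mm_nQ_{2\Omega}$ exactly as you do, the paper decomposes $\rad=\mc{R}^{(1)}\dotplus\mc{R}^{(2)}$ with $\mc{R}^{(1)}$ consisting of elements whose $(1,2)$ and $(1,3)$ blocks are $X$ and $tX$, and $\mc{R}^{(2)}=\rad\cap Q_{1\Omega}\mm_nQ_{3\Omega}$, and then proves $\mc{R}^{(2)}=Q_{1\Omega}\mm_nQ_{3\Omega}$ by a direct computation with several explicit rank-three projections (showing along the way that certain hinge parameters vanish) before conjugating by $I-te_1^{(2)}\otimes e_1^{(3)*}$ to reach the anti-transpose of Example~\ref{exmp:families of compressible algebras:2}. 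You would need to supply an argument of that kind to close the gap.
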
	
	
	\begin{proof}
	
		Let $\Omega=(k,\bigoplus_{i=1}^n\mc{V}_i)$ be a pair in $\mc{F}_{III}(\mc{A})$ as in the statement of the theorem. For each $i$ in $\{1,2,3\}$, fix an orthonormal basis $\left\{e_1^{(i)},e_2^{(i)},\ldots, e_{n_{i\Omega}}^{(i)}\right\}$ for $\ran(Q_{i\Omega})$. 
		
		Note that if $Q_{1\Omega}\rad Q_{2\Omega}=Q_{1\Omega}\mm_n Q_{2\Omega}$ and $Q_{2\Omega}\rad Q_{3\Omega}=Q_{2\Omega}\mm_n Q_{3\Omega}$, then by Lemma~\ref{middle block unlinked implies good radical lem}~(ii), 
	$$\rad=Q_{1\Omega}\mm_n Q_{2\Omega}\dotplus Q_{1\Omega}\mm_n Q_{3\Omega}\dotplus Q_{2\Omega}\mm_n Q_{3\Omega}.\smallskip$$
	In this case, $\mc{A}$ is the type III algebra from Example~\ref{exmp:families of compressible algebras:1}, so $\mc{A}$ is idempotent compressible. It therefore suffices to consider the case in which $Q_{1\Omega}\rad Q_{2\Omega}\neq Q_{1\Omega}\mm_n Q_{2\Omega}$ or $Q_{2\Omega}\rad Q_{3\Omega}\neq  Q_{2\Omega}\mm_n Q_{3\Omega}.$

By replacing $\mc{A}$ with $\mc{A}^{aT}$ if necessary, we may assume without loss of generality that $$Q_{2\Omega}\rad Q_{3\Omega}\neq Q_{2\Omega}\mm_n Q_{3\Omega}.$$ It then follows from Proposition~\ref{case III - all unlinked, n3=1 prop}~(i) that $n_{3\Omega}=1$ and $Q_{2\Omega}\rad Q_{3\Omega}=\{0\}$. Consequently, $n_{1\Omega}\geq 2$ and hence $Q_{1\Omega}\rad Q_{2\Omega}=Q_{1\Omega}\mm_nQ_{2\Omega}$ by Proposition~\ref{case III - all unlinked, n3=1 prop}~(ii). 

The above observations imply that for every $X\in Q_{1\Omega}\mm_n Q_{2\Omega}$, there exists an element $Y_X\in Q_{1\Omega}\mm_n Q_{3\Omega}$ such that $X+Y_X\in\rad$. Additionally, as a consequence of Theorem~\ref{every algebra is similar to an unhinged algebra}, there is a constant $t\in\cc$ such that 
		$$\langle Ae_1^{(3)},e_1^{(2)}\rangle =t\left(\langle Ae_1^{(2)},e_1^{(2)}\rangle-\langle Ae_1^{(3)},e_1^{(3)}\rangle\right)\,\,\text{for all}\,\,A\in\mc{A}.\smallskip$$		
		It therefore suffices to prove that $\rad=Q_{1\Omega}\mm_n(Q_{2\Omega}+Q_{3\Omega})$. Indeed, when this is the case, consider the operator $S\coloneqq I-te_1^{(2)}\otimes {e_1^{(3)*}}\in\mm_n$. One may verify that $S$ is invertible with $S^{-1}=I+te_1^{(2)}\otimes e_1^{(3)*}$, and $S^{-1}\mc{A}S$ is the anti-transpose of the type~III algebra from Example~\ref{exmp:families of compressible algebras:2}.
		
		To this end, note that since $Q_{1\Omega},Q_{2\Omega},$ and $Q_{3\Omega}$ are mutually unlinked, there is an element $A_1\in\mc{A}$ such that $Q_{2\Omega}A_1Q_{2\Omega}=Q_{2\Omega}$ and  $Q_{1\Omega}A_1Q_{1\Omega}=Q_{3\Omega}A_1Q_{3\Omega}=0.$ With respect to the direct sum decomposition $\cc^n=\ran(Q_{1\Omega})\oplus\ran(Q_{2\Omega})\oplus\ran(Q_{3\Omega})$, we may write 
		$$A_1=\left[\begin{array}{c|c|c}
		0 & A_{12} & A_{13}\\ \hline
		& 1 & t\\ \hline
		& & 0
		\end{array}\right]\smallskip$$
		for some $A_{12}\in Q_{1\Omega}\mm_n Q_{2\Omega}$ and $A_{13}\in Q_{1\Omega}\mm_n Q_{3\Omega}$. Thus, for any $X\in Q_{1\Omega}\mc{A}Q_{2\Omega}$, there exists $Y_X\in Q_{1\Omega}\mc{A}Q_{3\Omega}$ such that $\rad$ contains 
		$$(X+Y_X)A_1=\left[\begin{array}{c|c|c}
		0 & X & Y_X\\ \hline
		& 0 & 0\\ \hline
		& & 0
		\end{array}\right]\left[\begin{array}{c|c|c}
		0 & A_{12} & A_{13}\\ \hline
		& 1 & t\\ \hline
		& & 0
		\end{array}\right]=\left[\begin{array}{c|c|c}
		0 & X & tX\\ \hline
		& 0 & 0\\ \hline
		& & 0
		\end{array}\right].\smallskip$$
		We conclude that $\rad=\mc{R}^{(1)}\dotplus\mc{R}^{(2)}$ where 
		$$\mc{R}^{(1)}\coloneqq \left\{\left[\begin{array}{c|c|c}
		0 & X & tX\\ \hline
		& 0 & 0\\ \hline
		& & 0
		\end{array}\right]:X\in \mm_{(k-1)\times 1}\right\}.\smallskip$$
		and $\mc{R}^{(2)}\coloneqq \rad\cap Q_{1\Omega}\mm_n Q_{3\Omega}$.
		
		We claim that $\mc{R}^{(2)}$ must be equal to $Q_{1\Omega}\mm_n Q_{3\Omega}$. Suppose to the contrary that this is not the case. By changing the orthonormal basis for $\ran(Q_{1\Omega})$ if necessary, we may assume that $$\langle Ye_1^{(3)},e_1^{(1)}\rangle=0\,\,\,\text{for all}\,\,Y\in\mc{R}^{(2)}.\smallskip$$ 
		Consider the set $\mc{B}=\left\{e_1^{(1)},e_2^{(1)},e_1^{(2)},e_1^{(3)}\right\}$ and let $P$ denote the orthogonal projection onto the span of $\mc{B}$. Define $\mc{A}_0$ to be the compression $P\mc{A}P$, and accordingly, define $$\begin{array}{ccc}\mc{R}_0^{(1)}\coloneqq P\mc{R}^{(1)}P & \text{and} & \mc{R}_0^{(2)}\coloneqq P\mc{R}^{(2)}P.\end{array}\smallskip$$ Since $\mc{A}_0=\mc{S}\dotplus Rad(\mc{A}_0)$ where $\mc{S}$ is similar to $BD(\mc{A}_0)$ via a block upper triangular similarity, there are constants $u_1,u_2,v_1,v_2\in\cc$ such that each $A\in\mc{A}_0$ can be written as \vspace{0.1cm}
		\begin{equation*}
\resizebox{\textwidth}{!} 
{
$A=\left[\begin{array}{cc|c|c}
		\alpha & 0 & v_1(\alpha-\beta) & u_1(\alpha-\gamma)-tv_1(\beta-\gamma)\\
		& \alpha & v_2(\alpha-\beta) & u_2(\alpha-\gamma)-tv_2(\beta-\gamma)\\ \hline
		& & \beta & t(\beta-\gamma)\\ \hline
		& & & \gamma		 
		\end{array}\right]+\left[\begin{array}{cc|c|c}
		0 & 0 & x_1 & tx_1\\
		 & 0 & x_2 & tx_2\\ \hline
		 & &  0 & 0\\ \hline
		 & & & 0
		\end{array}\right]+\left[\begin{array}{cc|c|c}
		0 & 0 & 0 & 0\\
		& 0 & 0 & y\\ \hline
		& & 0 & 0\\ \hline
		& & & 0
		\end{array}\right].\vspace{0.3cm}$
		}
		\end{equation*}
		where the above summands are expressed with respect to the basis $\mc{B}$ for $\ran(P)$, and belong to $\mc{S}$, $\mc{R}_0^{(1)}$, and $\mc{R}_0^{(2)}$, respectively. We will obtain a contradiction by showing that a certain compression of $\mc{A}_0$ violates Theorem~\ref{At most one non-scalar corner theorem}. To accomplish this goal, it will first be necessary to prove that $t=u_1=0$.
		 
		  With this in mind, consider the matrices 
		$$\begin{array}{cccc}
		P_1\coloneqq \begin{bmatrix}
		1 & 0 & 0 & 1\\
		0 & 2 & 0 & 0\\
		0 & 0 & 2 & 0\\
		1 & 0 & 0 & 1
		\end{bmatrix}, & P_2\coloneqq \begin{bmatrix}
		\phantom{-}1 & 0 & 0 & -1\\
		\phantom{-}0 & 2 & 0 & \phantom{-}0\\
		\phantom{-}0 & 0 & 2 & \phantom{-}0\\
		-1 & 0 & 0 & \phantom{-}1
		\end{bmatrix}, & \text{and} &
		P_3\coloneqq \begin{bmatrix}
		1 & 0 & 1 & 0\\
		0 & 2 & 0 & 0\\
		1 & 0 & 1 & 0\\
		0 & 0 & 0 & 2
		\end{bmatrix},
		\end{array}\smallskip$$
		acting on $\ran(P)$ and written with respect to the basis $\mc{B}$. It is clear that for each $i$, $\frac{1}{2}P_i$ is a subprojection of $P$. One may verify that if $B_1=(b_{ij}^{(1)})$ and $B_2=(b_{ij}^{(2)})$ belong to $P_1\mc{A}_0 P_1$ and $P_2\mc{A}_0P_2$, respectively, then their entries satisfy the equations 
		$$\begin{array}{rl}
		4tb_{14}^{(1)}+2(tv_1-u_1+1)b_{34}^{(1)}-2t^2b_{13}^{(1)}+t(tv_1-u_1-1)b_{22}^{(1)}-t(tv_1-u_1+1)b_{33}^{(1)}=0, & \text{and}\vspace{0.5cm}\\
		4tb_{14}^{(2)}+2(tv_1-u_1-1)b_{34}^{(2)}-2t^2b_{13}^{(2)}+t(tv_1-u_1+1)b_{22}^{(2)}-t(tv_1-u_1-1)b_{33}^{(2)}=0.
		\end{array}\smallskip$$
Let $A_0$ denote the element of $\mc{A}_0$ obtained by setting $\alpha=\beta=x_2=y=0$ and $\gamma=x_1=1$. That is, 
		$$A_0=\left[\begin{array}{cc|c|c}
		0 & 0 & 1 & tv_1-u_1+t\\
		& 0 & 0 & tv_2-u_2\\ \hline
		& & 0 & -t\\ \hline
		& & & 1
		\end{array}\right].\smallskip$$
		Since $\mc{A}$ is projection compressible, $C_1\coloneqq (P_1A_0P_1)^2$ must satisfy the first equation above, while $C_2\coloneqq (P_2A_0P_2)^2$ must satisfy the second. But with ${C_1=(c_{ij}^{(1)})}$ and $C_2=(c_{ij}^{(2)})$, we have
$$\begin{array}{lcl}
4tc_{14}^{(1)}+2(tv_1-u_1+1)c_{34}^{(1)}-2t^2c_{13}^{(1)}\vspace{0.2cm}\\
\,\hspace{1.2cm}+t(tv_1-u_1-1)c_{22}^{(1)}-t(tv_1-u_1+1)c_{33}^{(1)}&=&8t^2(tv_1-u_1-1),\,\,\,\,\,\text{and}\vspace{0.5cm}\\
 4tc_{14}^{(2)}+2(tv_1-u_1-1)c_{34}^{(2)}-2t^2c_{13}^{(2)}\vspace{0.2cm}\\
 \,\hspace{1.2cm}+t(tv_1-u_1+1)c_{22}^{(2)}-t(tv_1-u_1-1)c_{33}^{(2)}
 &=&-8t^2(tv_1-u_1+1).	
\end{array}\smallskip$$
		Adding these equations, it becomes evident that $t=0$. Consequently, $Q_{1\Omega}\mc{R}_0^{(1)}Q_{3\Omega}=\{0\}$.
		
		We now prove that $u_1=0$. Let $A_{0}^\prime$ denote the element of $\mc{A}_0$ obtained by setting $\alpha=\beta=x_1=1$ and $\gamma=x_2=y=0$. That is, 
		$$A_{0}^\prime=\left[\begin{array}{cc|c|c}
		1 & 0 & 1 & u_1\\
		& 1 & 0 & u_2\\ \hline
		& & 1 & 0\\ \hline
		& & & 0
		\end{array}\right].\smallskip$$  Since any element $B_3=(b_{ij}^{(3)})$ in $P_3\mc{A}_0 P_3$ satisfies the equation $2b_{14}^{(3)}-u_1(b_{22}^{(3)}-b_{44}^{(3)})=0,$ it must be the case that the element $C_3\coloneqq (P_3A_{0}^\prime P_3)^2$ satisfies this equation as well. But if $C_3=(c_{ij}^{(3)})$, then $2c_{14}^{(3)}-u_1(c_{22}^{(3)}-c_{44}^{(3)})=8u_1.$
		Therefore, $u_1=0$.

		We deduce that every element in $\mc{A}_0$ admits a matrix representation of the form
		$$\left[\begin{array}{cc|cc}
		\alpha & u_2(\alpha-\gamma)+y & 0 & v_2(\alpha-\beta)+x_2\\
		& \gamma & 0 & 0\\ \hline
		& & \alpha & v_1(\alpha-\beta)+x_1\\
		& & & \beta
		\end{array}\right]\smallskip$$	
		with respect to the reordered basis $\left\{e_2^{(1)},e_1^{(3)},e_1^{(1)},e_1^{(2)}\right\}$ for $\ran(P)$. Since the values of $\alpha$, $\beta$, and $\gamma$ can be selected arbitrarily, an application of Theorem~\ref{At most one non-scalar corner theorem} shows that $\mc{A}_0$ is not projection compressible---a contradiction.
		
		The arguments above demonstrate that $\mc{R}^{(2)}=Q_{1\Omega}\mm_nQ_{3\Omega}$. Thus, $\rad=Q_{1\Omega}\mm_n(Q_{2\Omega}+Q_{3\Omega})$, as required.
		\end{proof}
		\smallskip

\subsection{Type III Algebras with Linked Projections}\label{Subsection: Type III algebras linked}
Let us now consider the projection compressible type III algebras that admit pairs $\Omega\in\mc{F}_{III}$ with distinct mutually linked projections. By Proposition~\ref{case III - Q2 linked prop}, it cannot be the case that all three projections $Q_{1\Omega}$, $Q_{2\Omega}$, and $Q_{3\Omega}$ are mutually linked.

We begin with the case in which there is a pair $\Omega\in\mc{F}_{III}$ with $Q_{2\Omega}$ linked to $Q_{1\Omega}$ or $Q_{3\Omega}$. One example of such an algebra is given by the type~III algebra from Example~\ref{exmp:families of compressible algebras:3}. The following theorem demonstrates that this algebra is in fact, the only example up to transpose equivalence.

\begin{thm}\label{case III - full radical thm}

Let $\mc{A}$ be a projection compressible type III subalgebra of $\mm_n$. If there is a pair $\Omega$ in $\mc{F}_{III}(\mc{A})$ such that $Q_{2\Omega}$ is linked to $Q_{1\Omega}$ or $Q_{3\Omega}$, then $\mc{A}$ is transpose equivalent to the algebra from Example~\ref{exmp:families of compressible algebras:3}. Consequently, $\mc{A}$ is idempotent compressible.

\end{thm}

\begin{proof}

	Let $\Omega$ be as in the statement of the theorem. By replacing $\mc{A}$ with $\mc{A}^{aT}$ if necessary, we may assume without loss of generality that $Q_{1\Omega}$ is the projection that is linked to $Q_{2\Omega}$. In this case, Proposition~\ref{case III - Q2 linked prop}~(i) implies that $n_{1\Omega}=1$ and $Q_{1\Omega}\rad Q_{2\Omega}=Q_{1\Omega}\mm_nQ_{2\Omega}$. It follows that $n_{3\Omega}\geq 2$, and hence  $Q_{3\Omega}$ is unlinked from $Q_{1\Omega}$ and $Q_{2\Omega}$ by Proposition~\ref{case III - Q2 linked prop}~(ii). Thus,  $Q_{2\Omega}\rad Q_{3\Omega}=Q_{2\Omega}\mm_nQ_{3\Omega}$ by Proposition~\ref{case III - all unlinked, n3=1 prop}.
	
	Fix operators $T_1\in Q_{1\Omega}\mm_nQ_{2\Omega}$ and $T_2\in Q_{2\Omega}\mm_nQ_{3\Omega}$. By the observations above, there exist $R_1,R_2$ in $\rad$ such that $Q_{1\Omega}R_1Q_{2\Omega}=T_1$ and $Q_{2\Omega}R_2Q_{3\Omega}=T_2$. With respect to the direct sum decomposition $\cc^n=\ran(Q_{1\Omega})\oplus\ran(Q_{2\Omega})\oplus\ran(Q_{3\Omega})$, we may write 
	$$\begin{array}{ccc}
	R_1=\begin{bmatrix}
	0 & T_1 & R_{13}^{(1)}\\
	0 & 0 & R_{23}^{(1)}\\
	0 & 0 & 0
	\end{bmatrix} & \text{and} & R_2=\begin{bmatrix}
	0 & R_{12}^{(2)} & R_{13}^{(2)}\\
	0 & 0 & T_2\\
	0 & 0 & 0
	\end{bmatrix}
	\end{array}\smallskip$$ for some operators $R_{ij}^{(1)}$ and $R_{ij}^{(2)}$.
	From here it is easy to see that $R_1R_2=T_1T_2\in\rad.$
Since $T_1$ and $T_2$ were arbitrary, we conclude that $\rad$ contains $Q_{1\Omega}\mm_nQ_{3\Omega}$.

It will now be shown that each block $Q_{i\Omega}\rad Q_{j\Omega}$ exists independently in $\rad$. First, write $\mc{A}=\mc{S}\dotplus\rad$ where $\mc{S}$ is semi-simple. Since $Q_{1\Omega}$ and $Q_{2\Omega}$ are linked, $\mc{S}$ is similar to $\cc(Q_{1\Omega}+Q_{2\Omega})+\cc Q_{3\Omega}$ via an upper triangular similarity. From this it follows that $Q_{1\Omega}\mc{S}Q_{2\Omega}=\{0\}$, and hence $\mc{S}$ contains an element $A$ of the form 
$$A=\begin{bmatrix}
0 & 0 & A_{13}\\
0 & 0 & A_{23}\\
0 & 0 & I
\end{bmatrix}.\smallskip$$ 

Using the fact that $Q_{1\Omega}\mm_nQ_{3\Omega}\subseteq \rad$, we deduce that $T_2=R_2A-Q_{1\Omega}R_2AQ_{3\Omega}$ belongs to $\rad$. Since $T_2$ was arbitrary, $\rad$ contains $Q_{2\Omega}\mm_nQ_{3\Omega}$. Consequently, $T_1=R_1-Q_{1\Omega}R_1Q_{3\Omega}-Q_{2\Omega}R_1Q_{3\Omega}$ belongs to $\rad$. This proves that $\rad$ contains $Q_{1\Omega}\mm_nQ_{2\Omega}$, and therefore $$\rad=Q_{1\Omega}\mm_nQ_{2\Omega}\dotplus Q_{1\Omega}\mm_nQ_{3\Omega}\dotplus Q_{2\Omega}\mm_n Q_{3\Omega}.$$ We conclude that $\mc{A}=\cc(Q_{1\Omega}+Q_{2\Omega})+\cc Q_{3\Omega}\dotplus\rad$. Thus, $\mc{A}$ is the algebra from Example~\ref{exmp:families of compressible algebras:3}, as claimed.
\end{proof}
\smallskip

With the proof of Theorem~\ref{case III - full radical thm} complete, we are left only to classify the projection compressible type~III algebras such that $\mc{F}_{III}$ contains a pair $\Omega$ in which $Q_{1\Omega}$ and $Q_{3\Omega}$ linked, yet neither of these projections is linked to $Q_{2\Omega}$. It will be shown in Theorem~\ref{case III - Q1 and Q3 linked, Q2 unlinked theorem} that such an algebra is necessarily the unitization of an $\mc{LR}$-algebra. Unsurprisingly, the proof of this result shares many similarities with that of Theorem~\ref{case II - linked case implies LR theorem}, the analogous result for algebras of type II. One must modify the arguments in the type III case, however, to reflect the absence of a block in $BD(\mc{A})$ of size $2$ or greater. 

The first step in this direction is the following adaptation of Lemma~\ref{extending incomplete radical lemma} to the type~III setting.

\begin{lem}\label{extending incomplete radical lemma - case III}

Let $\mc{A}$ be a projection compressible type III subalgebra of $\mm_4$, and suppose that  $\mc{F}_{III}(\mc{A})$ contains a pair $\Omega=(k,\bigoplus_{i=1}^4\mc{V}_i)$ with $k=3$. Assume that $Q_{1\Omega}$ and $Q_{3\Omega}$ are linked.
	\begin{itemize}
		\item[(i)] If there exist a constant $t\in\cc$ and for each $i\in\{1,2,3\}$, an orthonormal basis $\left\{e_1^{(i)},e_2^{(i)},\ldots, e_{n_{i\Omega}}^{(i)}\right\}$ for $\ran(Q_{i\Omega})$ such that $$\langle Ae_1^{(2)},e_1^{(1)}\rangle=t\left(\langle Ae_1^{(1)},e_1^{(1)}\rangle-\langle Ae_1^{(2)},e_1^{(2)}\rangle\right)\,\,\,\text{for all}\,\,A\in\mc{A},\smallskip$$ then $\langle Ae_1^{(3)},e_1^{(1)}\rangle=-t\langle Ae_1^{(3)},e_1^{(2)}\rangle$ for every $A\in\mc{A}$.\\
		
		\item[(ii)] If there exist a constant $t\in\cc$ and for each $i\in\{1,2,3\}$, an orthonormal basis $\left\{e_1^{(i)},e_2^{(i)},\ldots, e_{n_{i\Omega}}^{(i)}\right\}$ for $\ran(Q_{i\Omega})$ such that $$\langle Ae_1^{(3)},e_1^{(2)}\rangle=t\left(\langle Ae_1^{(2)},e_1^{(2)}\rangle-\langle Ae_1^{(3)},e_1^{(3)}\rangle\right)\,\,\,\text{for all}\,\,A\in\mc{A},\smallskip$$
		then $\langle Ae_1^{(3)},e_i^{(1)}\rangle=t\langle Ae_1^{(2)},e_i^{(1)}\rangle$ for every $A\in\mc{A}$ and each $i\in\{1,2\}$.
	\end{itemize}

\end{lem}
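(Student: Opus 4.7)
The plan is to follow the template of Lemma~\ref{extending incomplete radical lemma}, adapting it to the type III setting. With respect to the basis $\mc{B}=\{e_1^{(1)},e_2^{(1)},e_1^{(2)},e_1^{(3)}\}$ of $\cc^4$, the identities $Q_{i\Omega}\mc{A}Q_{i\Omega}=\cc Q_{i\Omega}$ for each $i$, the linkage of $Q_{1\Omega}$ with $Q_{3\Omega}$, and the non-linkage of $Q_{2\Omega}$ with either force every $A\in\mc{A}$ to admit the matrix representation
$$A=\begin{bmatrix} \alpha & 0 & a_{13} & a_{14} \\ 0 & \alpha & a_{23} & a_{24} \\ 0 & 0 & \beta & a_{34} \\ 0 & 0 & 0 & \alpha \end{bmatrix}$$
with $\alpha,\beta\in\cc$ arbitrary and the remaining entries constrained by the structure of the semi-simple part and $\rad$. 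The hypothesis of part~(i) imposes the extra constraint $a_{13}=t(\alpha-\beta)$, while that of part~(ii) imposes $a_{34}=t(\beta-\alpha)$.

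For part~(i), I will use the same matrix $P$ as in the proof of Lemma~\ref{extending incomplete radical lemma}, namely the $4\times 4$ matrix with $P_{1,1}=P_{4,4}=P_{1,4}=P_{4,1}=1$, $P_{2,2}=P_{3,3}=2$ and zeros elsewhere, so that $\tfrac{1}{2}P$ is an orthogonal projection in $\mm_4$. A direct calculation using the constrained form of $A$ shows that every $B\in P\mc{A}P$ satisfies the linear identity $\langle Be_1^{(2)},e_1^{(1)}\rangle=\tfrac{t}{2}\bigl(\langle Be_2^{(1)},e_2^{(1)}\rangle-\langle Be_1^{(2)},e_1^{(2)}\rangle\bigr)$. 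Because $\mc{A}$ is projection compressible, this identity must also hold for $B=(PAP)^2$. Specialising to a witness $A$ with $\alpha=0$, $\beta=1$, $a_{14}=c$, $a_{34}=d$ and all other free entries set to zero, one computes that the identity collapses to $t(c+td)=0$. Since $c$ and $d$ may be selected independently across suitable members of $\mc{A}$, this forces either $t=0$ or the desired relation $a_{14}=-ta_{34}$ identically. The residual subcase $t=0$ (in which $a_{13}\equiv 0$) is handled as in the proof of Lemma~\ref{extending incomplete radical lemma}, by reordering $\mc{B}$ to expose an upper-triangular form and invoking Theorem~\ref{At most one non-scalar corner theorem} on an appropriate compression.

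For part~(ii), the approach is symmetric: the hypothesis now fixes $a_{34}$, and the desired conclusion is $a_{i4}=ta_{i3}$ for each $i\in\{1,2\}$. After interchanging $e_1^{(1)}$ and $e_2^{(1)}$ if necessary, it suffices to treat $i=1$. Applying the same projection $\tfrac{1}{2}P$, the constrained form of $A$ yields the identity $\langle Be_1^{(3)},e_1^{(2)}\rangle=\tfrac{t}{2}\bigl(\langle Be_1^{(2)},e_1^{(2)}\rangle-\langle Be_2^{(1)},e_2^{(1)}\rangle\bigr)$ for every $B\in P\mc{A}P$, and testing on $B=(PAP)^2$ for a witness $A$ with $\alpha=0$, $\beta=1$, $a_{13}=d$, $a_{14}=c$ and the remaining entries zero produces $t(c-td)=0$. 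This yields the conclusion $a_{14}=ta_{13}$ whenever $t\neq 0$, with the $t=0$ residual case handled exactly as in part~(i). The main obstacle throughout is the algebraic bookkeeping in expanding $(PAP)^2$ so that the relevant entry of the squared matrix collapses to a clean factor of the form $t\bigl(a_{14}\pm ta_{?}\bigr)$; once this is arranged, the projection compressibility of $\mc{A}$ closes the argument precisely as in Lemma~\ref{extending incomplete radical lemma}.
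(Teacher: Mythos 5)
Your main computations agree with the paper's own proof: the paper uses exactly the projection $\tfrac12 P$ you describe, derives the linear relation $2b_{13}-t(b_{22}-b_{33})=0$ (resp. $2b_{34}-t(b_{33}-b_{22})=0$) on $P\mc{A}P$, and evaluates it on $(PAP)^2$ to get $8t(ta_{34}+a_{14})(\alpha-\beta)$ in (i) and $8t(ta_{13}-a_{14})(\alpha-\beta)$ in (ii), which matches your $t(c+td)=0$ and $t(c-td)=0$ after specialising $\alpha=0$, $\beta=1$. So the $t\neq 0$ branch of both parts is fine and is the paper's argument.

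The gap is in the $t=0$ branch of part (ii), which you dismiss as ``handled exactly as in part (i).'' It is not. In part (i) with $t=0$ the hypothesis kills the entry $a_{13}=\langle Ae_1^{(2)},e_1^{(1)}\rangle=t(\alpha-\beta)$, so after reordering the basis to $\{e_2^{(1)},e_1^{(2)},e_1^{(1)},e_1^{(3)}\}$ the lower-left $2\times 2$ corner vanishes identically and Theorem~\ref{At most one non-scalar corner theorem} applies. In part (ii) the hypothesis instead kills $a_{34}$, while $a_{13}$ and $a_{24}$ remain unconstrained; one checks that for \emph{every} way of splitting $\{e_1^{(1)},e_2^{(1)},e_1^{(2)},e_1^{(3)}\}$ into two pairs so that both diagonal corners are non-scalar and $e_1^{(1)},e_1^{(3)}$ sit in the same pair, one of the free entries $a_{13}$ or $a_{24}$ lands in the corner that would need to be zero, so the hypothesis $P_2\mc{A}P_1=\{0\}$ of that theorem fails. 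The paper gets around this with a genuinely different device: the rank-three projection $\tfrac13 P'$ with
$$P'=\begin{bmatrix}\phantom{-}2&\phantom{-}0&-1&-1\\\phantom{-}0&\phantom{-}3&\phantom{-}0&\phantom{-}0\\-1&\phantom{-}0&\phantom{-}2&-1\\-1&\phantom{-}0&-1&\phantom{-}2\end{bmatrix},$$
for which every $B\in P'\mc{A}P'$ satisfies $b_{33}+2b_{31}-b_{43}-2b_{41}-b_{22}=0$ while $(P'AP')^2$ produces $27a_{14}(\beta-\alpha)$ on the left-hand side, forcing $a_{14}=0$. You need to supply an argument of this kind (or some other substitute) for the $t=0$ subcase of (ii); as written, that step would fail.
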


\begin{proof}
	First note that since $Q_{1\Omega}$ and $Q_{3\Omega}$ are linked, Proposition~\ref{case III - Q2 linked prop} implies that neither of these projections is linked to $Q_{2\Omega}$.
	
	We begin by considering the situation of (i). With respect to the basis $\mc{B}=\left\{e_1^{(1)},e_2^{(1)},e_1^{(2)},e_1^{(3)}\right\}$ for $\cc^4$, each $A$ in $\mc{A}$ can be expressed as a matrix of the form
	$$A=\left[\begin{array}{cc|c|c}
	\alpha & 0 & t(\alpha-\beta) & a_{14}\\
	& \alpha & a_{23} & a_{24}\\ \hline
	& & \beta & a_{34}\\ \hline
	& & & \alpha
	\end{array}\right]\smallskip$$
	for some $\alpha$, $\beta,$ and $a_{ij}$ in $\cc$.
	Consider the matrix
	$$P=\begin{bmatrix}
	1 & 0 & 0 & 1\\
	0& 2 & 0 & 0\\
	0 & 0 & 2 & 0\\
	1 & 0 & 0 & 1
	\end{bmatrix}.\smallskip$$
	It is straightforward to check that $\frac{1}{2}P$ is a projection in $\mm_4$ and every element $B=(b_{ij})$ in $P\mc{A}P$ satisfies the equation $2b_{13}-t(b_{22}-b_{33})=0.$ But if $A\in\mc{A}$ is as above, and $C=(c_{ij})$ denotes the operator $(PAP)^2$, then
	$$2c_{13}-t(c_{22}-c_{33})=8t(ta_{34}+a_{14})(\alpha-\beta).\smallskip$$
	Since $\mc{A}$ is projection compressible, $C$ belongs to $P\mc{A}P$, and hence the right-hand side of this equation must be $0$ for all $\mc{A}$. Since $\alpha$ and $\beta$ may be chosen arbitrarily, it follows that either $t=0$ or $a_{14}=-ta_{34}$ for all $A$ in $\mc{A}$. 
	
	If $t=0$, then each $A\in\mc{A}$ can be expressed as a matrix of the form
	$$A=\left[\begin{array}{cc|cc}
	\alpha & a_{23} & 0 & a_{24}\\
	& \beta & 0 & a_{34}\\ \hline
	& & \alpha & a_{14}\\
	& & & \alpha
	\end{array}\right]\smallskip$$
	with respect to the reordered basis $\left\{e_2^{(1)},e_1^{(2)},e_1^{(1)},e_1^{(3)}\right\}$ for $\cc^4.$
	In this case, Theorem~\ref{At most one non-scalar corner theorem} demonstrates that $a_{14}=\langle Ae_1^{(3)},e_1^{(1)}\rangle=0$ for all $A$. Thus, the equation $a_{14}=-ta_{34}$ holds in either case. That is,
	$\langle Ae_1^{(3)},e_1^{(1)}\rangle=-t\langle Ae_1^{(3)},e_1^{(2)}\rangle$ for all $A\in\mc{A}.$ 
	
	We now turn our attention to the proof of (ii). In this setting, every $A$ in $\mc{A}$ admits a matrix representation of the form 
	$$A=\left[\begin{array}{cc|c|c}
	\alpha & 0 & a_{13} & a_{14}\\
	& \alpha & a_{23} & a_{24}\\ \hline
	& & \beta & t(\beta-\alpha)\\ \hline
	& & & \alpha
	\end{array}\right]\smallskip$$ with respect to the basis $\mc{B}=\left\{e_1^{(1)},e_2^{(1)},e_1^{(2)},e_1^{(3)}\right\}$. With $P$ as in (i), every element  $B=(b_{ij})$ in $P\mc{A}P$ satisfies the equation $2b_{34}-t(b_{33}-b_{22})=0.$ It can be verified, however, that if $A$ is as above and $C\coloneqq (PAP)^2=(c_{ij})$, then 
	$$2c_{34}-t(c_{33}-c_{22})=8t(ta_{13}-a_{14})(\alpha-\beta).\smallskip$$
	Once again, it follows that either $t=0$ or $a_{14}=ta_{13}$ for all $A\in\mc{A}$. 
	
	Suppose first that $t=0$. Let $P^\prime$ denote the matrix 
	$$P^\prime=\begin{bmatrix}
	\phantom{-}2 & \phantom{-}0 & -1 & -1\\
	\phantom{-}0 & \phantom{-}3 & \phantom{-}0 & \phantom{-}0\\
	-1 & \phantom{-}0 & \phantom{-}2 & -1\\
	-1 & \phantom{-}0 & -1 & \phantom{-}2
	\end{bmatrix},\smallskip$$
	written with respect to the basis $\mc{B}$, so $\frac{1}{3}P^\prime$ is a projection in $\mm_4$. Direct computations show that if $B=(b_{ij})$ belongs to $P^\prime\mc{A}P^\prime$,  then $b_{33}+2b_{31}-b_{43}-2b_{41}-b_{22}=0.$
	But with $A$ as above and $C\coloneqq (P^\prime AP^\prime)^2=(c_{ij})$, we have 
	$$c_{33}+2c_{31}-c_{43}-2c_{41}-c_{22}=27a_{14}(\beta-\alpha).\smallskip$$
	Since $\alpha$ and $\beta$ may be selected arbitrarily, it follows that $a_{14}=\langle Ae_1^{(3)},e_1^{(1)}\rangle=0$ for all $A$ in $\mc{A}$. Thus, the equation $a_{14}=ta_{13}$ holds in either case. That is,
	$$\langle Ae_1^{(3)},e_1^{(1)}\rangle=t\langle Ae_1^{(2)},e_1^{(1)}\rangle\,\,\text{for all}\,\,A\in\mc{A}.\smallskip$$
	
	Finally, by switching the order of the first two vectors in $\mc{B}$ and repeating the above analysis with respect to this reordered basis, one may deduce that
		$$\langle Ae_1^{(3)},e_2^{(1)}\rangle=t\langle Ae_1^{(2)},e_2^{(1)}\rangle\,\,\text{for all}\,\,A\in\mc{A}.\smallskip$$
		Thus, the proof is complete.
\end{proof}
\smallskip

\begin{thm}\label{case III - Q1 and Q3 linked, Q2 unlinked theorem}

Let $\mc{A}$ be a projection compressible type III subalgebra of $\mm_n$. If there is a pair $\Omega$ in $\mc{F}_{III}(\mc{A})$ such that $Q_{1\Omega}$ and $Q_{3\Omega}$ are linked, then $\mc{A}$ is the unitization of an $\mc{LR}$-algebra. Consequently, $\mc{A}$ is idempotent compressible.

\end{thm}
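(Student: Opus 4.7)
My plan is to mirror the proof of Theorem~\ref{case II - linked case implies LR theorem}, adapting it to accommodate the rank-one middle block of the Type III setting. After replacing $\mc{A}$ with $\mc{A}^{aT}$ if necessary, I may assume $n_{1\Omega}\geq 2$. Since $\ran(Q_{2\Omega})$ is one-dimensional, every subspace of $Q_{1\Omega}\mm_nQ_{2\Omega}$ takes the form $Q_1^\prime\mm_nQ_{2\Omega}$ for a unique projection $Q_1^\prime\leq Q_{1\Omega}$, and similarly every subspace of $Q_{2\Omega}\mm_nQ_{3\Omega}$ has the form $Q_{2\Omega}\mm_nQ_3^\prime$. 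I will pick $Q_1^\prime$ and $Q_3^\prime$ so that $Q_{1\Omega}\rad Q_{2\Omega}=Q_1^\prime\mm_nQ_{2\Omega}$ and $Q_{2\Omega}\rad Q_{3\Omega}=Q_{2\Omega}\mm_nQ_3^\prime$, and set $Q_i^{\prime\prime}:=Q_{i\Omega}-Q_i^\prime$ for $i\in\{1,3\}$. The target algebra is $\mc{A}_0:=(Q_1^\prime+Q_{2\Omega})\mm_n(Q_{2\Omega}+Q_3^\prime)+\cc I$, itself the unitization of an $\mc{LR}$-algebra; since such unitizations form a similarity-invariant class, it will suffice to exhibit a similarity taking $\mc{A}$ to $\mc{A}_0$.

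Next, Theorem~\ref{every alg is similar to BD plus rad theorem} will supply fixed matrices $T_1\in Q_1^{\prime\prime}\mm_nQ_{2\Omega}$ and $T_2\in Q_{2\Omega}\mm_nQ_3^{\prime\prime}$ such that, writing $BD(A)=\alpha(Q_{1\Omega}+Q_{3\Omega})+\beta Q_{2\Omega}$, each $A\in\mc{A}$ obeys $Q_1^{\prime\prime}AQ_{2\Omega}=(\alpha-\beta)T_1$ and $Q_{2\Omega}AQ_3^{\prime\prime}=(\beta-\alpha)T_2$. Using Theorem~\ref{svd lem} together with the remarks that follow it, I will then choose orthonormal bases $\{e_1^{(i)},\ldots,e_{n_{i\Omega}}^{(i)}\}$ for each $\ran(Q_{i\Omega})$, respecting the splittings $Q_i^\prime+Q_i^{\prime\prime}$, in which $T_1=s_1\,e_1^{(1)}\otimes e_1^{(2)*}$ and $T_2=s_2\,e_1^{(2)}\otimes e_1^{(3)*}$ for scalars $s_1,s_2\geq 0$.

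The principal technical step, and the main obstacle, is to pin down the $(1,3)$ block of $\mc{A}$ via Lemma~\ref{extending incomplete radical lemma - case III}. For each $i\in\{1,\ldots,\ell_1\}$ with $\ell_1=\rank(Q_1^{\prime\prime})$, each $i^\prime\in\{1,\ldots,n_{3\Omega}\}$, and any auxiliary $j\neq i$, I will compress $\mc{A}$ to the projection onto $\vee\{e_i^{(1)},e_j^{(1)},e_1^{(2)},e_{i^\prime}^{(3)}\}$; verifying the block structure shows this compression is a projection compressible Type III subalgebra of $\mm_4$ fitting the hypotheses of Lemma~\ref{extending incomplete radical lemma - case III} with $k=3$, the parameter of part (i) being $s_1\delta_{i,1}$. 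Part (i) then gives $\langle Ae_{i^\prime}^{(3)},e_i^{(1)}\rangle=-s_1\delta_{i,1}\langle Ae_{i^\prime}^{(3)},e_1^{(2)}\rangle$; collecting these identities over all $(i,i^\prime)$ produces the matrix identity $Q_1^{\prime\prime}AQ_{3\Omega}=-T_1\,Q_{2\Omega}AQ_{3\Omega}$. A parallel sweep, compressing onto $\vee\{e_i^{(1)},e_{i^*}^{(1)},e_1^{(2)},e_j^{(3)}\}$ and invoking part (ii) of the same lemma with the varying vector now drawn from $\ran(Q_3^{\prime\prime})$, will yield $Q_{1\Omega}AQ_3^{\prime\prime}=(Q_{1\Omega}AQ_{2\Omega})T_2$. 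The delicate work lies in confirming that each $4$-dimensional compression genuinely fits the hypotheses of the lemma (with the correct $k$ and linkage pattern) and in reassembling the entry-wise constraints into these clean matrix identities.

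To finish, I will set $S:=I-T_1-T_2$; the relations $T_1^2=T_2^2=T_2T_1=0$ give $S^{-1}=I+T_1+T_2+T_1T_2$. A direct block computation, structurally identical to the one in Theorem~\ref{case II - linked case implies LR theorem} but with the middle block now scalar, will show that $S^{-1}\mc{A}S$ is reduced block upper triangular with every radical entry supported inside $(Q_1^\prime+Q_{2\Omega})\mm_n(Q_{2\Omega}+Q_3^\prime)$. Applying Lemma~\ref{middle block unlinked implies good radical lemma}(ii) to the conjugated algebra then identifies $S^{-1}\mc{A}S$ with $\mc{A}_0$, exhibiting $\mc{A}$ as similar to the unitization of an $\mc{LR}$-algebra and hence as idempotent compressible.
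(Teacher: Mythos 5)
Your proposal is correct and follows essentially the same route as the paper's proof: the same reduction via $\mc{A}^{aT}$, the same use of Theorem~\ref{every alg is similar to BD plus rad theorem} to produce $T_1$ and $T_2$, the same application of Lemma~\ref{extending incomplete radical lemma - case III}(i) and (ii) to $4\times 4$ compressions to derive $Q_1^{\prime\prime}AQ_{3\Omega}=-T_1Q_{2\Omega}AQ_{3\Omega}$ and $Q_{1\Omega}AQ_3^{\prime\prime}=(Q_{1\Omega}AQ_{2\Omega})T_2$, and the same similarity $S=I-T_1-T_2$ followed by Lemma~\ref{middle block unlinked implies good radical lemma}(ii). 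Your extra normalization of $T_1$ and $T_2$ to single-entry form via Theorem~\ref{svd lem} is harmless (and essentially automatic since $Q_{2\Omega}$ has rank one) but otherwise the argument is the paper's.
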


\begin{proof}

	Let $\Omega$ be a pair in $\mc{F}_{III}(\mc{A})$ such that $Q_{1\Omega}$ and $Q_{3\Omega}$ are linked. By replacing $\mc{A}$ with $\mc{A}^{aT}$ if necessary, we will assume that $n_{1\Omega}=\max\{n_{1\Omega},n_{3\Omega}\}\geq 2$. Note that by Proposition~\ref{case III - Q2 linked prop}, neither of these projections is linked to $Q_{2\Omega}$.
	
	By Theorem~\ref{structure of modules over Mn}, there are subprojections $Q_1^\prime\leq Q_{1\Omega}$ and $Q_3^\prime\leq Q_{3\Omega}$ such that $$\begin{array}{lr}Q_{1\Omega}\rad Q_{2\Omega}=Q_1^\prime \mm_n Q_{2\Omega} & \text{and}\vspace{0.3cm}\\
	 Q_{2\Omega}\rad Q_{3\Omega}=Q_{2\Omega}\mm_n Q_3^\prime.\end{array}\smallskip$$
	As in the proof of Theorem~\ref{case II - linked case implies LR theorem}, we will show that $\mc{A}$ is similar to $$\mc{A}_0\coloneqq (Q_1^\prime+Q_{2\Omega})\mm_n (Q_{2\Omega}+Q_3^\prime)+\cc I,\smallskip$$ and hence that $\mc{A}$ is the unitization of an $\mc{LR}$-algebra. To show that this is the case, we must first determine the structure of $Q_{1\Omega}\mc{A}Q_{3\Omega}$.
	
	 Define projections $Q_1^{\prime\prime}\coloneqq Q_{1\Omega}-Q_1^\prime$ and $Q_3^{\prime\prime}\coloneqq Q_{3\Omega}-Q_3^\prime$. For each $i\in\{1,3\}$, let $\left\{e_1^{(i)},e_2^{(i)},\ldots, e_{n_{i\Omega}}^{(i)}\right\}$ be an orthonormal basis for $\ran(Q_{i\Omega})$ such that if $Q_i^{\prime\prime}\neq 0$, then $$\ran(Q_i^{\prime\prime})=\vee\left\{e_{1}^{(i)},e_{2}^{(i)}, \ldots, e_{\ell_i}^{(i)}\right\}\smallskip$$ for some index $\ell_i\in\{1,2,\ldots, n_{i\Omega}\}$. Furthermore, let $e_1^{(2)}$ be a unit vector in $\ran(Q_{2\Omega})$.  Since $\mc{A}$ is similar to $BD(\mc{A})\dotplus\rad$ via an upper triangular similarity, there are matrices $T_1\in Q_1^{\prime\prime}\mm_nQ_{2\Omega}$ and $T_2\in Q_{2\Omega}\mm_nQ_3^{\prime\prime}$ such that for each $A\in\mc{A}$,
		$$\begin{array}{rl}Q_1^{\prime\prime}AQ_{2\Omega}=(Q_1^{\prime\prime}AQ_1^{\prime\prime})T_1-T_1(Q_{2\Omega}AQ_{2\Omega})\phantom{.} & \text{and}\vspace{0.33cm}\\
		Q_{2\Omega}AQ_3^{\prime\prime}=(Q_{2\Omega}AQ_{2\Omega})T_2-T_2(Q_3^{\prime\prime}AQ_3^{\prime\prime}).\end{array}\smallskip$$

	We may obtain information on the structure of $Q_1^{\prime\prime}\mc{A}Q_{3\Omega}$ by appealing to Lemma~\ref{extending incomplete radical lemma - case III}. Of course, there is little to be said when $Q_1^{\prime\prime}=0$. If instead $Q_1^{\prime\prime}\neq 0$, fix arbitrary indices $i\in\{1,2,\ldots, \ell_1\}$, \linebreak $i^\prime\in\{1,2,\ldots, n_{1\Omega}\}\setminus\{i\}$, and $j\in\{1,2,\ldots, n_{3\Omega}\}$. Define $\mc{B}=\left\{e_i^{(1)},e_{i^\prime}^{(1)},e_1^{(2)},e_j^{(3)}\right\}$ and let $P$ denote the orthogonal projection onto the span of $\mc{B}$. With respect to the basis $\mc{B}$ for $\ran(P)$, every member of $P\mc{A}P$ can be written as a matrix of the form 
	$$\left[\begin{array}{cc|c|c}
	\alpha & 0 & t_i^{(1)}(\alpha-\beta) & a_{14}\\
	& \alpha & a_{23} & a_{24}\\ \hline
	& & \beta & a_{34}\\ \hline
	& & & \alpha
	\end{array}\right],$$
	where $t_i^{(1)}\coloneqq \langle T_1e_1^{(2)},e_i^{(1)}\rangle$.  Thus, an application Lemma~\ref{extending incomplete radical lemma - case III} (i) demonstrates that $$\langle Ae_j^{(3)},e_i^{(1)}\rangle=-t_i^{(1)}\langle Ae_j^{(3)},e_1^{(2)}\rangle\,\,\text{for all}\,\,A\in\mc{A}.\smallskip$$ Since the indices $i$, $i^\prime$, and $j$ were selected arbitrarily, it follows that $$Q_1^{\prime\prime}AQ_{3\Omega}=-T_1Q_{2\Omega}AQ_{3\Omega}\,\,\,\text{for all}\,\,A\in\mc{A}.\smallskip$$
	
	A similar argument can be used to determine the structure of $Q_{1\Omega}\mc{A}Q_3^{\prime\prime}$. Indeed, there is nothing to be said when $Q_3^{\prime\prime}=0$. If instead $Q_3^{\prime\prime}\neq 0$, choose distinct indices $i$ and $i^\prime$ in $\{1,2,\ldots, n_{1\Omega}\}$, and let $j\in\{1,2,\ldots, \ell_3\}$ be arbitrary. Define $\mc{C}=\left\{e_i^{(1)},e_{i^\prime}^{(1)},e_1^{(2)},e_j^{(3)}\right\}$, and let $P^\prime$ denote the orthogonal projection onto the span of~$\mc{C}$. The compression $P^\prime \mc{A}P^\prime$ is an algebra of the form described in Lemma~\ref{extending incomplete radical lemma - case III} (ii), and hence this result indicates that each $A\in\mc{A}$ satisfies the equation $$\langle Ae_j^{(3)},e_i^{(1)}\rangle=t_j^{(2)}\langle Ae_1^{(2)},e_i^{(1)}\rangle,\smallskip$$ where $t_j^{(2)}\coloneqq \langle T_2e_j^{(3)},e_1^{(2)}\rangle.$ Again, the fact that $i$, $i^\prime$, and $j$ were chosen arbitrarily implies that $Q_{1\Omega}AQ_3^{\prime\prime}=Q_{1\Omega}AQ_{2\Omega}T_2$ for all $A\in\mc{A}.$

	Our findings thus far indicate that with respect to the decomposition 
	$$\cc^n=\ran(Q_1^{\prime\prime})\oplus\ran(Q_1^\prime)\oplus\ran(Q_{2\Omega})\oplus\ran(Q_3^{\prime\prime})\oplus\ran(Q_3^\prime),\smallskip$$ each $A$ in $\mc{A}$ can be expressed as a matrix of the form 
	$$A=\left[\begin{array}{c|c|c|c|c}
	\alpha I & 0 & (\alpha-\beta)T_1 & A_{14}
	 & A_{15}\\ \hline
	& \alpha I & J_1 & A_{24} & A_{25}\\ \hline
	& & \beta & (\beta-\alpha)T_2 & J_2\\ \hline
	& & & \alpha I & 0\\ \hline
	& & & & \alpha I
	\end{array}\right],$$
	for some $\alpha,\beta\in\cc$, $J_1\in Q_1^\prime\rad Q_{2\Omega}$, $J_2\in Q_{2\Omega}\rad Q_3^\prime$, and operators $A_{ij}$ satisfying the equations 
	$$\begin{array}{ccc} \left[\begin{array}{c|c} A_{14} & A_{15}\end{array}\right]=-T_1\left[\begin{array}{c|c} (\beta-\alpha)T_2 & J_2 \end{array}\right] & \text{\,\,\,and\,\,\,} & \left[\begin{array}{c}
	A_{14}\\ \hline
	A_{24}
	\end{array}\right]=\left[\begin{array}{c}
	(\alpha-\beta)T_1\\ \hline
	J_1
	\end{array}\right]T_2.\end{array}\smallskip$$
	
	To see that $\mc{A}$ is similar to $\mc{A}_0=(Q_1^{\prime}+Q_{2\Omega})\mm_n(Q_{2\Omega}+Q_3^\prime)+\cc I$, and hence is the unitization of an $\mc{LR}$-algebra, consider the operator
	$S\coloneqq I-T_1-T_2.$  This map is invertible with $S^{-1}=I+T_1+T_2+T_1T_2$. In addition, we have that for $A$ as above, 
	$$S^{-1}AS=\left[\begin{array}{c|c|c|c|c}
	\alpha I & 0 & 0 & 0
	 & 0\\ \hline
	& \alpha I & J_1 & 0 & A_{25}\\ \hline
	& & \beta & 0 & J_2\\ \hline
	& & & \alpha I & 0\\ \hline
	& & & & \alpha I
	\end{array}\right].$$
	It is now apparent that $S^{-1}\mc{A}S$ is a type III algebra that admits a reduced block upper triangular form with respect to the above decomposition. Since 
	 $$\begin{array}{ll}Q_{1\Omega}Rad(S^{-1}\mc{A}S)Q_{2\Omega}=Q_1^\prime\rad Q_{2\Omega}=Q_1^\prime\mm_nQ_{2\Omega} & \text{and}\vspace{0.2cm}\\ Q_{2\Omega}Rad(S^{-1}\mc{A}S)Q_{3\Omega}=Q_{2\Omega}\rad Q_3^\prime=Q_{2\Omega}\mm_nQ_3^\prime,\end{array}\smallskip$$ 
	 
	\noindent it follows from Lemma~\ref{middle block unlinked implies good radical lem} (ii) that 
	$S^{-1}\mc{A}S=(Q_1^{\prime}+Q_{2\Omega})\mm_n(Q_{2\Omega}+Q_3^\prime)+\cc I=\mc{A}_0.$
\end{proof}
\smallskip

	\section[7]{Main Result and Applications}
	
	\subsection{The Main Result}
	The analysis carried out in the preceding sections provides a description of the unital projection compressible subalebras of $\mm_n$, $n\geq 4$ up to transpose similarity. Since every such algebra was also seen to admit the idempotent compression property, it follows that the two notions of compressibility coincide for unital algebras in this setting. We therefore obtain the following theorem, the main result of this paper.
	
	\begin{thm}\label{main result}
	
		Let $\mc{A}$ be a unital subalgebra of $\mm_n$ for some integer $n\geq 4$. The following are equivalent.
		\begin{itemize}
			\item[(i)]$\mc{A}$ is projection compressible;
			\item[(ii)]$\mc{A}$ is idempotent compressible;
			\item[(iii)]$\mc{A}$ is the unitization of an $\mc{LR}$-algebra, or $\mc{A}$ is transpose similar to one of the algebras from\linebreak  Example~\ref{exmp:families of compressible algebras}.	

		\end{itemize}	
	
	\end{thm}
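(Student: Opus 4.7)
The plan is to prove the chain $(iii)\Rightarrow(ii)\Rightarrow(i)\Rightarrow(iii)$. The first two implications are essentially free, while the third is simply the assembly statement for the case analysis carried out in Sections~\ref{case1}--\ref{case3}.

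For $(ii)\Rightarrow(i)$, note that every orthogonal projection is an idempotent, so closure under idempotent compression implies closure under projection compression. For $(iii)\Rightarrow(ii)$, the unitization of an $\mc{LR}$-algebra is idempotent compressible by \cite[Corollary~2.0.9]{CMR1} combined with \cite[Proposition~2.0.4]{CMR1}, while the three families in Example~\ref{exmp:families of compressible algebras} are idempotent compressible by construction. Since the class of idempotent compressible algebras is closed under transpose similarity (as recalled in the introduction), this implication follows.

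The main implication is $(i)\Rightarrow(iii)$. Given a unital projection compressible $\mc{A}\subseteq\mm_n$ with $n\geq 4$, the comments following Definition~\ref{definition of reduced block upper triangular} allow us to assume that $\mc{A}$ is in reduced block upper triangular form with respect to an orthogonal decomposition of $\cc^n$. Corollaries~\ref{unique integer k corollary} and \ref{special case of uniqueness corollary} then place $\mc{A}$ into exactly one of the three types defined at the end of Section~\ref{classification}. If $\mc{A}$ is of type~I, then either some $\Omega\in\mc{F}_I(\mc{A})$ has $d\in\{1,n-1\}$ and Proposition~\ref{prop k0=2 or k0=n} applies, or else Theorem~\ref{unlinked type I general case theorem} and Theorem~\ref{big thm for linked type I algebras} cover the unlinked and linked subcases respectively. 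If $\mc{A}$ is of type~II, then Proposition~\ref{case2 - k=1 or k=m prop} handles the extreme case $k\in\{1,m\}$, while Theorem~\ref{big thm for unlinked type II algebras} and Theorem~\ref{case II - linked case implies LR theorem} dispose of the remaining subcases. If $\mc{A}$ is of type~III, then the three possible linkage configurations of $Q_{1\Omega}$, $Q_{2\Omega}$, $Q_{3\Omega}$ are dealt with by Theorem~\ref{big theorem for type III mutually unlinked}, Theorem~\ref{case III - Q2 linked theorem}, and Theorem~\ref{case III - Q1 and Q3 linked, Q2 unlinked theorem}, respectively.

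In every branch, the conclusion is that $\mc{A}$ is either the unitization of an $\mc{LR}$-algebra or is transpose similar (often transpose equivalent) to one of the algebras in Example~\ref{exmp:families of compressible algebras}, which yields $(iii)$. There is no real obstacle at this final stage, since the difficulty has been distributed across Sections~\ref{case1}--\ref{case3}; the proof of Theorem~\ref{main result} amounts to verifying that the preceding case analysis is exhaustive and that each listed outcome falls into condition~$(iii)$.
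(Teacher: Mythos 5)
Your proposal is correct and follows exactly the route the paper takes: the theorem is the assembly of the case analysis from \S3--\S5, with $(iii)\Rightarrow(ii)$ coming from the known idempotent compressibility of the listed algebras, $(ii)\Rightarrow(i)$ being trivial, and $(i)\Rightarrow(iii)$ obtained by sorting $\mc{A}$ into types I--III and invoking the corresponding classification results, precisely as you list them. The only (harmless) omission is the degenerate type~I case $d=n$, where $\mc{A}=\cc I$ is itself the unitization of an $\mc{LR}$-algebra.
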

	
	Combining Theorem~\ref{main result} and \cite[Theorem~6.0.1]{CMR1}, we conclude that the two notions of compressibility coincide for all unital algebras.
	
	\begin{thm}\label{P comp = E comp thm}
		A unital subalgebra $\mc{A}$ of $\mm_n$, $n\geq 2$, is projection compressible if and only if it is idempotent compressible.
	\end{thm}

	\noindent In light of Theorem~\ref{P comp = E comp thm}, we make the following definition.
	
		\begin{defn}
		A unital subalgebra $\mc{A}$ of $\mm_n$ is \textit{compressible} if $\mc{A}$ is projection compressible (equivalently, if $\mc{A}$ is idempotent compressible).
		\end{defn}
		
		It is worth noting that nearly all of the classification results from \S4-6 describe the various unital compressible subalgebras of $\mm_n$ up to transpose \textit{equivalence}, not just transpose similarity. Indeed, the only instance in which a description up to transpose equivalence was not achieved was in Theorem~\ref{big theorem for type III mutually unlinked}. There it was shown that a projection compressible type III algebra is either transpose equivalent to the type III algebra from Example~\ref{exmp:families of compressible algebras:1}, or transpose similar to the algebra from Example~\ref{exmp:families of compressible algebras:2}. 
		
		The following proposition describes the similarity orbit of the algebra from Example~\ref{exmp:families of compressible algebras:2} up to unitary equivalence, thereby providing a characterization of the (unital) compressible subalgebras of $\mm_n$, $n\geq 4$, up to transpose equivalence.\smallskip

		\begin{prop}\label{similar to second example implies unitarily equivalent to At}
		Let $n\geq 3$ be an integer, let $Q_1$ and $Q_2$ be mutually orthogonal rank-one projections in $\mm_n$, and define $Q_3\coloneqq I-Q_1-Q_2.$ Let $\{e_1,e_2,\ldots, e_n\}$ be an orthonormal basis for $\cc^n$ such that $e_1\in \ran(Q_1)$, $e_2\in \ran(Q_2)$, and $e_i\in \ran(Q_3)$ for all $i\geq 3$. If $$\begin{array}{l}\mc{A}\coloneqq \cc Q_1+\cc Q_2+\cc Q_3+(Q_1+Q_2)\mm_nQ_3\vspace{0.2cm}\\
		\left.\right.\hspace{0.36cm}=\left\{\begin{bmatrix}
			\alpha & 0 & M_{13}\\
			0 & \beta & M_{23}\\
			0 & 0 & \gamma I 
			\end{bmatrix}:\alpha,\beta,\gamma\in\cc,M_{ij}\in Q_i\mm_nQ_j\right\}\end{array}$$ denotes the compressible algebra from Example~\ref{exmp:families of compressible algebras:2}, and $\mc{B}$ is an algebra that is similar to $\mc{A}$, then there is some $t\in\cc$ such that $\mc{B}$ is unitarily equivalent to $$\begin{array}{l}
			\mc{A}_t\coloneqq \left\{A+t\left(\langle Ae_1,e_1\rangle-\langle Ae_2,e_2\rangle\right)e_1\otimes e_2^*:A\in\mc{A}\right\}\vspace{0.2cm}\\
			\left.\right.\hspace{0.45cm}=\left\{\begin{bmatrix}
			\alpha & t(\alpha-\beta) & M_{13}\\
			0 & \beta & M_{23}\\
			0 & 0 & \gamma I 
			\end{bmatrix}:\alpha,\beta,\gamma\in\cc,M_{ij}\in Q_i\mm_nQ_j\right\}.\end{array}$$
		
		\end{prop}
		
		\begin{proof}

			Suppose that $\mc{B}=S^{-1}\mc{A}S$ for some invertible $S\in\mm_n$. For all indices $i\in\{1,2\}$ and $j\in\{3,4,\ldots, n\}$, define $E_{ij}:=e_i\otimes e_j^*$ and $E_{ij}^\prime:=S^{-1}E_{ij}S.$ Furthermore, define $Q_i^\prime:=S^{-1}Q_iS$ for $i\in\{1,2,3\}$. Observe that $$\mc{B}=S^{-1}\mc{A}S=\mathrm{span}\left\{Q_1^\prime,Q_2^\prime,Q_3^\prime,E_{ij}^\prime:i\in\{1,2\}, j\in\{3,4,\ldots,n\}\right\}.$$
			
			Let $\{f_1,f_2,\ldots,f_n\}$ be an orthonormal basis for $\cc^n$ such that $f_1$ and $f_2$ belong to $\ker (Q_3^\prime)$. Let $P_1$, $P_2$, and $P_3$ denote the orthogonal projections onto $\cc f_1$, $\cc f_2$, and $\mathrm{span}\left\{f_i:i\geq 3\right\}=\ker(Q_3^\prime)^\perp$, respectively. Since $P_3Q_3^\prime P_3=P_3$ and $Q_3^\prime Q_1^\prime=Q_3^\prime Q_2^\prime=0$, we have that $Q_1^\prime=(P_1+P_2)Q_1^\prime$ and $Q_2^\prime=(P_1+P_2)Q_2^\prime$. 
			
			Note that since $Q_1^\prime Q_2^\prime=Q_2^\prime Q_1^\prime=0,$ we may adjust the first two basis vectors if necessary to assume that $Q_1^\prime$ and $Q_2^\prime$ are upper triangular with respect to $\{f_1,f_2,\ldots, f_n\}$, and $\langle Q_i^\prime f_j,f_j\rangle= \delta_{ij}$ for $i,j\in\{1,2\}$. Thus, there are matrices $X_{ij}$, $Y_{ij}$, and $Z_{ij}$, and a constant $t\in\cc$ such that with respect to the decomposition $\cc^n=\ran(P_1)\oplus\ran(P_2)\oplus\ran(P_3),$
			$$\begin{array}{cccc}
			Q_1^\prime=\left[\begin{array}{c|c|c}
			1 & t & X_{13}\\ \hline
			0 & 0 & 0\\ \hline
			0 & 0 & 0
			\end{array}\right], & Q_2^\prime=\left[\begin{array}{c|c|c}
			0 & -t & Y_{13}\\ \hline
			0 & \phantom{-}1 & Y_{23}\\ \hline
			0 & \phantom{-}0 & 0	
			\end{array}\right], & \text{and} & Q_3^\prime=\left[\begin{array}{c|c|c}
			0 & 0 & Z_{13}\\ \hline
			0 & 0 & Z_{23}\\ \hline
			0 & 0 & I 
			\end{array}\right].
			\end{array}\smallskip$$
					
			Finally, since $E_{ij}^\prime=(Q_1^\prime+Q_2^\prime)E_{ij}^\prime Q_3^\prime$, we have that $E_{ij}^\prime=(P_1+P_2)E_{ij}^\prime P_3$ for all indices $i$ and $j$. Dimension considerations then imply that 
			$$\mathrm{span}\left\{E_{ij}^\prime:i\in\{1,2\},j\in\{3,4,\ldots, n\}\right\}=(P_1+P_2)\mm_nP_3,\smallskip$$
			and therefore $$\mc{B}=\left\{B+t\left(\langle Bf_1,f_1\rangle-\langle Bf_2,f_2\rangle\right)f_1\otimes f_2^*:B\in\cc P_1+\cc P_2+\cc P_3+(P_1+P_2)\mm_nP_3\right\}.\smallskip$$
			 We conclude that $\mc{A}_t=U^*\mc{B}U$ where $U\in\mm_n$ is the unitary satisfying $Ue_i=f_i$.
		\end{proof}
		\smallskip
		
		\begin{cor}\label{main result, transpose equivalence}
		Let $n\geq 4$ be an integer, and let $\mc{A}$ be a unital subalgebra of $\mm_n$.  The following are equivalent.
				\begin{itemize}
					\item[(i)]$\mc{A}$ is compressible;
					
					\item[(ii)]$\mc{A}$ is the unitization of an $\mc{LR}$-algebra, or $\mc{A}$ is transpose equivalent to the algebra from Example~\ref{exmp:families of compressible algebras:1}, the algebra from Example~\ref{exmp:families of compressible algebras:3}, or the algebra $\mc{A}_t$ from Proposition~\ref{similar to second example implies unitarily equivalent to At}. \bigskip
				
				\end{itemize}
		
		\end{cor}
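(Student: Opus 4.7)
The plan is to deduce this corollary from Theorem~\ref{main result} by refining the ``transpose similar'' clauses in two ways: upgrading ``transpose similar'' to ``transpose equivalent'' in the cases of Examples~\ref{exmp:families of compressible algebras:1} and~\ref{exmp:families of compressible algebras:3}, and replacing the transpose similarity orbit of Example~\ref{exmp:families of compressible algebras:2} by the one-parameter family $\{\mc{A}_t\}_{t\in\cc}$ produced by Lemma~\ref{similar to second example implies unitarily equivalent to At}.

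For the implication (ii) $\Rightarrow$ (i), I would handle each class separately. Unitizations of $\mc{LR}$-algebras are known to be idempotent (hence projection) compressible, and the three algebras in Example~\ref{exmp:families of compressible algebras} are compressible by assumption of that example. Since the class of projection compressible algebras is closed under transpose equivalence, nothing more is needed for those three classes. The only algebra requiring additional justification is $\mc{A}_t$. The construction in the proof of Lemma~\ref{similar to second example implies unitarily equivalent to At} shows that $\mc{A}_t$ lies in the similarity orbit of the algebra from Example~\ref{exmp:families of compressible algebras:2}; since the set of idempotent compressible algebras is closed under similarity (as noted in \S\ref{intro}) and the two notions of compressibility coincide for unital algebras by Theorem~\ref{main result}, $\mc{A}_t$ is compressible.

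For the reverse implication (i) $\Rightarrow$ (ii), I would invoke Theorem~\ref{main result} and break into cases. If $\mc{A}$ is the unitization of an $\mc{LR}$-algebra, we are done. Otherwise, $\mc{A}$ is transpose similar to one of the three unital algebras from Example~\ref{exmp:families of compressible algebras}. Here I would revisit the classification theorems in \S3--\S5 and note that the conclusions obtained there were already stated up to transpose (indeed, unitary) equivalence in most cases: Theorems~\ref{unlinked type I general case theorem}, \ref{big thm for unlinked type II algebras}, and the first alternative of Theorem~\ref{big theorem for type III mutually unlinked} all give transpose equivalence to Example~\ref{exmp:families of compressible algebras:1}, while Theorem~\ref{case III - Q2 linked theorem} gives transpose equivalence to Example~\ref{exmp:families of compressible algebras:3}. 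In these situations, the transpose similarity provided by Theorem~\ref{main result} automatically upgrades to transpose equivalence.

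The only remaining and nontrivial case is the second alternative of Theorem~\ref{big theorem for type III mutually unlinked}, in which $\mc{A}$ is transpose similar to the algebra from Example~\ref{exmp:families of compressible algebras:2}. This means $\mc{A}$ or $\mc{A}^T$ is similar to that algebra, and applying Lemma~\ref{similar to second example implies unitarily equivalent to At} to whichever of the two holds produces a $t\in\cc$ such that $\mc{A}$ or $\mc{A}^T$ is unitarily equivalent to $\mc{A}_t$. Either way, $\mc{A}$ is transpose equivalent to $\mc{A}_t$. The main obstacle is thus already subsumed by Lemma~\ref{similar to second example implies unitarily equivalent to At}; once that lemma is available, the present corollary is essentially a consolidation of the classification results from \S3--\S5.
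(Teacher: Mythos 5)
Your proposal is correct and follows essentially the same route as the paper: the paper leaves the corollary's proof implicit, relying on the remarks preceding Lemma~\ref{similar to second example implies unitarily equivalent to At} (that the classification theorems of \S3--5 already yield transpose equivalence except in the second alternative of Theorem~\ref{big theorem for type III mutually unlinked}) together with that lemma to convert the remaining transpose-similarity statement into transpose equivalence to some $\mc{A}_t$. Your explicit case analysis, including the observation that $\mc{A}_t$ is compressible because it lies in the similarity orbit of the algebra from Example~\ref{exmp:families of compressible algebras:2}, is exactly the intended argument.
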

		
		\begin{rmk}
		\upshape{
	The above result, together with Theorem~\ref{main result}, implies that if $\mc{A}$ is transpose similar to an algebra from Theorem~\ref{main result}~(iii), then $\mc{A}$ is transpose equivalent to an algebra from Corollary~\ref{main result, transpose equivalence}~(ii). Indeed, Proposition~\ref{similar to second example implies unitarily equivalent to At} makes this fact explicit for the algebra in Example~\ref{exmp:families of compressible algebras:2}, while in \cite{CMR1} it was shown that the class of $\mc{LR}$-algebras is invariant under transpose similarity. Arguments akin to those in the proof of Proposition~\ref{similar to second example implies unitarily equivalent to At} can be used to show that any algebra transpose similar to the algebra from Example~\ref{exmp:families of compressible algebras:1} (resp. Example~\ref{exmp:families of compressible algebras:3}) is in fact, transpose equivalent to it.\\
		}
		\end{rmk}
	
	\subsection{Applications}\label{Subsection: Applications}
	
	Here we investigate some of the applications of the classification of unital compressible algebras. It follows from Theorem~\ref{P comp = E comp thm} that the class of all such algebras is invariant under similarity and transposition. Using this fact, it is relatively straightforward to determine which unital semi-simple algebras admit the compression property.
	
		\begin{cor}
	
		Let $n\geq 2$ be an integer, and let $\mc{A}$ be a unital, semi-simple subalgebra of $\mm_n$. The following are equivalent:
		\begin{itemize}
			\item[(i)]$\mc{A}$ is compressible;
			
			\item[(ii)]$\mc{A}=\cc I$ or $\mc{A}$ is similar to $\mm_k\oplus\cc I_{n-k}$ for some positive integer $k$.
		\end{itemize}
	
	\end{cor}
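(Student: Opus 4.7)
The plan is to address the two directions separately, using Theorem~\ref{main result} for the constructive direction and Artin--Wedderburn theory together with Theorem~\ref{At most one non-scalar corner theorem} for the structural one.

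For $(ii)\Rightarrow(i)$, the algebra $\cc I$ is trivially compressible. For $k\in\{1,2,\ldots,n\}$ and any orthogonal rank-$k$ projection $P\in\mm_n$, the algebra $\mm_k\oplus\cc I_{n-k}$ is unitarily equivalent to $P\mm_nP+\cc(I-P)$; since $\cc P\subset P\mm_nP$, this equals $P\mm_nP+\cc I$, which is the unitization of the $\mc{LR}$-algebra $P\mm_nP$ and hence compressible by Theorem~\ref{main result} when $n\geq 4$. The cases $n\in\{2,3\}$ follow from the remark in \S\ref{intro} that every subalgebra of $\mm_2$ is compressible and from \cite[Theorem 6.0.1]{CMR1}, respectively.

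For $(i)\Rightarrow(ii)$, assume $\mc{A}$ is compressible, unital, and semi-simple. By Artin--Wedderburn together with the standard classification of unital semi-simple subalgebras of $\mm_n$, we may arrange by similarity that $\mc{A}=\bigoplus_{i=1}^r\mm_{d_i}\otimes I_{e_i}$, acting on $\bigoplus_{i=1}^r\cc^{d_i}\otimes\cc^{e_i}$ with $\sum_i d_ie_i=n$. Writing $P_i$ for the central idempotent of the $i$-th isotypic component, I would, for $n\geq 4$, apply Theorem~\ref{At most one non-scalar corner theorem} repeatedly to mutually orthogonal projections of rank $\geq 2$. First, two distinct indices with $d_i,d_j\geq 2$ would allow the pair $(P_i,P_j)$ to contradict the theorem, so at most one $d_i\geq 2$. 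Next, if $d_1\geq 2$ and $e_1\geq 2$, the rank-$d_1$ projections $I_{d_1}\otimes vv^*$ and $I_{d_1}\otimes ww^*$ for orthonormal $v,w\in\cc^{e_1}$ give a similar contradiction, forcing $e_1=1$. Finally, if $\mc{A}$ has two or more simple summands and is not of the claimed form, one constructs rank-$2$ projections $P=v_1v_1^*+v_2v_2^*$ and $Q=w_1w_1^*+w_2w_2^*$ whose supports mix two appropriately chosen isotypic components (or use two orthogonal vectors within a single component of multiplicity $\geq 2$); then $\mc{A}P\subset\ran P$ yields $Q\mc{A}P=0$, while $P\mc{A}P$ and $Q\mc{A}Q$ are both two-dimensional and non-scalar, again contradicting the theorem. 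What remains is exactly $\mc{A}=\cc I$ or $\mc{A}$ similar to $\mm_k\oplus\cc I_{n-k}$ for some $k\in\{1,2,\ldots,n\}$.

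The hard part will be the commutative sub-case (all $d_i=1$) with two or more isotypic blocks, since the central idempotents $P_i$ themselves have scalar corners $\cc P_i$ and therefore do not contradict Theorem~\ref{At most one non-scalar corner theorem} directly. The key observation that unlocks this case is that when $P$ is a sum of one-dimensional projections onto vectors $v_i$ taken from distinct isotypic components, one has $\mc{A}P\subset\mathrm{span}(v_1,v_2,\ldots)=\ran P$, so any rank-$2$ projection $Q$ with $\ran Q\perp \ran P$ automatically satisfies $Q\mc{A}P=\{0\}$; one then only needs to arrange that $Q\mc{A}Q$ be non-scalar, which is achieved by making $\ran Q$ mix two components as well. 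For $n=2$ every subalgebra is compressible, and the only unital semi-simple subalgebras are $\cc I$, $\cc\oplus\cc$, and $\mm_2$, all of the required form; for $n=3$ one invokes \cite[Theorem 6.0.1]{CMR1} to exclude the diagonal algebra $\cc\oplus\cc\oplus\cc$, the only candidate that would fail the conclusion, completing the classification.
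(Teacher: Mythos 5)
Your proof is correct and follows essentially the same route as the paper's: both reduce $\mc{A}$ by similarity to its block-diagonal (Wedderburn) form — which is legitimate because Theorem~\ref{main result} makes compressibility similarity-invariant for unital algebras — and then apply Theorem~\ref{At most one non-scalar corner theorem} to suitable pairs of mutually orthogonal projections, handling $n=2$ by dimension and $n=3$ by excluding the diagonal algebra via \cite{CMR1}. The only difference is one of exposition: you spell out explicitly the mixed rank-two projections needed in the commutative sub-case (where the paper compresses this into the single assertion that ``with at most one exception, all spaces $\mc{V}_i$ are mutually linked''), and your key observation that $\mc{A}P\subseteq \mathrm{ran}\,P$ forces $Q\mc{A}P=\{0\}$ is exactly the mechanism underlying that assertion.
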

	
	\begin{proof}
	
		Since $\cc I$ and $\mm_k\oplus\cc I_{n-k}$ are unitizations of $\mc{LR}$-algebras, it is obvious that (ii) implies (i). Assume now that (i) holds, so $\mc{A}$ is a unital, semi-simple subalgebra of $\mm_n$ that admits the compression property. Assume as well that $\mc{A}$ is in reduced block upper triangular form with respect to some orthogonal decomposition $\bigoplus_{i=1}^m\mc{V}_i$ of $\cc^n$. By Theorem~\ref{every algebra is similar to an unhinged algebra}, $\mc{A}$ is similar to $\mc{B}\coloneqq BD(\mc{A})$. It therefore suffices to prove that $\mc{B}$ is similar to an algebra of the form prescribed in (ii).
		
		If $n=2$, then $\mc{B}$ is equal to $\cc I$, $\cc\oplus \cc$, or $\mm_2$, and hence $\mc{B}$ is of the desired form. If instead $n=3$, then either $\mc{B}$ is equal to $\cc I$ or $\mm_3$, or $\mc{B}$ is unitarily equivalent to $\cc\oplus \cc I_2$ or $\mm_2\oplus\cc$. Indeed, the only other block diagonal subalgebra of $\mm_3$ is the algebra of all $3\times 3$ diagonal matrices. This algebra was shown to lack the compression property in \cite[Theorem~5.2.6]{CMR1}, and hence cannot be similar to $\mc{B}$. Again we see that (ii) holds.
		
		Suppose now that $n\geq 4$. By Theorem~\ref{At most one non-scalar corner theorem}, there is at most one space $\mc{V}_i$ of dimension  $2$ or greater. If such a space exists, we may reindex the sum $\bigoplus_{i=1}^m\mc{V}_i$ if necessary and assume that $\dim(\mc{V}_1)=k\geq 2$. Theorem~\ref{At most one non-scalar corner theorem} then implies that $\mc{V}_i$ is linked to $\mc{V}_j$ for all $i,j\geq 2$, so $\mc{B}=\mm_k\oplus \cc I_{n-k}$. If instead $\dim{\mc{V}}_i=1$ for all $i$, then Theorem~\ref{At most one non-scalar corner theorem} indicates that with at most one exception, all spaces $\mc{V}_i$ are mutually linked. Thus, $\mc{B}$ is equal to $\cc I$ or is unitarily equivalent to $\cc\oplus \cc I_{n-1}$.
	\end{proof}
	\smallskip
	
		Theorem~\ref{main result} can also be used to quickly identify the operators $T\in\mm_n$ such that $Alg(T,I)$---the unital algebra generated by $T$---is compressible.
	
		\begin{cor}\label{algebra generated by T and I is compressible}
	
		Let $n\geq 2$ be an integer, and let $T\in\mm_n$. The following are equivalent:	
		\begin{itemize}
			\item[(i)]$Alg(T,I)$ is compressible;
			
			\item[(ii)]$Alg(T,I)$ is the unitization of an $\mc{LR}$-algebra;
			
			\item[(iii)]$T\in span\{I,R\}$ for some $R\in\mm_n$ of rank $1$.
		\end{itemize}	
	
	\end{cor}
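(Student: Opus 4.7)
The plan is to establish the cycle (iii)$\Rightarrow$(ii)$\Rightarrow$(i)$\Rightarrow$(iii). For (iii)$\Rightarrow$(ii), write $R=xy^*$ so that $R^2=(y^*x)R$; then $Alg(T,I)=\cc I+\cc R$, and if $P$ and $Q$ denote the orthogonal projections onto $\cc x$ and $\cc y$ respectively, one has $P\mm_nQ=\cc R$, exhibiting $Alg(T,I)$ as the unitization of an $\mc{LR}$-algebra. The implication (ii)$\Rightarrow$(i) follows immediately from \cite[Corollary~2.0.9]{CMR1}, \cite[Proposition~2.0.4]{CMR1}, and the fact that every idempotent compressible algebra is projection compressible.

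For the substantive direction (i)$\Rightarrow$(iii), the cases $n=2$ and $n=3$ reduce to direct inspection (every element of $\mm_2$ has a minimal polynomial of degree at most two, so $T-\lambda I$ has rank at most one for a suitable $\lambda$; the case $n=3$ uses \cite[Theorem 6.0.1]{CMR1} with an abridged version of the argument below). Thus assume $n\geq 4$ and apply Theorem~\ref{main result} to the unital commutative algebra $Alg(T,I)$: either $Alg(T,I)$ is the unitization of an $\mc{LR}$-algebra, or it is transpose similar to one of the unital algebras $\widetilde{\mc A}$ from Example~\ref{exmp:families of compressible algebras}. Since commutativity is a similarity invariant, the latter case forces $\widetilde{\mc A}$ itself to be commutative; however, in each of Examples~\ref{exmp:families of compressible algebras:1}, \ref{exmp:families of compressible algebras:2}, and \ref{exmp:families of compressible algebras:3}, the commutator of a suitable diagonal projection with a generic off-diagonal element (for instance $A=Q_1$ and $B=(Q_1+Q_2)X(Q_2+Q_3)$ in Example~\ref{exmp:families of compressible algebras:1}, for which $AB=Q_1X(Q_2+Q_3)$ while $BA=0$) vanishes for every $X$ only if some $Q_i$ degenerates to $0$. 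Iterating this reduction, together with analogous computations for the transposes of these examples, collapses $\widetilde{\mc A}$ to $\cc I$ whenever $n\geq 4$. Consequently $Alg(T,I)=\cc I$ in this subcase, and $T\in\cc I$ trivially satisfies (iii).

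In the remaining case $Alg(T,I)=\cc I+P\mm_nQ$, write $T=\lambda I+M$ with $M\in P\mm_nQ$. Since each power $M^k$ for $k\geq 1$ lies in $P\mm_nQ$ and $Alg(T,I)=\cc I+\mathrm{span}\{M^k:k\geq 1\}$, we must have $\mathrm{span}\{M^k:k\geq 1\}=P\mm_nQ$, and in particular $P\mm_nQ$ is commutative. Representing rank-one elements as $pq^*$ with $p\in\ran P$ and $q\in\ran Q$, the product formula $(pq^*)(rs^*)=\langle r,q\rangle\,ps^*$ reduces the commutativity condition to
\[
\langle r,q\rangle\,ps^*=\langle p,s\rangle\,rq^*
\]
for all $p,r\in\ran P$ and $q,s\in\ran Q$. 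Choosing $r,q$ with $\langle r,q\rangle\neq 0$ (possible whenever $QP\neq 0$) and then $p,s$ linearly independent from $r,q$ respectively (possible whenever $\rank P\geq 2$ and $\rank Q\geq 2$) produces a forbidden equality between linearly independent rank-ones; thus either $QP=0$ or $\min(\rank P,\rank Q)\leq 1$. When $QP=0$, every product in $P\mm_nQ$ vanishes, so $M^2=0$ and $\dim\cc[M]\leq 2$; comparing with $\dim(\cc I+P\mm_nQ)=1+\rank P\cdot\rank Q$ forces $\rank P\cdot\rank Q\leq 1$. When $\min(\rank P,\rank Q)\leq 1$, $M$ is already rank one with $M^2\in\cc M$, and the same dimension count again forces $\rank P\cdot\rank Q\leq 1$. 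Either way $M$ has rank at most one, so $T=\lambda I+M$ fits (iii). The principal obstacle lies here: extracting (iii) from the commutativity of $P\mm_nQ$ requires careful dimension matching against $\dim\cc[M]$ across the two subcases determined by whether $QP$ vanishes.
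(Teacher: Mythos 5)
Your argument is correct in substance, but the main implication (i)$\Rightarrow$(iii) follows a genuinely different route from the paper's. The paper does not invoke the classification Theorem~\ref{main result} here at all: it places $T$ in Jordan canonical form and eliminates, one case at a time, a Jordan block of size at least $3$, two blocks of size $2$, a size-$2$ block coexisting with a second eigenvalue, and three or more distinct eigenvalues, each time by exhibiting a principal compression that is a known non-compressible algebra (the $3\times 3$ obstructions from \cite{CMR1} together with Theorem~\ref{At most one non-scalar corner theorem}). That argument is uniform in $n\geq 2$ and comparatively lightweight. You instead feed the commutativity of $Alg(T,I)$ into Theorem~\ref{main result}: the families of Example~\ref{exmp:families of compressible algebras} are never commutative for $n\geq 4$ unless they degenerate to $\cc I$, and in the surviving case $\cc I+P\mm_nQ$ you extract $\rank(P)\cdot\rank(Q)\leq 1$ from commutativity of $P\mm_nQ$ plus the bound $\dim\cc[M]\leq 2$. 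This deduction is sound (your dichotomy between $QP=0$ and $\min(\rank(P),\rank(Q))\leq 1$ checks out, and $\cc I\cap P\mm_nQ=\{0\}$ because $P=Q=I$ would make $Alg(T,I)=\mm_n$ noncommutative), and it has the appeal of showing that (iii) is forced by commutativity alone once the classification is in hand. What it costs you is precisely what the paper's route avoids: Theorem~\ref{main result} only covers $n\geq 4$, so $n=2$ and $n=3$ need separate treatment. Your $n=2$ remark is fine, but the $n=3$ case is only waved at via \cite{CMR1}; that classification contains extra families with no higher-dimensional analogues, so the commutativity check there is a real, if routine, piece of work you have not carried out. Likewise the ``iterating this reduction'' collapse of the three example families to $\cc I$ is asserted rather than verified, though each instance is a one-line commutator computation. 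Neither omission is a flaw in the method, but both would need to be written out for a complete proof.
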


	\begin{proof}
	
		It is clear that (ii) implies (i). To see that (i) implies (iii), assume that $Alg(T,I)$ is compressible. It follows that $Alg(S^{-1}TS,I)=S^{-1}Alg(T,I)S$ is compressible for all invertible $S\in\mm_n$; hence we may assume that $T$ is in Jordan canonical form with respect to the standard basis $\left\{e_1,e_2,\ldots, e_n\right\}$ for $\cc^n$. 
		
		If $T$ has a Jordan block of size at least $3$, then $Alg(T,I)$ admits a principal compression of the form 
		$$\left\{\begin{bmatrix}
		x & y & z\\
		 0 & x & y\\
		 0 & 0 & x			
		\end{bmatrix}:x,y,z\in\cc\right\}.\smallskip $$ Since this algebra was shown to lack the compression property in \cite[Theorem~5.2.4]{CMR1}, it must be the case that each Jordan block of $T$ has size at most $2$. Note as well that if two or more Jordan blocks of size $2$ were present, then $Alg(T,I)$ would lack the compression property by Theorem~\ref{At most one non-scalar corner theorem}. Consequently, $T$ has at most one Jordan block of size $2$, and the remaining blocks have size $1$.
		
	If a Jordan block of size $2$ occurs, then $T$ cannot have two or more distinct eigenvalues.  Indeed, if $T$ had at least two distinct eigenvalues, then $Alg(T,I)$ would admit a principal compression that is unitarily equivalent to 
		$$\left\{\begin{bmatrix}
		x & y & 0\\
		0 & x & 0\\
		0 & 0 & z
		\end{bmatrix}: x,y,z\in\cc\right\}.$$
		By \cite[Theorem~5.2.2]{CMR1}, this algebra is not compressible---a contradiction. Thus, $T$ must be unitarily equivalent to $e_1\otimes e_2^*+\alpha I$ for some $\alpha\in\cc$. We conclude that $T=\alpha I+R$ for some $R$ in $\mm_n$ of rank $1$. 	

	Suppose now that every Jordan block of $T$ is $1\times 1$, so $T$ is diagonal. If $T$ had at least three distinct eigenvalues, then the algebra $\mc{D}$ of all $3\times 3$ diagonal matrices could be obtained as a principal compression of $Alg(T,I)$. Since  no algebra similar to $\mc{D}$ is projection compressible by \cite[Theorem~5.2.6]{CMR1}, this is not possible. Therefore, $T$ has at most two distinct eigenvalues. By Theorem~\ref{At most one non-scalar corner theorem}, one of the eigenvalues must have multiplicity $1$. We deduce that either $T$ has exactly one eigenvalue, and hence is a multiple of the identity; or $T$ has exactly two eigenvalues, and hence is a rank-one perturbation of a multiple of the identity. Thus, (iii) holds in this case as well.
	
	Finally, we will show that (iii) implies (ii). Suppose that $T\in\mathrm{span}\{I,R\}$ for some rank-one operator $R\in\mm_n$. That is, $T=\alpha I+\beta R$ for some $\alpha, \beta\in\cc$. If $\beta=0$, then $Alg(T,I)=\cc I$. Otherwise, $\beta R$ has rank $1$, and hence $Alg(T,I)=Alg(\beta R)+\cc I$ is the unitization of an $\mc{LR}$-algebra by \cite[Proposition~2.0.12]{CMR1}.
	\end{proof}
	
	It is interesting to note that in the $3$-dimensional case, the matrices of the form $\alpha I+\beta R$ for some $\alpha,\beta\in \cc$ and $R\in\mm_3$ of rank one are exactly those with two or more Jordan blocks corresponding to a common eigenvalue. Such matrices are said to be \textit{derogatory} \cite[Definition~1.4.4]{HornJohnson}. One may therefore view Corollary~\ref{algebra generated by T and I is compressible} as a higher-dimensional analogue of \cite[Corollary~5.1.3]{CMR1}.
	
	Throughout this exposition we have devoted our attention almost exclusively to unital subalgebras of $\mm_n$. Of course, it is reasonable to ask which non-unital algebras admit the projection or idempotent compression properties. In particular, it would be interesting to know whether or not the equivalence of these notions proven above in the unital case extends to non-unital algebras as well. 
	
By \cite[Proposition 2.0.6]{CMR1}, if a subalgebra $\mc{A}$ of $\mm_n$ admits the projection (resp. idempotent) compression property, then so too does its unitization. As a result, Theorem~\ref{main result} offers considerable insight into the non-unital projection (resp. idempotent) compressible algebras that exist in $\mm_n$. Specifically, this result indicates that if $\mc{A}$ is a projection compressible subalgebra of $\mm_n$, then $\widetilde{A}=\mc{A}+\cc I$ is the unitization of an $\mc{LR}$-algebra, or is transpose similar to one of the unital algebras from Example~\ref{exmp:families of compressible algebras}. Using this information, one can quickly obtain a non-unital analogue of Corollary~\ref{algebra generated by T and I is compressible}.

	\begin{cor}
	
		Let $n\geq 3$ be an integer, and let $T\in\mm_n$. The following are equivalent:
			\begin{itemize}
				\item[(i)]$Alg(T)$ is projection compressible;
				
				\item[(ii)]$Alg(T)$ is idempotent compressible;
				
				\item[(iii)]$Alg(T)$ is an $\mc{LR}$-algebra, or the unitization thereof;
				
				\item[(iv)]$T\in span\{I,R\}$ for some $R\in\mm_n$ of rank $1$, and $0$ does not occur as an eigenvalue of $T$ with algebraic multiplicity $1$.
			
			\end{itemize}
	
	\end{cor}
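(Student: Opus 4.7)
The plan is to establish the chain $(\mathrm{ii}) \Rightarrow (\mathrm{i}) \Rightarrow (\mathrm{iv}) \Rightarrow (\mathrm{iii}) \Rightarrow (\mathrm{ii})$. The implication $(\mathrm{ii}) \Rightarrow (\mathrm{i})$ is immediate, since every projection is an idempotent. For $(\mathrm{iii}) \Rightarrow (\mathrm{ii})$, I combine \cite[Corollary~2.0.9]{CMR1}, which asserts that $\mc{LR}$-algebras are idempotent compressible, with \cite[Proposition~2.0.4]{CMR1}, which states that the idempotent compression property is preserved by unitization.

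For $(\mathrm{iv}) \Rightarrow (\mathrm{iii})$, I plan a direct case analysis. Write $T = \alpha I + \beta R$ with $R$ of rank one; after rescaling I may assume that $R$ is either nilpotent ($R^2 = 0$) or idempotent ($R^2 = R$). In either case, expanding $T^k$ by the binomial theorem and collecting terms shows that $Alg(T)$ is a subspace of $\mathrm{span}\{I, R\}$ of dimension at most two. A short case analysis on the vanishing of $\alpha$, $\beta$, and $\alpha + \beta$ identifies $Alg(T)$ as one of $\{0\}$, $\cc I$, $\cc R$, $\cc I + \cc R$, or $\cc(I - R)$. The first four are all $\mc{LR}$-algebras or their unitizations. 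The fifth, $\cc(I - R)$ with $R^2 = R$ of rank one, arises precisely when $T = \alpha(I - R)$ with $\alpha \neq 0$; but then $0$ is a simple eigenvalue of $T$, excluded by $(\mathrm{iv})$.

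The main step is $(\mathrm{i}) \Rightarrow (\mathrm{iv})$. Assume $Alg(T)$ is projection compressible. The unitization $Alg(T) + \cc I$ equals $Alg(T, I)$, and is projection compressible by \cite[Proposition~2.0.4]{CMR1}; Corollary~\ref{algebra generated by T and I is compressible} then yields a rank-one $R \in \mm_n$ with $T \in \mathrm{span}\{I, R\}$. It remains to show that $0$ is not a simple eigenvalue of $T$. If it were, then arguing as in the case analysis above, one could, after rescaling $R$ to be a rank-one idempotent, write $T = \alpha(I - R)$ with $\alpha \neq 0$, so that $Alg(T) = \cc(I - R)$.

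To complete the contradiction, I plan to show that $\cc(I - R)$ is not projection compressible whenever $R$ is a rank-one idempotent in $\mm_n$ and $n \geq 3$. Write $R = pq^*$ with $q^*p = 1$, and select a unit vector $x \in \cc^n$ collinear with neither $p$ nor $q$, for which $\tau := 1 - \langle x, q\rangle\langle p, x\rangle$ avoids $\{0, 1\}$; such an $x$ exists when $n \geq 3$ by dimension counting on the unit sphere. Setting $P := I - xx^*$, a direct computation gives $PTP = \alpha P - \alpha(Pp)(Pq)^*$, and the rank-one operator $(Pp)(Pq)^*$ has trace $\tau$. Consequently $PTP$ has three distinct eigenvalues $\alpha$, $\alpha(1-\tau)$, and $0$, with multiplicities $n-2$, $1$, and $1$, respectively. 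Hence $PTP$ is not a scalar multiple of an idempotent, so $(PTP)^2 \notin \cc\, PTP$, contradicting projection compressibility. The principal obstacle is precisely this spectral analysis: since $R$ need only be an idempotent rather than an orthogonal projection, $T$ cannot in general be diagonalized by a unitary similarity, and the argument must proceed directly in terms of the biorthogonal pair $(p, q)$.
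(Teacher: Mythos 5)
Your proposal is correct, and it follows the same overall chain as the paper --- the reduction of (i)$\Rightarrow$(iv) to Corollary~\ref{algebra generated by T and I is compressible} plus the exclusion of a simple zero eigenvalue is exactly the paper's skeleton --- but the two hard steps are executed quite differently. For (i)$\Rightarrow$(iv), the paper puts $\beta R$ in upper triangular form $\gamma_1 e_1\otimes e_1^*+\gamma_2e_1\otimes e_2^*$, compresses to the span of $\{e_1,e_2,e_3\}$, and then exhibits one explicit rank-two projection $\tfrac12P^\prime$ for which $(P^\prime TP^\prime)^2$ violates a linear identity satisfied by $\cc P^\prime TP^\prime$; you instead take the rank-$(n-1)$ projection $P=I-xx^*$ for a generic unit vector $x$ and argue spectrally that $PTP$ acquires three distinct eigenvalues, so the one-dimensional space $\cc PTP$ cannot be multiplicatively closed. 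Your version is arguably more conceptual (it makes clear \emph{why} the corner fails to be an algebra) and is coordinate-free in the biorthogonal pair $(p,q)$, at the cost of a small genericity argument for the existence of $x$ with $\tau\notin\{0,1\}$; the paper's version is a two-line verification once the matrix is written down. For (iv)$\Rightarrow$(iii), the paper avoids your five-way case analysis by observing directly that when $\alpha\neq 0$ and $\alpha+\gamma_1\neq 0$ one has $\bigl(\tfrac1\alpha+\tfrac1{\alpha+\gamma_1}\bigr)T-\tfrac1{\alpha(\alpha+\gamma_1)}T^2=I$, so $Alg(T)=Alg(T,I)$ and Corollary~\ref{algebra generated by T and I is compressible} applies; your enumeration of $Alg(T)$ as one of $\{0\}$, $\cc I$, $\cc R$, $\cc I+\cc R$, $\cc(I-R)$ reaches the same conclusion and has the minor side benefit of exhibiting the excluded algebra $\cc(I-R)$ explicitly, which is the same object your spectral argument then kills. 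Both routes are sound; no gaps.
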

	
	\begin{proof}
	
		It is clear that (iii) implies (ii), and (ii) implies (i). 
		
		To see that (i) implies (iv), note that if $Alg(T)$ is projection compressible, then so too is $Alg(T,I)$. By Corollary~\ref{algebra generated by T and I is compressible}, there is a rank-one operator $R\in\mm_n$ such that $T\in\mathrm{span}\{I,R\}$. For the final claim, write $T=\alpha I+\beta R$ for some $\alpha,\beta\in\cc$, and suppose to the contrary that $\lambda=0$ is an eigenvalue of $T$ with algebraic multiplicity~$1$. Since $\rank(R)=1$, there is an orthonormal basis $\left\{e_1,e_2,\ldots, e_n\right\}$ for $\cc^n$ with respect to which $$\beta R=\gamma_{1}e_1\otimes e_1^*+\gamma_{2}e_1\otimes e_2^*\smallskip$$ for some constants $\gamma_1,\gamma_2\in\cc$.  Thus, when expressed as a matrix with respect to this basis, $T$ is upper triangular with diagonal entries $\alpha+\gamma_1$ with multiplicity~$1$, and $\alpha$ with multiplicity~$n-1$. It must therefore be the case that $\alpha+\gamma_{1}=0$ and $\alpha\neq 0$. 
		

		Let $P$ denote the orthogonal projection onto $\mathrm{span}\left\{e_1,e_2,e_3\right\}$ and define $T^\prime\coloneqq PTP$. With respect to the ordered basis $\mc{B}=\left\{e_1,e_2,e_3\right\}$ for $\ran(P)$,
		$$T^\prime=\begin{bmatrix}
		0 & \gamma_{2} & 0\\
		0 & \alpha & 0\\
		0 & 0 & \alpha
		\end{bmatrix}.$$
Thus, since $Alg(T)=\cc T$ is projection compressible, $P^\prime Alg(T)P^\prime=\cc P^\prime T^\prime P^\prime$ is an algebra for all projections $P^\prime\leq P$. Consider the matrix $$P^\prime =\begin{bmatrix}1 & 0 & 1\\
		0 & 2 & 0\\
		1 & 0 & 1\end{bmatrix},$$
		written with respect to the basis $\mc{B}$. It is easy to see that $\frac{1}{2}P^\prime $ is a subprojection of $P$. Moreover, it is straightforward to show that $\langle B e_2,e_2\rangle=4\langle B e_1,e_1\rangle$ for all $B\in P^\prime Alg(T)P^\prime$. One may verify, however, that  
		$$\langle (P^\prime TP^\prime )^2e_2,e_2\rangle-4\langle (P^\prime TP^\prime )^2e_1,e_1\rangle=8\alpha^2\neq 0,$$
		and thus $(P^\prime TP^\prime )^2\notin P^\prime Alg(T)P^\prime$. This is clearly a contradiction.
		

	It remains to show that (iv) implies (iii). To this end, let $T$ and $R$ be as in (iv), and write $T=\alpha I+\beta R$ for some $\alpha,\beta\in\cc$. Let $\left\{e_1,e_2,\ldots, e_n\right\}$ be an orthonormal basis for $\cc^n$ with respect to which $$\beta R=\gamma_{1}e_1\otimes e_1^*+\gamma_{2}e_1\otimes e_2^*\smallskip$$ for some $\gamma_1,\gamma_2\in\cc$. 
	
	First suppose that $\alpha=0$, so $Alg(T)=Alg(\beta R)$. If $\beta=0$ then this algebra is trivial. Otherwise, $Alg(T)$ is an $\mc{LR}$-algebra by \cite[Proposition 2.0.12]{CMR1}. If instead $\alpha\neq 0$, then our assumptions on $T$ imply that $\alpha+\gamma_1\neq 0$. Consequently, 
	$$I=\left(\frac{1}{\alpha}+\frac{1}{\alpha+\gamma_{1}}\right)T-\frac{1}{\alpha(\alpha+\gamma_{1})}T^2\in Alg(T).$$
	It follows that $Alg(T)=Alg(T,I)$, so $Alg(T)$ is the unitization of an $\mc{LR}$-algebra by Corollary~\ref{algebra generated by T and I is compressible}.
%
	\end{proof}		
	
	The notions of projection compressibility and idempotent compressibility can also be naturally extended to algebras of bounded linear operators acting on a Hilbert space $\mc{H}$ of arbitrary dimension. It would therefore be interesting to obtain analogues of the above results that apply in this setting. 
	
	One approach to understanding the structure of a projection (resp. idempotent) compressible operator algebra $\mc{A}$ would be to apply Theorem~\ref{main result} to the unital compressions $P\widetilde{A}P$, where $P$ is a projection (resp. idempotent) of finite rank. This technique may have its limts, however, as there could exist operator algebras $\mc{A}$ that lack the projection compression property, yet such that $P\mc{A}P$ is an algebra for all finite-rank projections $P$. With this in mind, the most viable avenue for understanding the compression properties in this setting may be to first obtain an intrinsic explanation as to why these notions coincide for unital subalgebras of $\mm_n$.\\

\section*{Acknowledgements}  The author would like to thank Laurent Marcoux and Heydar Radjavi for their insight and advice over the course of this research.\\

	\bibliography{ZCramerCompressibility2Bib}
	
	\bibliographystyle{plain}
	
	\medskip
	
	\Addresses
	
	\end{document}